\documentclass[11pt]{amsart}


\usepackage{
    amsmath,
    amsfonts,
    amssymb,
    amsthm,
    amscd,
    booktabs,
    comment,
    enumitem,
    etoolbox,
    gensymb,    
    mathtools,
    mathdots,
    stmaryrd,
}
\usepackage[usenames,dvipsnames]{xcolor}
\usepackage[all]{xy}


\usepackage[T1]{fontenc}
\usepackage{bbm}                     
\usepackage[colorlinks=true, linkcolor=blue, citecolor=blue, urlcolor=blue, breaklinks=true]{hyperref}


\DeclareFontFamily{OT1}{pzc}{}
\DeclareFontShape{OT1}{pzc}{m}{it}{<-> s * [1.10] pzcmi7t}{}
\DeclareMathAlphabet{\mathpzc}{OT1}{pzc}{m}{it}


\leftmargin=0in
\topmargin=0pt
\headheight=0pt
\oddsidemargin=0in
\evensidemargin=0in
\textheight=8.75in
\textwidth=6.5in
\parindent=0.5cm
\headsep=0.25in
\widowpenalty10000
\clubpenalty10000


\usepackage[capitalize]{cleveref}   

\crefname{assumption}{Assumption}{Assumptions}
\crefname{defin}{Definition}{Definitions}
\crefname{eg}{Example}{Examples}
\crefname{egs}{Example}{Examples}
\crefname{lem}{Lemma}{Lemmas}
\crefname{theo}{Theorem}{Theorems}
\crefname{equation}{}{}
\crefname{enumi}{}{}
\crefname{rem}{Remark}{Remarks}


\newcommand\C{\mathbb{C}}
\newcommand\N{\mathbb{N}}

\newcommand\Q{\mathbb{Q}}
\newcommand\R{\mathbb{R}}
\newcommand\Z{\mathbb{Z}}
\newcommand\kk{\Bbbk}
\newcommand\one{\mathbbm{1}}

\newcommand\bA{\mathbf{A}}
\newcommand\bB{\mathbf{B}}
\newcommand\bF{\mathbf{F}}
\newcommand\bFp{\mathbf{F}^\Tcat}

\newcommand\fg{\mathfrak{g}}
\newcommand\gl{\mathfrak{gl}}           

\newcommand\md{\textup{-mod}}

\newcommand\sC{\mathsf{C}}              
\newcommand\sD{\mathsf{D}}              
\newcommand\sR{\mathsf{R}}              
\newcommand\sT{\mathsf{T}}
\newcommand\sV{\mathsf{V}}              
\newcommand\sv{\mathsf{v}}              

\newcommand\qdim{\dim_q}

\newcommand\qbinom[2]{\begin{bmatrix} #1 \\ #2 \end{bmatrix}}   


\newcommand\AFcat{\mathpzc{AF}}         
\newcommand\cC{\mathpzc{C}}
\newcommand\cD{\mathpzc{D}}
\newcommand\cN{\mathpzc{N}}
\newcommand\cEnd{\mathpzc{End}}
\newcommand\Fcat{\mathpzc{F}}           
\newcommand\Mcat{\mathpzc{M}}           
\newcommand\Tcat{\mathpzc{T}}           

\newcommand\cI{\mathcal{I}}             

\newcommand\go{{\mathsf{I}}}            


\DeclareMathOperator{\End}{End}
\DeclareMathOperator{\ev}{ev}
\DeclareMathOperator{\flip}{flip}
\DeclareMathOperator{\Hom}{Hom}
\DeclareMathOperator{\Id}{Id}      


\DeclareMathOperator{\Kar}{Kar}

\DeclareMathOperator{\Mor}{Mor}
\DeclareMathOperator{\Ob}{Ob}
\DeclareMathOperator{\rank}{rank}

\DeclareMathOperator{\Rot}{Rot}

\DeclareMathOperator{\Tr}{Tr}
\DeclareMathOperator{\tr}{tr}


\usepackage{tikz}
\usetikzlibrary{cd}
\usetikzlibrary{arrows.meta}
\usetikzlibrary{decorations.markings}
\usetikzlibrary{calc}


\tikzset{anchorbase/.style={>=To,baseline={([yshift=-0.5ex]current bounding box.center)}}}
\tikzset{ 
    centerzero/.style={>=To,baseline={([yshift=-0.5ex](#1))}},
    centerzero/.default={0,0}
}
\tikzset{wipe/.style={white,line width=3pt}}


\newcommand\braidup{to[out=up,in=down]}
\newcommand\braiddown{to[out=down,in=up]}


\newcommand\posdot[1]{
    \draw[fill=white, draw=black] (#1) circle (0.13);
    \filldraw[black] (#1) -- +(0.09192,0.09192) arc(45:135:0.13) -- cycle;
    \filldraw[black] (#1) -- +(0.09192,-0.09192) arc(-45:-135:0.13) -- cycle
}
\newcommand\negdot[1]{
    \draw[fill=white, draw=black] (#1) circle (0.13);
    \filldraw[black] (#1) -- (0.09192,0.09192) arc(45:-45:0.13) -- cycle;
    \filldraw[black] (#1) -- (-0.09192,0.09192) arc(135:225:0.13) -- cycle
}
\newcommand\opendot[1]{
    \filldraw[fill=white, draw=black] (#1) circle (0.05)
}

\newcommand{\posaff}[1]{
    \filldraw[fill=white,draw=black] (#1)+(0.09,0) arc(0:360:0.09);
    \draw (#1)++(-0.09,0) to ++(0.18,0);
    \draw (#1)++(0,-0.09) -- ++(0,0.18)
}
\newcommand{\negaff}[1]{
    \filldraw[fill=white,draw=black] (#1)+(0.09,0) arc(0:360:0.09);
    \draw (#1)++(-0.09,0) to ++(0.18,0)
}
\newcommand\multdot[3]{
    \filldraw[fill=white, draw=black] (#1) circle (1.5pt) node[anchor=#2] {$\scriptstyle{#3}$}
}


\newcommand\bub[1]{
    \draw (#1)++(0,0.2) arc(90:-270:0.2)
}


\newcommand\bubble{
    \begin{tikzpicture}[centerzero]
        \bub{0,0};
    \end{tikzpicture}
}
\newcommand\idstrand[1][a]{
    \begin{tikzpicture}[centerzero]
        \draw (0,-0.2) -- (0,0.2);
    \end{tikzpicture}
}
\newcommand\crossmor{
    \begin{tikzpicture}[centerzero]
        \draw (0.2,-0.2) -- (-0.2,0.2);
        \draw (-0.2,-0.2) -- (0.2,0.2);
    \end{tikzpicture}
}
\newcommand\poscross{
    \begin{tikzpicture}[centerzero]
        \draw (0.2,-0.2) -- (-0.2,0.2);
        \draw[wipe] (-0.2,-0.2) -- (0.2,0.2);
        \draw (-0.2,-0.2) -- (0.2,0.2);
    \end{tikzpicture}
}
\newcommand\negcross{
    \begin{tikzpicture}[centerzero]
        \draw (-0.2,-0.2) -- (0.2,0.2);
        \draw[wipe] (0.2,-0.2) -- (-0.2,0.2);
        \draw (0.2,-0.2) -- (-0.2,0.2);
    \end{tikzpicture}
}
\newcommand{\cupmor}{
    \begin{tikzpicture}[anchorbase]
        \draw (-0.15,0.15) -- (-0.15,0) arc(180:360:0.15) -- (0.15,0.15);
    \end{tikzpicture}
}
\newcommand{\capmor}{
    \begin{tikzpicture}[anchorbase]
        \draw (-0.15,-0.15) -- (-0.15,0) arc(180:0:0.15) -- (0.15,-0.15);
    \end{tikzpicture}
}
\newcommand\mergemor{
    \begin{tikzpicture}[centerzero]
      \draw (-0.2,-0.2) to (0,0);
      \draw (0.2,-0.2) to (0,0);
      \draw (0,0) to (0,0.2);
    \end{tikzpicture}
}
\newcommand\splitmor{
    \begin{tikzpicture}[centerzero]
      \draw (-0.2,0.2) to (0,0);
      \draw (0.2,0.2) to (0,0);
      \draw (0,0) to (0,-0.2);
    \end{tikzpicture}
}
\newcommand\lolly{
    \begin{tikzpicture}[centerzero]
        \draw (0,-0.2) -- (0,0) to[out=40,in=down] (0.15,0.15) arc(0:180:0.15) to[out=down,in=130] (0,0);
    \end{tikzpicture}
}
\newcommand\lollydrop{
    \begin{tikzpicture}[centerzero]
        \draw (0,0.2) -- (0,0) to[out=-40,in=up] (0.15,-0.15) arc(0:-180:0.15) to[out=up,in=-130] (0,0);
    \end{tikzpicture}
}
\newcommand\posdotcross{
    \begin{tikzpicture}[centerzero]
        \draw (0.2,-0.2) -- (-0.2,0.2);
        \draw (-0.2,-0.2) -- (0.2,0.2);
        \posdot{0,0};
    \end{tikzpicture}
}
\newcommand\negdotcross{
    \begin{tikzpicture}[centerzero]
        \draw (0.2,-0.2) -- (-0.2,0.2);
        \draw (-0.2,-0.2) -- (0.2,0.2);
        \negdot{0,0};
    \end{tikzpicture}
}

\newcommand\triform{
    \begin{tikzpicture}[centerzero]
        \draw (-0.3,-0.2) -- (0,0.2);
        \draw (0,-0.2) -- (0,0.2);
        \draw (0.3,-0.2) -- (0,0.2);
    \end{tikzpicture}
}
\newcommand\explode{
    \begin{tikzpicture}[centerzero]
        \draw (-0.3,0.2) -- (0,-0.2);
        \draw (0,0.2) -- (0,-0.2);
        \draw (0.3,0.2) -- (0,-0.2);
    \end{tikzpicture}
}
\newcommand\trimor{
    \begin{tikzpicture}[anchorbase]
        \draw (0,0.15) -- (0.13,-0.075) -- (-0.13,-0.075) -- cycle;
        \draw (0,0.15) -- (0,0.3);
        \draw (0.13,-0.075) -- (0.24,-0.185);
        \draw (-0.13,-0.075) -- (-0.24,-0.185);
    \end{tikzpicture}
}
\newcommand\posstrand{
    \begin{tikzpicture}[centerzero]
        \draw (0,-0.2) -- (0,0.2);
        \posaff{0,0};
    \end{tikzpicture}
}
\newcommand\negstrand{
    \begin{tikzpicture}[centerzero]
        \draw (0,-0.2) -- (0,0.2);
        \negaff{0,0};
    \end{tikzpicture}
}


\newcommand\sqmor{
    \begin{tikzpicture}[centerzero]
        \draw (-0.15,-0.15) rectangle (0.15,0.15);
        \draw (-0.3,-0.3) -- (-0.15,-0.15);
        \draw (0.3,-0.3) -- (0.15,-0.15);
        \draw (-0.3,0.3) -- (-0.15,0.15);
        \draw (0.3,0.3) -- (0.15,0.15);
    \end{tikzpicture}
}
\newcommand\pentmor{
    \begin{tikzpicture}[centerzero]
        \draw (0,0.25) -- (-0.24,0.08) -- (-0.15,-0.20) -- (0.15,-0.20) -- (0.24,0.08) -- cycle;
        \draw (0,0.25) -- (0,0.4);
        \draw (-0.24,0.08) -- (-0.35,0.4);
        \draw (0.24,0.08) -- (0.35,0.4);
        \draw (-0.15,-0.2) -- (-0.35,-0.4);
        \draw (0.15,-0.2) -- (0.35,-0.4);
    \end{tikzpicture}
}
\newcommand\Hmor{
    \begin{tikzpicture}[centerzero]
        \draw (-0.3,-0.2) -- (-0.1,0) -- (-0.3,0.2);
        \draw (-0.1,0) -- (0.1,0);
        \draw (0.3,-0.2) -- (0.1,0) -- (0.3,0.2);
    \end{tikzpicture}
}
\newcommand\Imor{
    \begin{tikzpicture}[centerzero]
        \draw (-0.2,-0.3) -- (0,-0.1) -- (0.2,-0.3);
        \draw (-0.2,0.3) -- (0,0.1) -- (0.2,0.3);
        \draw (0,-0.1) -- (0,0.1);
    \end{tikzpicture}
}
\newcommand\hourglass{
    \begin{tikzpicture}[centerzero]
        \draw (-0.15,-0.3) -- (-0.15,-0.25) arc(180:0:0.15) -- (0.15,-0.3);
        \draw (-0.15,0.3) -- (-0.15,0.25) arc(180:360:0.15) -- (0.15,0.3);
    \end{tikzpicture}
}
\newcommand\jail{
    \begin{tikzpicture}[centerzero]
        \draw (-0.15,-0.3) -- (-0.15,0.3);
        \draw (0.15,-0.3) -- (0.15,0.3);
    \end{tikzpicture}
}



\newtheorem{theo}{Theorem}[section]

\newtheorem{prop}[theo]{Proposition}
\newtheorem{lem}[theo]{Lemma}
\newtheorem{cor}[theo]{Corollary}
\newtheorem{conj}[theo]{Conjecture}

\theoremstyle{definition}
\newtheorem{assumption}[theo]{Assumption}
\newtheorem{defin}[theo]{Definition}
\newtheorem{rem}[theo]{Remark}

\numberwithin{equation}{section}
\allowdisplaybreaks

\setenumerate[1]{label=(\alph*)}          

\setcounter{tocdepth}{1}


\newtoggle{comments}
\newtoggle{details}
\newtoggle{detailsnote}


\iftoggle{comments}{%
    \usepackage[notref,notcite]{showkeys}   
    \newcommand{\acomments}[1]{
        \ \\
        {\color{red} \textbf{AS:} #1}
        \ \\
    }
    \newcommand{\bcomments}[1]{
        \ \\
        {\color{purple} \textbf{BWW:} #1}
        \ \\
    }
}{%
    \newcommand{\acomments}[1]{}
    \newcommand{\bcomments}[1]{}
}

\iftoggle{details}{%
    \newcommand{\details}[1]{
        \ \\
        {\color{OliveGreen} \textbf{Details:} #1 }
        \ \\
    }
}{%
    \newcommand{\details}[1]{}
}

\begin{document}

\title{Quantum diagrammatics for $F_4$}

\author{Alistair Savage}
\address[A.S.]{
  Department of Mathematics and Statistics \\
  University of Ottawa \\
  Ottawa, ON K1N 6N5, Canada
}
\urladdr{\href{https://alistairsavage.ca}{alistairsavage.ca}, \textrm{\textit{ORCiD}:} \href{https://orcid.org/0000-0002-2859-0239}{orcid.org/0000-0002-2859-0239}}
\email{alistair.savage@uottawa.ca}

\author{Bruce W.\ Westbury}
\address[B.W.]{}
\urladdr{\textrm{\textit{ORCiD}:} \href{https://orcid.org/0000-0002-3793-3360}{orcid.org/0000-0002-3793-3360}}
\email{brucewestbury@gmail.com}

\begin{abstract}
    We introduce a graphical calculus for the representation theory of the quantized enveloping algebra of type $F_4$.  We do this by giving a diagrammatic description of the category of invariant tensors on the 26-dimensional fundamental representation.
\end{abstract}

\subjclass[2020]{18M15, 18M30, 17B25, 17B37}

\keywords{Monoidal category, string diagram, quantized enveloping algebra, exceptional Lie algebra, $F_4$, link invariant}

\ifboolexpr{togl{comments} or togl{details}}{%
  {\color{magenta}DETAILS OR COMMENTS ON}
}{%
}

\maketitle
\thispagestyle{empty}

\tableofcontents

\section{Introduction}

One of the main objectives in quantum invariant theory is to describe the \emph{category of invariant tensors on $V$}, where $V$ is a finite-dimensional module for the quantized enveloping algebra $U_q(\fg)$ of a reductive Lie algebra $\fg$.  This is the full monoidal subcategory of $U_q(\fg)$-mod on $V$ and its dual.  (Throughout this paper we work with finite-dimensional modules of type $1$.)  The basic problem is to give a finite presentation of this monoidal category.  The insight that arose from the construction of the Reshetikhin--Turaev link invariants from the representation theory of $U_q(\fg)$ is that the most effective approach to this problem is to use string diagrams.

The general strategy is that one defines a monoidal category $\cC$ with one generating object (and its dual if $V$ is not self-dual) and finitely many generating morphisms, depicted by simple string diagrams, subject to a finite number of relations on these generating morphisms.  One then defines a full monoidal functor $\bF \colon \cC \to U_q(\fg)\md$ sending the generating object to $V$.  If $V$ has the property that every module is a summand of some tensor product of copies of $V$ and its dual (this is true, for example, when the corresponding representation on $V$ of the associated compact, simply-connected Lie group is faithful),
\details{
    If the corresponding representation of the compact, simply-connected Lie group on $V$ is faithful, then every finite-dimensional module is a summand of $V^k \otimes (V^*)^l$ for some $k,l \in \N$.  See, for example, Theorem III.4.4 of T.\ Br\"ocker and T.\ tom Dieck, \emph{Representations of compact Lie groups}, Graduate Texts in Mathematics 98, Springer-Verlag, New York, 1985.
}
then we have an induced full and essentially surjective functor $\Kar(\bF) \colon \Kar(\cC) \to U_q(\fg)\md$, where $\Kar(\cC)$ is the idempotent completion of the additive envelope of $\cC$.  In this way, we obtain a diagrammatic calculus for the representation theory of $U_q(\fg)$, where arbitrary modules correspond to idempotent endomorphisms in the additive envelope of $\cC$.  Ideally one would also like to have a \emph{complete} set of relations in $\cC$, so that the above functors are also faithful, and hence $\Kar(\bF)$ is an equivalence of categories.

The first example of the above construction is the Temperley--Lieb category, which is the category of invariant tensors for the vector representation of $U_q(\mathfrak{sl}_2)$.  In general type $A$, the diagrammatic calculus is perhaps best encoded in the category associated to the HOMFLYPT link invariant.  This category, originally formulated by Turaev \cite{Tur89}, is a quotient of the category of oriented framed tangles.  The module $V$ in this case is the quantum analogue of the natural $\gl_n$-module.  There is a natural full functor to $U_q(\gl_n)$-mod, and thus we obtain a graphical calculus for that category.  Iwahori--Hecke algebras (more generally, quantum walled Brauer algebras) occur as endomorphism algebras in the diagrammatic category.

In types $BCD$, the graphical calculus is given by the Kauffman category, also introduced in \cite{Tur89}, which is a quotient of the category of unoriented framed tangles.  Again, $V$ is the natural module, and there is a full functor to $U_q(\fg)$-mod, where $\fg$ is a simple Lie algebra of type $BCD$.  In this case, the endomorphism algebras are Birman--Murukami--Wenzl (BMW) algebras.

In \cite{Kup94,Kup96}, Kuperberg gave complete presentations for the categories of invariant tensors for any fundamental representation when $\fg$ is a simple Lie algebra of rank two.  This was then done for arbitrary finite type $A$ in \cite{CKM14} and for arbitrary finite type $C$ in \cite{BERT21}.  Other papers giving partial descriptions include \cite{Wen03} on the minuscule representations in type $E_6$ or $E_7$, \cite{Wes03a} on adjoint representations, and \cite{Wes08} on the spin representation in type $B_3$.

A simpler concept, which we will refer to as \emph{classical} invariant theory, is where we take the enveloping algebra $U(\fg)$ in the place of its quantized version.  This was initially studied without the use of diagrams in Weyl's influential book \cite{Wey39}.  Perhaps the earliest use of diagrams in this context is the use of Brauer algebras in \cite{Bra37} to formulate a type-$BCD$ analogue of Schur--Weyl duality.  In \cite{LZ15}, Lehrer and Zhang defined the Brauer category, where the Brauer algebras appear as endomorphism algebras, and described the kernel of the full functor from the diagrammatic category to the representation category.  Inspired by the language of Feynman diagrams in quantum field theory, invariant tensors for semisimple Lie algebras have also been computed diagrammatically by Cvitanonivi\'c in \cite{Cvi08}.

In type $F_4$, the category of classical invariant tensors for the 26-dimensional simple module, which is the smallest nontrivial simple module, was recently studied in \cite{GSZ21}.  There the authors give a presentation of a diagrammatic monoidal category $\Fcat$, together with a full functor to $U(\fg)$-mod, which is also essentially surjective after passing to $\Kar(\Fcat)$.  The goal of the current paper is to develop a \emph{quantum} version of these diagrammatics.

Given a commutative ring $\kk$, we first define a strict monoidal $\kk$-linear category $\Tcat$ depending on parameters $\delta \in \kk$ and $\phi,\beta,\tau \in \kk^\times$.  This category is generated by a single object $\go$ and five morphisms
\[
    \mergemor \colon \go \otimes \go \to \go,\quad
    \poscross,\ \negcross \colon \go \otimes \go \to \go \otimes \go,\quad
    \cupmor \colon \one \to \go \otimes \go,\quad
    \capmor \colon \go \otimes \go \to \one,
\]
where $\one$ is the unit object.  We impose certain natural relations which imply, in particular, that $\Tcat$ is braided and strict pivotal, and that $\go$ has categorical dimension $\delta$.  We then specialize the ground ring to $\kk = \Q(q)$, fix certain values of the parameters $\delta,\phi,\beta,\tau$, and define a quotient $\Fcat_q$ of $\Tcat$ by a skein relation and three additional relations expressing the diagrams
\[
    \begin{tikzpicture}[centerzero]
        \draw (0.4,-0.4) -- (-0.2,0.2);
        \draw[wipe] (-0.2,-0.2) -- (0.4,0.4);
        \draw (-0.2,-0.2) -- (0.4,0.4);
        \draw (-0.2,-0.4) -- (-0.2,0.4);
    \end{tikzpicture}
    \ ,\qquad
    \sqmor \ , \qquad \pentmor\
\]
in terms of simpler ones.  The category $\Fcat_q$ should be thought of as a quantum version of the category $\Fcat$ introduced in \cite{GSZ21} in the sense that setting $q=1$ recovers $\Fcat$ as defined there.  On the other hand, $\Fcat_q$ should also be thought of as a type $F_4$ analogue of the framed HOMFLYPT skein category (type $A$), the Kauffman category (types $BCD$), and Kuperberg's $G_2$ category \cite{Kup94}.

One of the main contributions of the current paper is the method for \emph{deducing} the additional relations that define $\Fcat_q$ as a quotient of $\Tcat$.  We show, in \cref{sec:2point,sec:specialize}, that these relations are forced by certain assumptions on morphism spaces, namely that
\[
    \jail\ ,\quad \hourglass\ ,\quad \Hmor\ ,\quad \Imor\ ,\quad \poscross + \negcross
\]
form a basis for the space of endomorphisms of $\go^{\otimes 2}$, that the morphism space $\go^{\otimes 3} \to \go^{\otimes 2}$ has dimension at most $15$, and that the morphisms
\[
    \begin{tikzpicture}[centerzero]
        \draw (-0.15,-0.3) -- (-0.15,-0.23) arc(180:0:0.15) -- (0.15,-0.3);
        \draw (-0.3,0.3) -- (0,0.08) -- (0.3,0.3);
        \draw (0,0.3) -- (0,0.08);
    \end{tikzpicture}
    \ ,\quad
    \begin{tikzpicture}[centerzero]
        \draw (-0.2,-0.3) -- (-0.2,0.3);
        \draw (0,0.3) -- (0.15,0) -- (0.3,0.3);
        \draw (0.15,0) -- (0.15,-0.3);
    \end{tikzpicture}
    \ ,\quad
    \begin{tikzpicture}[centerzero]
        \draw (0.2,-0.3) -- (0.2,0.3);
        \draw (0,0.3) -- (-0.15,0) -- (-0.3,0.3);
        \draw (-0.15,0) -- (-0.15,-0.3);
    \end{tikzpicture}
    \ ,\quad
    \begin{tikzpicture}[centerzero]
        \draw (-0.3,0.3) -- (-0.3,0.23) arc(180:360:0.15) -- (0,0.3);
        \draw (0.3,0.3) -- (0.15,0) -- (-0.2,-0.3);
        \draw (0.2,-0.3) -- (0.15,0);
    \end{tikzpicture}
    \ ,\quad
    \begin{tikzpicture}[centerzero]
        \draw (0.3,0.3) -- (0.3,0.23) arc(360:180:0.15) -- (0,0.3);
        \draw (-0.3,0.3) -- (-0.15,0) -- (0.2,-0.3);
        \draw (-0.2,-0.3) -- (-0.15,0);
    \end{tikzpicture}
    \ ,\quad
    \begin{tikzpicture}[centerzero]
        \draw (0,0.3) -- (0,-0.15) -- (-0.15,-0.3);
        \draw (0,-0.15) -- (0.15,-0.3);
        \draw[wipe] (-0.2,0.3) -- (-0.2,0.25) arc(180:360:0.2) -- (0.2,0.3);
        \draw (-0.2,0.3) -- (-0.2,0.25) arc(180:360:0.2) -- (0.2,0.3);
    \end{tikzpicture}
    \ ,\quad
    \begin{tikzpicture}[centerzero]
        \draw (-0.3,0.3) -- (0,0) -- (0.3,0.3);
        \draw (0,0) -- (-0.15,-0.3);
        \draw[wipe] (0,0.3) to[out=-45,in=70] (0.15,-0.3);
        \draw (0,0.3) to[out=-45,in=70] (0.15,-0.3);
    \end{tikzpicture}
    \ ,\quad
    \begin{tikzpicture}[centerzero]
        \draw (0.3,0.3) -- (0,0) -- (-0.3,0.3);
        \draw (0,0) -- (0.15,-0.3);
        \draw[wipe] (0,0.3) to[out=225,in=110] (-0.15,-0.3);
        \draw (0,0.3) to[out=225,in=110] (-0.15,-0.3);
    \end{tikzpicture}
    \ ,\quad
    \begin{tikzpicture}[centerzero]
        \draw (-0.3,0.3) -- (-0.15,0.15) -- (0,0.3);
        \draw (-0.15,0.15) -- (0.15,-0.3);
        \draw[wipe] (0.3,0.3) -- (-0.15,-0.3);
        \draw (0.3,0.3) -- (-0.15,-0.3);
    \end{tikzpicture}
    \ ,\quad
    \begin{tikzpicture}[centerzero]
        \draw (0.3,0.3) -- (0.15,0.15) -- (0,0.3);
        \draw (0.15,0.15) -- (-0.15,-0.3);
        \draw[wipe] (-0.3,0.3) -- (0.15,-0.3);
        \draw (-0.3,0.3) -- (0.15,-0.3);
    \end{tikzpicture}
\]
are linearly independent.  In addition to giving a method for deducing the relations, this approach makes the proof of our main result quite easy.  We show (\cref{lightning}) that there is a full monoidal functor
\[
    \bF_q \colon \Fcat_q \to \Mcat_q,
\]
where $\Mcat_q$ denotes the category of finite-dimensional $U_q(\fg)$-modules of type $1$, and $\fg$ is the simple Lie algebra of type $F_4$.  The generating object $\go$ is sent to the 26-dimensional simple $U_q(\fg)$-module $\sV_q$.  We first define this functor on $\Tcat$, where its existence follows from general representation theory considerations.  Then, since the above-mentioned assumptions are satisfied in $U_q(\fg)$-mod, it follows immediately that the functor factors through $\Fcat_q$.  We expect that this approach can also be used to develop quantum diagrammatics in other exceptional types.

In the classical setting, the 26-dimensional simple $U(\fg)$-module $V$ can be naturally identified with the traceless part of the Albert algebra which, over the complex numbers, is the unique exceptional Jordan algebra.  The generating morphisms of the diagrammatic category $\Fcat$ defined in \cite{GSZ21} are given in terms of the multiplication and trace on the Albert algebra, so that $\Fcat$ can be viewed as a diagrammatic calculus for this algebra.  The category $\Fcat_q$ of the current paper can then be thought of as a graphical calculus for a quantum analogue of the Albert algebra.  To the best of our knowledge, this quantum analogue has not appeared in the literature.

Since the category $\Mcat_q$ is idempotent complete, we have an induced functor
\[
    \Kar(\bF_q) \colon \Kar(\Fcat_q) \to \Mcat_q.
\]
This functor is essentially surjective, which implies that every finite-dimensional $U_q(\fg)$-module of type $1$ is the image of $\bF_q(e)$ for some idempotent endomorphism $e$ in $\Fcat_q$.  We illustrate this phenomenon in \cref{sec:idempotents} by giving an explicit decomposition of the identity morphism of $\go^{\otimes 2}$ as a sum of orthogonal idempotents corresponding to the decomposition of $\sV_q^{\otimes 2}$ as a sum of simple modules.

It follows from the fullness of $\bF_q$ that we have surjective algebra homomorphisms
\[
    \End_{\Fcat_q}(\go^{\otimes k}) \twoheadrightarrow \End_{U_q(\fg)}(\sV_q^{\otimes k}),\qquad k \in \N.
\]
In this sense, the endomorphism algebras in $\Fcat_q$ should be thought of as $F_4$ analogues of quantum walled Brauer algebras and BMW algebras.  In addition, the diagrammatic calculus developed in the current paper is a first step towards a set of tools for computing the type $F_4$ Reshetikhin--Turaev invariant.

It is natural to ask if the functor $\bF_q$ is faithful.  Unfortunately, we do not know the answer to this question.  A weaker assertion would be that the kernel of $\bF_q$ consists of negligible morphisms, in which case $\bF_q$ would induce an equivalence of categories between $\Mcat_q$ and the semisimplification of $\Fcat_q$, which is the quotient of $\Fcat_q$ by the tensor ideal of negligible morphisms.  This would follow if one can show that any closed diagram in $\Fcat_q$ can be reduced to a scalar multiple of the empty diagram; see \cref{acai}.  Answering these questions in an important direction for future research.

As a final application of the results of the current paper, we define, in \cref{sec:affine}, an affine version, $\AFcat_q$, of the category $\Fcat_q$.  The category $\AFcat_q$ corresponds to considering string diagrams on a cylinder, as opposed to in the plane.  It acts naturally on the category of $U_q(\fg)$-modules and gives a way to construct elements in the center of $U_q(\fg)$.  We stress that the definition of the affine category $\AFcat_q$, which follows a general affinization procedure developed in \cite{MS21}, \emph{requires} that we work in the quantum setting; it is not available to us using the non-quantum diagrammatic category developed in \cite{GSZ21}.  The affine category suggests several other directions of further research, including connections to annular Reshetikhin--Turaev invariants and an $F_4$ skein algebra of the torus (see \cite{MS17}).

\subsection*{SageMath computations}

Throughout the paper, many direct computations are performed using the open-source mathematics software system SageMath \cite{sagemath}.  For the reader interested in verifying these, all such computations are included in a SageMath notebook that can be found on the \href{https://arxiv.org/abs/2204.11976}{arXiv} as an ancillary file.

\subsection*{Acknowledgements}

The research of A.S.\ was supported by NSERC Discovery Grant RGPIN-2017-03854.

\section{The diagrammatic categories\label{sec:Tdef}}

In this section we introduce our diagrammatic categories of interest and deduce some relations that follow from the definitions.  We fix a commutative ring $\kk$.  All categories are $\kk$-linear and all algebras and tensor products are over $\kk$ unless otherwise specified.  We let $\one$ denote the unit object of a monoidal category.  For objects $X$ and $Y$ in a category $\cC$, we denote by $\cC(X,Y)$ the $\kk$-module of morphisms from $X$ to $Y$.

\begin{defin} \label{Tdef}
    Fix $\delta \in \kk$ and $\phi,\beta,\tau \in \kk^\times$.  Let $\Tcat$ be the strict $\kk$-linear monoidal category generated by the object $\go$ and generating morphisms
    \begin{equation} \label{lego}
        \mergemor \colon \go \otimes \go \to \go,\quad
        \poscross,\ \negcross \colon \go \otimes \go \to \go \otimes \go,\quad
        \cupmor \colon \one \to \go \otimes \go,\quad
        \capmor \colon \go \otimes \go \to \one,
    \end{equation}
    subject to the following relations:
    \begin{gather} \label{vortex}
        \begin{tikzpicture}[centerzero]
            \draw (-0.3,-0.4) -- (-0.3,0) arc(180:0:0.15) arc(180:360:0.15) -- (0.3,0.4);
        \end{tikzpicture}
        =
        \begin{tikzpicture}[centerzero]
            \draw (0,-0.4) -- (0,0.4);
        \end{tikzpicture}
        =
        \begin{tikzpicture}[centerzero]
            \draw (-0.3,0.4) -- (-0.3,0) arc(180:360:0.15) arc(180:0:0.15) -- (0.3,-0.4);
        \end{tikzpicture}
        \ ,\quad
        \splitmor
        :=
        \begin{tikzpicture}[anchorbase]
            \draw (-0.4,0.2) to[out=down,in=180] (-0.2,-0.2) to[out=0,in=225] (0,0);
            \draw (0,0) -- (0,0.2);
            \draw (0.3,-0.3) -- (0,0);
        \end{tikzpicture}
        =
        \begin{tikzpicture}[anchorbase]
            \draw (0.4,0.2) to[out=down,in=0] (0.2,-0.2) to[out=180,in=-45] (0,0);
            \draw (0,0) -- (0,0.2);
            \draw (-0.3,-0.3) -- (0,0);
        \end{tikzpicture}
        \ ,\quad
        \begin{tikzpicture}[centerzero]
            \draw (-0.2,-0.3) -- (-0.2,-0.1) arc(180:0:0.2) -- (0.2,-0.3);
            \draw[wipe] (-0.3,0.3) \braiddown (0,-0.3);
            \draw (-0.3,0.3) \braiddown (0,-0.3);
        \end{tikzpicture}
        =
        \begin{tikzpicture}[centerzero]
            \draw (-0.2,-0.3) -- (-0.2,-0.1) arc(180:0:0.2) -- (0.2,-0.3);
            \draw[wipe] (0.3,0.3) \braiddown (0,-0.3);
            \draw (0.3,0.3) \braiddown (0,-0.3);
        \end{tikzpicture}
        \ ,\quad
        \begin{tikzpicture}[centerzero]
            \draw (-0.3,0.3) \braiddown (0,-0.3);
            \draw[wipe] (-0.2,-0.3) -- (-0.2,-0.1) arc(180:0:0.2) -- (0.2,-0.3);
            \draw (-0.2,-0.3) -- (-0.2,-0.1) arc(180:0:0.2) -- (0.2,-0.3);
        \end{tikzpicture}
        =
        \begin{tikzpicture}[centerzero]
            \draw (0.3,0.3) \braiddown (0,-0.3);
            \draw[wipe] (-0.2,-0.3) -- (-0.2,-0.1) arc(180:0:0.2) -- (0.2,-0.3);
            \draw (-0.2,-0.3) -- (-0.2,-0.1) arc(180:0:0.2) -- (0.2,-0.3);
        \end{tikzpicture}
        \ ,
        \\ \label{venom}
        \begin{tikzpicture}[centerzero]
            \draw (0.2,-0.4) to[out=135,in=down] (-0.15,0) to[out=up,in=-135] (0.2,0.4);
            \draw[wipe] (-0.2,-0.4) to[out=45,in=down] (0.15,0) to[out=up,in=-45] (-0.2,0.4);
            \draw (-0.2,-0.4) to[out=45,in=down] (0.15,0) to[out=up,in=-45] (-0.2,0.4);
        \end{tikzpicture}
        =
        \begin{tikzpicture}[centerzero]
            \draw (-0.15,-0.4) -- (-0.15,0.4);
            \draw (0.15,-0.4) -- (0.15,0.4);
        \end{tikzpicture}
        =
        \begin{tikzpicture}[centerzero]
            \draw (-0.2,-0.4) to[out=45,in=down] (0.15,0) to[out=up,in=-45] (-0.2,0.4);
            \draw[wipe] (0.2,-0.4) to[out=135,in=down] (-0.15,0) to[out=up,in=-135] (0.2,0.4);
            \draw (0.2,-0.4) to[out=135,in=down] (-0.15,0) to[out=up,in=-135] (0.2,0.4);
        \end{tikzpicture}
        \ ,\quad
        \begin{tikzpicture}[centerzero]
            \draw (0.3,-0.4) -- (-0.3,0.4);
            \draw[wipe] (0,-0.4) to[out=135,in=down] (-0.25,0) to[out=up,in=-135] (0,0.4);
            \draw (0,-0.4) to[out=135,in=down] (-0.25,0) to[out=up,in=-135] (0,0.4);
            \draw[wipe] (-0.3,-0.4) -- (0.3,0.4);
            \draw (-0.3,-0.4) -- (0.3,0.4);
        \end{tikzpicture}
        =
        \begin{tikzpicture}[centerzero]
            \draw (0.3,-0.4) -- (-0.3,0.4);
            \draw[wipe] (0,-0.4) to[out=45,in=down] (0.25,0) to[out=up,in=-45] (0,0.4);
            \draw (0,-0.4) to[out=45,in=down] (0.25,0) to[out=up,in=-45] (0,0.4);
            \draw[wipe] (-0.3,-0.4) -- (0.3,0.4);
            \draw (-0.3,-0.4) -- (0.3,0.4);
        \end{tikzpicture}
        \ ,\quad
        \begin{tikzpicture}[anchorbase,scale=0.8]
            \draw (-0.4,0.6) -- (-0.4,0.1) -- (0.4,-0.5);
            \draw[wipe] (-0.4,-0.5) -- (0.2,0.3) -- (0.4,0.1) -- (0,-0.5);
            \draw[wipe] (0.2,0.3) -- (0.2,0.6);
            \draw (-0.4,-0.5) -- (0.2,0.3) -- (0.4,0.1) -- (0,-0.5);
            \draw (0.2,0.3) -- (0.2,0.6);
        \end{tikzpicture}
        =
        \begin{tikzpicture}[anchorbase]
            \draw (0.2,-0.4) -- (0.2,0) -- (-0.2,0.4);
            \draw[wipe] (-0.4,-0.4) -- (-0.2,-0.2) -- (-0.2,0) -- (0.2,0.4);
            \draw (-0.4,-0.4) -- (-0.2,-0.2) -- (-0.2,0) -- (0.2,0.4);
            \draw (0,-0.4) -- (-0.2,-0.2);
        \end{tikzpicture}
        \ ,\quad
        \begin{tikzpicture}[anchorbase,scale=0.8]
            \draw (-0.4,0.5) -- (0.2,-0.3) -- (0.4,-0.1) -- (0,0.5);
            \draw (0.2,-0.3) -- (0.2,-0.6);
            \draw[wipe] (-0.4,-0.6) -- (-0.4,-0.1) -- (0.4,0.5);
            \draw (-0.4,-0.6) -- (-0.4,-0.1) -- (0.4,0.5);
        \end{tikzpicture}
        =
        \begin{tikzpicture}[anchorbase]
            \draw (-0.4,0.4) -- (-0.2,0.2) -- (-0.2,0) -- (0.2,-0.4);
            \draw (0,0.4) -- (-0.2,0.2);
            \draw[wipe] (0.2,0.4) -- (0.2,0) -- (-0.2,-0.4);
            \draw (0.2,0.4) -- (0.2,0) -- (-0.2,-0.4);
        \end{tikzpicture}
        \ ,
        \\ \label{chess}
        \begin{tikzpicture}[anchorbase]
            \draw (-0.15,0) to[out=down,in=135] (0.15,-0.4);
            \draw[wipe] (-0.15,-0.4) to[out=45,in=down] (0.15,0) arc(0:180:0.15);
            \draw (-0.15,-0.4) to[out=45,in=down] (0.15,0) arc(0:180:0.15);
        \end{tikzpicture}
        = \beta\ \capmor
        \ ,\quad
        \begin{tikzpicture}[anchorbase]
            \draw (0.2,-0.5) to [out=135,in=down] (-0.15,-0.2) to[out=up,in=-135] (0,0);
            \draw[wipe] (-0.2,-0.5) to[out=45,in=down] (0.15,-0.2);
            \draw (-0.2,-0.5) to[out=45,in=down] (0.15,-0.2) to[out=up,in=-45] (0,0) -- (0,0.2);
        \end{tikzpicture}
        = \tau \, \mergemor
        \ ,\quad
        \begin{tikzpicture}[centerzero]
            \draw  (0,-0.4) -- (0,-0.2) to[out=45,in=down] (0.15,0) to[out=up,in=-45] (0,0.2) -- (0,0.4);
            \draw (0,-0.2) to[out=135,in=down] (-0.15,0) to[out=up,in=-135] (0,0.2);
        \end{tikzpicture}
        = \phi\
        \begin{tikzpicture}[centerzero]
            \draw(0,-0.4) -- (0,0.4);
        \end{tikzpicture}
        \ ,\quad
        \bubble = \delta 1_\one,
        \quad
        \lollydrop = 0.
    \end{gather}
\end{defin}

\begin{rem} \label{degenerate}
    When $\beta = \tau = 1$, the category $\Tcat_{\phi,\delta}$ of \cite[Def.~2.1]{GSZ21} is the quotient of the category from \cref{Tdef} by the relation $\poscross = \negcross$.  (In \cite{GSZ21}, the parameter $\phi$ is denoted $\alpha$.)  In this sense, the category from \cref{Tdef} should be thought of as a quantum analogue of the one in \cite[Def.~2.1]{GSZ21}.
\end{rem}

\begin{rem}
    The relations in \cref{Tdef} all have conceptual category-theoretic meanings:
    \begin{itemize}
        \item The relations \cref{venom} and the last two equalities in \cref{vortex} correspond to the fact that the crossings equip $\Tcat$ with the structure of a braided monoidal category.
        \item The first two equalities in \cref{vortex} assert that the generating object $\go$ is self-dual.
        \item In \cref{sec:functor}, we will define a functor from $\Tcat$ to the category of modules over the quantized enveloping algebra of type $F_4$.  This functor sends $\go$ to a simple module $\sV_q$.  By Schur's lemma, the endomorphism algebra of $\sV_q$ consists of scalar multiples of the identity morphism, which implies that first and third relations in \cref{chess} will be satisfied in the image for some $\phi,\beta$.  Similarly, we know that $\Hom(\sV_q^{\otimes 2}, \sV_q)$ is one-dimensional and so the second relation in \cref{chess} will also be satisfied for some $\tau$.  The fifth relation in \cref{chess} corresponds to the fact that we want no nonzero morphisms from $\one$ to $\go$
        \item The fourth relation in \cref{chess} states that $\go$ has categorical dimension $\delta$.  This will correspond to the quantum dimension of the module $\sV_q$.
    \end{itemize}
    For a similar discussion in the case $\beta = \tau=1$ (see \cref{degenerate}), we refer the reader to \cite[Rem.~2.3]{GSZ21}.
\end{rem}

\begin{prop} \label{windy}
    The following relations hold in $\Tcat$:
    \begin{gather} \label{topsy}
        \begin{tikzpicture}[anchorbase]
            \draw (-0.4,-0.2) to[out=up,in=180] (-0.2,0.2) to[out=0,in=135] (0,0);
            \draw (0,0) -- (0,-0.2);
            \draw (0.3,0.3) -- (0,0);
        \end{tikzpicture}
        =
        \mergemor
        =
        \begin{tikzpicture}[anchorbase]
            \draw (0.4,-0.2) to[out=up,in=0] (0.2,0.2) to[out=180,in=45] (0,0);
            \draw (0,0) -- (0,-0.2);
            \draw (-0.3,0.3) -- (0,0);
        \end{tikzpicture}
        \ ,\quad
        \triform
        :=
        \begin{tikzpicture}[centerzero]
          \draw (-0.2,-0.2) to (0,0);
          \draw (0.2,-0.2) to (0,0);
          \draw (0,0) arc(0:180:0.2) -- (-0.4,-0.2);
        \end{tikzpicture}
        =
        \begin{tikzpicture}[centerzero]
          \draw (-0.2,-0.2) to (0,0);
          \draw (0.2,-0.2) to (0,0);
          \draw (0,0) arc(180:0:0.2) -- (0.4,-0.2);
        \end{tikzpicture}
        \ ,\quad
        \explode
        :=
        \begin{tikzpicture}[centerzero]
          \draw (-0.2,0.2) to (0,0);
          \draw (0.2,0.2) to (0,0);
          \draw (0,0) arc(360:180:0.2) to (-0.4,0.2);
        \end{tikzpicture}
        =
        \begin{tikzpicture}[centerzero]
          \draw (-0.2,0.2) to (0,0);
          \draw (0.2,0.2) to (0,0);
          \draw (0,0) arc(180:360:0.2) to (0.4,0.2);
        \end{tikzpicture}
        \ ,
        \\ \label{turvy}
        \begin{tikzpicture}[centerzero]
            \draw (-0.2,0.3) -- (-0.2,0.1) arc(180:360:0.2) -- (0.2,0.3);
            \draw[wipe] (-0.3,-0.3) to[out=up,in=down] (0,0.3);
            \draw (-0.3,-0.3) to[out=up,in=down] (0,0.3);
        \end{tikzpicture}
        =
        \begin{tikzpicture}[centerzero]
            \draw (-0.2,0.3) -- (-0.2,0.1) arc(180:360:0.2) -- (0.2,0.3);
            \draw[wipe] (0.3,-0.3) to[out=up,in=down] (0,0.3);
            \draw (0.3,-0.3) to[out=up,in=down] (0,0.3);
        \end{tikzpicture}
        \ ,\quad
        \begin{tikzpicture}[centerzero]
            \draw (-0.3,-0.3) to[out=up,in=down] (0,0.3);
            \draw[wipe] (-0.2,0.3) -- (-0.2,0.1) arc(180:360:0.2) -- (0.2,0.3);
            \draw (-0.2,0.3) -- (-0.2,0.1) arc(180:360:0.2) -- (0.2,0.3);
        \end{tikzpicture}
        =
        \begin{tikzpicture}[centerzero]
            \draw (0.3,-0.3) to[out=up,in=down] (0,0.3);
            \draw[wipe] (-0.2,0.3) -- (-0.2,0.1) arc(180:360:0.2) -- (0.2,0.3);
            \draw (-0.2,0.3) -- (-0.2,0.1) arc(180:360:0.2) -- (0.2,0.3);
        \end{tikzpicture}
        \ ,\quad
        \begin{tikzpicture}[anchorbase]
            \draw (-0.15,-0.4) to[out=45,in=down] (0.15,0) arc(0:180:0.15);
            \draw[wipe] (-0.15,0) to[out=down,in=135] (0.15,-0.4);
            \draw (-0.15,0) to[out=down,in=135] (0.15,-0.4);
        \end{tikzpicture}
        = \beta^{-1}\ \capmor
        \ ,\quad
        \begin{tikzpicture}[anchorbase]
            \draw (-0.2,-0.5) to[out=45,in=down] (0.15,-0.2) to[out=up,in=-45] (0,0) -- (0,0.2);
            \draw[wipe] (0.2,-0.5) to [out=135,in=down] (-0.15,-0.2) to[out=up,in=-135] (0,0);
            \draw (0.2,-0.5) to [out=135,in=down] (-0.15,-0.2) to[out=up,in=-135] (0,0);
        \end{tikzpicture}
        = \tau^{-1}\, \mergemor
        \ ,
        \\ \label{pokey}
        \begin{tikzpicture}[anchorbase,scale=0.8]
            \draw (0.4,0.5) -- (-0.2,-0.3) -- (-0.4,-0.1) -- (0,0.5);
            \draw (-0.2,-0.3) -- (-0.2,-0.6);
            \draw[wipe] (0.4,-0.6) -- (0.4,-0.1) -- (-0.4,0.5);
            \draw (0.4,-0.6) -- (0.4,-0.1) -- (-0.4,0.5);
        \end{tikzpicture}
        =
        \begin{tikzpicture}[anchorbase]
            \draw (0.4,0.4) -- (0.2,0.2) -- (0.2,0) -- (-0.2,-0.4);
            \draw (0,0.4) -- (0.2,0.2);
            \draw[wipe] (-0.2,0.4) -- (-0.2,0) -- (0.2,-0.4);
            \draw (-0.2,0.4) -- (-0.2,0) -- (0.2,-0.4);
        \end{tikzpicture}
        ,\quad
        \begin{tikzpicture}[anchorbase,scale=0.8]
            \draw (0.4,0.6) -- (0.4,0.1) -- (-0.4,-0.5);
            \draw[wipe] (0.4,-0.5) -- (-0.2,0.3) -- (-0.4,0.1) -- (0,-0.5);
            \draw[wipe] (-0.2,0.3) -- (-0.2,0.6);
            \draw (0.4,-0.5) -- (-0.2,0.3) -- (-0.4,0.1) -- (0,-0.5);
            \draw (-0.2,0.3) -- (-0.2,0.6);
        \end{tikzpicture}
        =
        \begin{tikzpicture}[anchorbase]
            \draw (-0.2,-0.4) -- (-0.2,0) -- (0.2,0.4);
            \draw[wipe] (0.4,-0.4) -- (0.2,-0.2) -- (0.2,0) -- (-0.2,0.4);
            \draw (0.4,-0.4) -- (0.2,-0.2) -- (0.2,0) -- (-0.2,0.4);
            \draw (0,-0.4) -- (0.2,-0.2);
        \end{tikzpicture}
        ,
        \\ \label{wobbly}
        \begin{tikzpicture}[anchorbase]
            \draw (-0.2,0.2) -- (0.2,-0.2);
            \draw[wipe] (-0.4,0.2) to[out=down,in=225,looseness=2] (0,0) to[out=45,in=up,looseness=2] (0.4,-0.2);
            \draw (-0.4,0.2) to[out=down,in=225,looseness=2] (0,0) to[out=45,in=up,looseness=2] (0.4,-0.2);
        \end{tikzpicture}
        =
        \negcross
        =
        \begin{tikzpicture}[anchorbase]
            \draw (0.4,0.2) to[out=down,in=-45,looseness=2] (0,0) to[out=135,in=up,looseness=2] (-0.4,-0.2);
            \draw[wipe] (0.2,0.2) -- (-0.2,-0.2);
            \draw (0.2,0.2) -- (-0.2,-0.2);
        \end{tikzpicture}
        \ ,\quad
        \begin{tikzpicture}[anchorbase]
            \draw (-0.4,0.2) to[out=down,in=225,looseness=2] (0,0) to[out=45,in=up,looseness=2] (0.4,-0.2);
            \draw[wipe] (-0.2,0.2) -- (0.2,-0.2);
            \draw (-0.2,0.2) -- (0.2,-0.2);
        \end{tikzpicture}
        =
        \poscross
        =
        \begin{tikzpicture}[anchorbase]
            \draw (0.2,0.2) -- (-0.2,-0.2);
            \draw[wipe] (0.4,0.2) to[out=down,in=-45,looseness=2] (0,0) to[out=135,in=up,looseness=2] (-0.4,-0.2);
            \draw (0.4,0.2) to[out=down,in=-45,looseness=2] (0,0) to[out=135,in=up,looseness=2] (-0.4,-0.2);
        \end{tikzpicture}
        \ .
    \end{gather}
\end{prop}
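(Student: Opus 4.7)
The plan is to derive each block of relations in \cref{windy} by direct diagrammatic manipulation, using only the axioms \cref{vortex,venom,chess}: the snake identities, the defining equation of $\splitmor$, the Reidemeister II and braid relations and merge-slide identities, and the $\beta,\tau$ rescalings. No new conceptual ideas are required beyond these axioms.

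For \cref{topsy}, each identity is proved by expanding its left-hand side as a composition involving $\splitmor$ and $\capmor$ via the definition of $\splitmor$ in \cref{vortex}, and then collapsing a cap-cup pair using the snake identity to recover the claimed trivalent vertex. The two equalities defining $\triform$ and $\explode$ are proved in exactly the same way. For \cref{turvy}, the first two equalities are the cup-analogues of the last two identities of \cref{vortex} and follow by rotating those identities using the snake identity to bend caps into cups. The $\beta^{-1}$ relation is derived by post-composing the first identity of \cref{chess}, $\capmor\circ\poscross=\beta\,\capmor$, on the right with $\negcross$: applying Reidemeister II (the first equality of \cref{venom}) to $\capmor\circ\poscross\circ\negcross$ gives $\capmor$, so $\capmor=\beta\,\capmor\circ\negcross$ and hence $\capmor\circ\negcross=\beta^{-1}\,\capmor$. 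The $\tau^{-1}$ relation follows from the second identity of \cref{chess} by the analogous argument.

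For \cref{pokey}, each identity is obtained from the corresponding merge-slide identity in \cref{venom} by conjugating with a single crossing of the opposite sign and cancelling via Reidemeister II, which converts a slide over a positive crossing into one over a negative crossing. For \cref{wobbly}, each of the four strand-past-a-twist moves is reduced to Reidemeister II as follows: the wavy strand is recognised, via the snake identity, as a straight strand composed with an extraneous cup-cap pair, at which point the adjacent pair of opposite crossings that appears cancels via the first equality of \cref{venom}, leaving just the desired single crossing.

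The main obstacle is careful bookkeeping of crossing signs and strand orientations under the various rotations and isotopies; once one case in each block has been verified, the remaining cases follow by symmetric arguments and no step requires the braid relation or the $\phi$ rescaling in a substantive way.
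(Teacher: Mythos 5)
Your plans for \cref{topsy} and \cref{turvy} match the paper's proof, and for \cref{pokey} the underlying idea (dress the last merge-slide of \cref{venom} with crossings and cancel via the Reidemeister moves) is right, though you should note that the paper composes the last equality of \cref{venom} with $\negcross$ on the bottom \emph{and} a two-crossing three-strand braid word on the top --- not a single crossing --- before simplifying with the first two equalities of \cref{venom}.

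The genuine gap is in your argument for \cref{wobbly}. You propose to apply the snake identity to the wavy strand and then cancel ``an adjacent pair of opposite crossings'' via the first equality of \cref{venom} (Reidemeister~II). This does not work. Each diagram in \cref{wobbly} contains exactly \emph{one} crossing between the straight strand and the S-shaped strand, not a pair, so there is nothing for Reidemeister~II to cancel; and even if two crossings were present they would be of the \emph{same} sign, since the straight strand (carrying the wipe) lies consistently over, or consistently under, the S-curve. Moreover, the snake identity cannot be invoked in isolation on the S-shaped strand because its cup--cap pair is interlaced with the crossing: the snake relations in \cref{vortex} only collapse an isolated zig-zag. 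The move your sketch omits, and which carries the real weight here, is the ``crossing slides past a cap'' relation, namely the third and fourth equalities of \cref{vortex} (together with their cup analogues, the first two equalities of \cref{turvy}). The paper's proof applies one of these to pull the single crossing off the zig-zag, thereby isolating the cup--cap pair, and only then uses the snake identity from \cref{vortex} to straighten the strand and leave the bare crossing $\poscross$ or $\negcross$.
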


\begin{proof}
    The first and second equalities in \cref{topsy} follow immediately from the first four equalities in \cref{vortex}.  Then, using the first and second equalities in \cref{topsy}, we have
    \[
        \begin{tikzpicture}[centerzero]
          \draw (-0.2,-0.2) to (0,0);
          \draw (0.2,-0.2) to (0,0);
          \draw (0,0) arc(0:180:0.2) -- (-0.4,-0.2);
        \end{tikzpicture}
        =
        \begin{tikzpicture}[centerzero]
            \draw (0,-0.2) to (0,0) to[out=45,in=up,looseness=2] (0.3,-0.2);
            \draw (0,0) to[out=135,in=up,looseness=2] (-0.3,-0.2);
        \end{tikzpicture}
        =
        \begin{tikzpicture}[centerzero]
          \draw (-0.2,-0.2) to (0,0);
          \draw (0.2,-0.2) to (0,0);
          \draw (0,0) arc(180:0:0.2) -- (0.4,-0.2);
        \end{tikzpicture}
        \ ,
    \]
    proving the fourth equality in \cref{topsy}.  The proof of the sixth equality in \cref{topsy} is analogous.

    To prove the first equality in \cref{turvy}, we use \cref{vortex} to compute
    \[
        \begin{tikzpicture}[centerzero]
            \draw (-0.2,0.3) -- (-0.2,0.1) arc(180:360:0.2) -- (0.2,0.3);
            \draw[wipe] (-0.3,-0.3) \braidup (0,0.3);
            \draw (-0.3,-0.3) \braidup (0,0.3);
        \end{tikzpicture}
        =
        \begin{tikzpicture}[anchorbase]
            \draw (-1,0.5) -- (-1,0.2) arc(180:360:0.2) arc(180:0:0.2) arc(180:360:0.2) -- (0.2,0.5);
            \draw[wipe] (-0.3,-0.3) \braidup (0,0.5);
            \draw (-0.3,-0.3) \braidup (0,0.5);
        \end{tikzpicture}
        =
        \begin{tikzpicture}[anchorbase]
            \draw (-1,0.5) -- (-1,0.2) arc(180:360:0.2) arc(180:0:0.2) arc(180:360:0.2) -- (0.2,0.5);
            \draw[wipe] (-0.5,-0.3) \braidup (-0.8,0.5);
            \draw (-0.5,-0.3) \braidup (-0.8,0.5);
        \end{tikzpicture}
        =
        \begin{tikzpicture}[centerzero]
            \draw (-0.2,0.3) -- (-0.2,0.1) arc(180:360:0.2) -- (0.2,0.3);
            \draw[wipe] (0.3,-0.3) to[out=up,in=down] (0,0.3);
            \draw (0.3,-0.3) to[out=up,in=down] (0,0.3);
        \end{tikzpicture}
        \ .
    \]
    The proof of the second equality in \cref{turvy} is analogous.  The third and fourth equalities in \cref{turvy} follow from the first and second equalities in \cref{chess}, respectively, by composing on the bottom with $\negcross$ and using the second equality in \cref{venom}.

    To prove the first equation in \cref{pokey}, we compose the last equality in \cref{venom} on the bottom with $\negcross$ and on the top with
    $
        \begin{tikzpicture}[centerzero]
            \draw (-0.2,-0.2) -- (0,0.2);
            \draw (0,-0.2) -- (0.2,0.2);
            \draw[wipe] (0.2,-0.2) -- (-0.2,0.2);
            \draw (0.2,-0.2) -- (-0.2,0.2);
        \end{tikzpicture}
    $
    , and then use the first two equalities in \cref{venom}.  The proof of the second equation in \cref{pokey} is analogous.

    Next we have
    \[
        \begin{tikzpicture}[anchorbase]
            \draw (-0.4,-0.3) -- (-0.4,0) arc(180:0:0.2) arc(180:360:0.2) -- (0.4,0.3);
            \draw[wipe] (-0.2,-0.3) \braidup (0.2,0.3);
            \draw (-0.2,-0.3) \braidup (0.2,0.3);
        \end{tikzpicture}
        \overset{\cref{vortex}}{=}
        \begin{tikzpicture}[anchorbase]
            \draw (-0.4,-0.3) -- (-0.4,0) arc(180:0:0.2) arc(180:360:0.2) -- (0.4,0.3);
            \draw[wipe] (-0.2,-0.3) \braidup (-0.6,0.3);
            \draw (-0.2,-0.3) \braidup (-0.6,0.3);
        \end{tikzpicture}
        \overset{\cref{vortex}}{=}
        \negcross
        \qquad \text{and} \qquad
        \begin{tikzpicture}[anchorbase]
            \draw (0.4,-0.3) -- (0.4,0) arc(0:180:0.2) arc(360:180:0.2) -- (-0.4,0.3);
            \draw[wipe] (0.2,-0.3) \braidup (-0.2,0.3);
            \draw (0.2,-0.3) \braidup (-0.2,0.3);
        \end{tikzpicture}
        \overset{\cref{vortex}}{=}
        \begin{tikzpicture}[anchorbase]
            \draw (0.4,-0.3) -- (0.4,0) arc(0:180:0.2) arc(360:180:0.2) -- (-0.4,0.3);
            \draw[wipe] (0.2,-0.3) \braidup (0.6,0.3);
            \draw (0.2,-0.3) \braidup (0.6,0.3);
        \end{tikzpicture}
        \overset{\cref{vortex}}{=}
        \poscross,
    \]
    proving the second and third equalities in \cref{wobbly}.  The proofs of the first and fourth equalities in \cref{wobbly} are analogous.
\end{proof}

It follows from \cref{vortex}, \cref{topsy}, and the first two equalities in \cref{turvy} that the cups and caps equip $\Tcat$ with the structure of a \emph{strict pivotal} category.  Intuitively, this means that morphisms are invariant under ambient isotopy fixing the boundary.  Thus, for example, it makes sense to allow horizontal strands in diagrams:
\begin{equation} \label{honeycomb}
    \Hmor
    :=
    \begin{tikzpicture}[anchorbase]
        \draw (-0.4,-0.4) -- (-0.4,0) -- (-0.2,0.2) -- (0.2,-0.2) -- (0.4,0) -- (0.4,0.4);
        \draw (-0.2,0.2) -- (-0.2,0.4);
        \draw (0.2,-0.2) -- (0.2,-0.4);
    \end{tikzpicture}
    =
    \begin{tikzpicture}[anchorbase]
        \draw (0.4,-0.4) -- (0.4,0) -- (0.2,0.2) -- (-0.2,-0.2) -- (-0.4,0) -- (-0.4,0.4);
        \draw (0.2,0.2) -- (0.2,0.4);
        \draw (-0.2,-0.2) -- (-0.2,-0.4);
    \end{tikzpicture}
    \ .
\end{equation}
Since $\Tcat$ is also braided monoidal, the first relation in \cref{chess} then implies that it is a \emph{ribbon category}.  In addition, since the object $\go$ is self-dual, the cups and caps yield natural isomorphisms
\begin{equation} \label{twirl}
    \Tcat(\go^{\otimes m}, \go^{\otimes n})
    \cong \Tcat(\go^{\otimes (m+n)},\one),\qquad
    n,m \in \N.
\end{equation}
In what follows, we will sometimes use an equation number to refer to a rotated version of that equation.  For example, we will use the equation number \cref{chess} to refer to the relations
\[
    \begin{tikzpicture}[anchorbase,rotate=180]
        \draw (-0.15,0) to[out=down,in=135] (0.15,-0.4);
        \draw[wipe] (-0.15,-0.4) to[out=45,in=down] (0.15,0) arc(0:180:0.15);
        \draw (-0.15,-0.4) to[out=45,in=down] (0.15,0) arc(0:180:0.15);
    \end{tikzpicture}
    = \beta\ \cupmor
    \ ,\quad
    \begin{tikzpicture}[anchorbase,rotate=180]
        \draw (0.2,-0.5) to [out=135,in=down] (-0.15,-0.2) to[out=up,in=-135] (0,0);
        \draw[wipe] (-0.2,-0.5) to[out=45,in=down] (0.15,-0.2);
        \draw (-0.2,-0.5) to[out=45,in=down] (0.15,-0.2) to[out=up,in=-45] (0,0) -- (0,0.2);
    \end{tikzpicture}
    = \tau \, \splitmor
    \ ,\quad
    \lolly = 0.
\]

We say that a $\kk$-linear category is \emph{trivial} if all morphisms are equal to zero.  The category $\Tcat$ is trivial if and only if $1_\go = 0$.

\begin{lem} \label{forked}
    The category $\Tcat$ is trivial unless $\beta = \tau^2$.
\end{lem}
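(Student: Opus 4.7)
The plan is to compute the morphism $M := \capmor \circ \poscross \circ (\mergemor \otimes \id_\go) \colon \go^{\otimes 3} \to \one$ in two different ways, thereby deriving the relation $(\beta - \tau^2)\,\id_\go = 0$ in $\Tcat$, from which the triviality of $\Tcat$ when $\beta \ne \tau^2$ follows. On one hand, applying the first relation in \cref{chess}, $\capmor \circ \poscross = \beta \capmor$, immediately yields $M = \beta\, \triform$, where $\triform := \capmor \circ (\mergemor \otimes \id_\go)$.

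On the other hand, the braiding-merge naturality relation in \cref{venom}, namely
\[
    \poscross \circ (\mergemor \otimes \id_\go) = (\id_\go \otimes \mergemor) \circ (\poscross \otimes \id_\go) \circ (\id_\go \otimes \poscross),
\]
rewrites $M$ as $\capmor \circ (\id_\go \otimes \mergemor) \circ (\poscross \otimes \id_\go) \circ (\id_\go \otimes \poscross)$. Using the cyclicity $\capmor \circ (\id_\go \otimes \mergemor) = \capmor \circ (\mergemor \otimes \id_\go) = \triform$ from \cref{topsy}, together with two successive applications of $\mergemor \circ \poscross = \tau \mergemor$ (the second relation in \cref{chess}), this simplifies to $M = \tau^2\,\triform$. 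Equating the two computations gives $(\beta - \tau^2)\,\triform = 0$.

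To pass from $\triform$ to $\id_\go$, I would invoke the pivotal isomorphism \cref{twirl}, which is $\kk$-linear and identifies $\mergemor \in \Tcat(\go^{\otimes 2},\go)$ with $\triform \in \Tcat(\go^{\otimes 3},\one)$; this yields $(\beta - \tau^2)\,\mergemor = 0$. Composing on the right with $\splitmor$ and using the bubble relation $\mergemor \circ \splitmor = \phi\,\id_\go$ (the third relation in \cref{chess}), together with $\phi \in \kk^\times$, gives $(\beta - \tau^2)\,\id_\go = 0$, so $\Tcat$ is trivial whenever $\beta \ne \tau^2$. The main obstacle I foresee is the bookkeeping in the second computation: the naturality relation replaces one crossing by two, and one has to invoke the cyclicity of $\triform$ at each intermediate step so that each of the two crossings separately contributes a factor of $\tau$, producing $\tau^2$ rather than only $\tau$.
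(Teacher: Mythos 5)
Your proof is correct, and it takes a cleaner route to $(\beta-\tau^2)\mergemor = 0$ than the paper does. The paper's proof starts from $\beta\,\mergemor$, introduces a curl on the output of the trivalent vertex via the first relation of \cref{chess}, then performs a chain of (unlabelled) pivotal isotopy moves and a second application of \cref{chess}, and finally composes with $\poscross$ and uses \cref{chess,turvy} to resolve the diagram as $\beta\tau^{-1}\mergemor$, equated against $\tau\mergemor$. Your version instead closes off the trivalent vertex into $\triform$, computes $\capmor\circ\poscross\circ(\mergemor\otimes 1_\go)$ once by pulling the crossing up into a curl (first relation of \cref{chess}, giving $\beta\,\triform$), and once by the braiding-naturality relation (the third relation of \cref{venom}), after which the cyclicity of $\triform$ from \cref{topsy} lets each of the two resulting crossings contribute its own factor of $\tau$ via the second relation of \cref{chess}. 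This replaces the paper's sequence of ad hoc isotopy moves with a single explicit invocation of the defining naturality relation, which makes the mechanism behind the $\tau^2$ more transparent. Recovering $(\beta-\tau^2)\mergemor=0$ from $(\beta-\tau^2)\triform=0$ via \cref{twirl} is legitimate (or, more concretely, by attaching a cup and applying the snake relation from \cref{vortex}). The final step — composing with $\splitmor$, using the third relation in \cref{chess} and $\phi\in\kk^\times$ — matches the paper exactly. One small wrinkle, present in both your argument and the paper's: passing from $(\beta-\tau^2)1_\go=0$ to $1_\go=0$ tacitly assumes $\beta-\tau^2$ is not a zero-divisor in $\kk$; since the paper makes the same jump, this is not a defect of your proposal.
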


\begin{proof}
    We have
    \[
        \beta\, \mergemor
        \overset{\cref{chess}}{=}
        \begin{tikzpicture}[anchorbase]
        	\draw (0,0.45) to (0,0.225);
        	\draw (0.225,-0.15) to [out=0,in=-90](0.375,0);
        	\draw (0.375,0) to [out=90,in=0](0.225,0.15);
        	\draw (0.225,0.15) to [out=180,in=90](0,-0.225);
        	\draw[wipe] (0,0.225) to [out=-90,in=180] (0.225,-0.15);
        	\draw (0,0.225) to [out=-90,in=180] (0.225,-0.15);
        	\draw (0,-0.225) to (0,-0.3) to (0.2,-0.5);
            \draw (0,-0.3) to (-0.2,-0.5);
        \end{tikzpicture}
        =
        \begin{tikzpicture}[anchorbase]
            \draw (0,0) to (0.2,-0.4);
            \draw (0,0) to (-0.2,-0.4);
            \draw[wipe] (-0.3,0) to[out=down,in=down,looseness=1.5] (0.3,0);
            \draw (0,0.5) to[out=down,in=up] (-0.3,0) to[out=down,in=down,looseness=1.5] (0.3,0) to[out=up,in=up,looseness=1.5] (0,0);
        \end{tikzpicture}
        =
        \begin{tikzpicture}[anchorbase]
            \draw (0.5,0) to[out=45,in=0] (0.4,0.3) to[out=180,in=up] (0,-0.5);
            \draw (0.5,0) to[out=135,in=up,looseness=2] (0.2,-0.5);
            \draw[wipe] (0,0.5) to[out=down,in=down,looseness=2] (0.5,0);
            \draw (0,0.5) to[out=down,in=down,looseness=2] (0.5,0);
        \end{tikzpicture}
        =
        \begin{tikzpicture}[anchorbase]
            \draw (0,0.5) to (0,0.3);
            \draw (0.4,-0.1) to[out=0,in=0,looseness=1.5] (0.4,0.1) to[out=180,in=90] (0.2,-0.5);
            \draw (0.4,0.3) to[out=180,in=90] (0,-0.5);
            \draw[wipe] (0,0.3) to[out=-45,in=180] (0.4,-0.1);
            \draw (0,0.3) to[out=-45,in=180] (0.4,-0.1);
            \draw[wipe] (0,0.3) to (-0.1,0.2) to[out=-135,in=180] (0.4,-0.3) to[out=0,in=0,looseness=1.5] (0.4,0.3);
            \draw (0,0.3) to (-0.1,0.2) to[out=-135,in=180] (0.4,-0.3) to[out=0,in=0,looseness=1.5] (0.4,0.3);
        \end{tikzpicture}
        \overset{\cref{chess}}{=} \beta\,
        \begin{tikzpicture}[anchorbase]
            \draw (0,0.5) to (0,0.3);
            \draw (0.5,0) to[out=90,in=0] (0.3,0.2) to[out=180,in=90] (-0.1,-0.5);
            \draw[wipe] (0,0.3) to[out=-45,in=90] (0.2,-0.5);
            \draw (0,0.3) to[out=-45,in=90] (0.2,-0.5);
            \draw[wipe] (0,0.3) to[out=-135,in=90] (-0.1,0.1) to[out=-90,in=-90,looseness=1.5] (0.5,0);
            \draw (0,0.3) to[out=-135,in=90] (-0.1,0.1) to[out=-90,in=-90,looseness=1.5] (0.5,0);
        \end{tikzpicture}
        \ .
    \]
    Multiplying by $\beta^{-1}$ and composing on the bottom with $\poscross$ then gives
    \[
        \begin{tikzpicture}[anchorbase]
            \draw (0.2,-0.5) to [out=135,in=down] (-0.15,-0.2) to[out=up,in=-135] (0,0);
            \draw[wipe] (-0.2,-0.5) to[out=45,in=down] (0.15,-0.2);
            \draw (-0.2,-0.5) to[out=45,in=down] (0.15,-0.2) to[out=up,in=-45] (0,0) -- (0,0.2);
        \end{tikzpicture}
        =
        \begin{tikzpicture}[anchorbase]
            \draw (0,0.4) to (0,0.2) to[out=-45,in=45,looseness=1.3] (-0.2,-0.4);
            \draw (0.4,0) to[out=90,in=0] (0.3,0.1) to[out=180,in=90] (0.25,-0.4);
            \draw[wipe] (0,0.2) to[out=-135,in=90] (-0.1,0) to[out=-90,in=-90,looseness=1.2] (0.4,0);
            \draw (0,0.2) to[out=-135,in=90] (-0.1,0) to[out=-90,in=-90,looseness=1.2] (0.4,0);
        \end{tikzpicture}
        \overset{\cref{chess}}{\implies}
        \tau \mergemor
        =
        \beta\,
        \begin{tikzpicture}[anchorbase]
            \draw (-0.2,-0.5) to[out=45,in=down] (0.15,-0.2) to[out=up,in=-45] (0,0) -- (0,0.2);
            \draw[wipe] (0.2,-0.5) to [out=135,in=down] (-0.15,-0.2) to[out=up,in=-135] (0,0);
            \draw (0.2,-0.5) to [out=135,in=down] (-0.15,-0.2) to[out=up,in=-135] (0,0);
        \end{tikzpicture}
        \overset{\cref{turvy}}{=}
        \beta \tau^{-1}\, \mergemor
        \implies
        (\beta - \tau^2) \mergemor = 0.
    \]
    If $\beta \ne \tau^2$, this implies that $\mergemor = 0$.  But then the third relation in \cref{chess} implies that $1_\go = 0$, and hence $\Tcat$ is trivial.
\end{proof}

We now specialize to $\kk = \Q(q)$ and
\begin{gather} \label{carpool}
    \tau = q^{12},\qquad
    \beta = q^{24},
    \\ \label{cherry}
    \delta = \frac{[3][8][13][18]}{[4][6][9]},
    \qquad \text{and} \qquad
    \phi = \frac{[2][7][12]}{[4][6]},
    \qquad \text{where} \quad
    [n] = \frac{q^n - q^{-n}}{q - q^{-1}}.
\end{gather}
Explicit computation using SageMath shows that $\delta, \phi \in \N[q,q^{-1}]$.  The significance of these particular choices will become apparent in \cref{sec:functor}.  In particular, the above choice of $\delta$ corresponds to the quantum dimension of the 26-dimensional simple module over the quantized enveloping algebra of type $F_4$.

Define
\begin{equation} \label{radioactive}
    \posdotcross
    := q^6\
    \begin{tikzpicture}[centerzero]
        \draw (0.4,-0.4) -- (-0.2,0.2);
        \draw[wipe] (-0.2,-0.2) -- (0.4,0.4);
        \draw (-0.2,-0.2) -- (0.4,0.4);
        \draw (-0.2,-0.4) -- (-0.2,0.4);
    \end{tikzpicture}
    \qquad \text{and} \qquad
    \negdotcross
    := q^{-6}\
    \begin{tikzpicture}[centerzero]
        \draw (-0.2,-0.2) -- (0.4,0.4);
        \draw[wipe] (0.4,-0.4) -- (-0.2,0.2);
        \draw (0.4,-0.4) -- (-0.2,0.2);
        \draw (-0.2,-0.4) -- (-0.2,0.4);
    \end{tikzpicture}
    \ .
\end{equation}

\begin{lem}
    We have
    \begin{equation} \label{tangly}
        \begin{tikzpicture}[anchorbase]
            \draw (0.2,0.2) -- (-0.2,-0.2);
            \draw (0.4,0.2) to[out=down,in=-45,looseness=2] (0,0) to[out=135,in=up,looseness=2] (-0.4,-0.2);
            \posdot{0,0};
        \end{tikzpicture}
        = \negdotcross
        =
        \begin{tikzpicture}[anchorbase]
            \draw (-0.2,0.2) -- (0.2,-0.2);
            \draw (-0.4,0.2) to[out=down,in=225,looseness=2] (0,0) to[out=45,in=up,looseness=2] (0.4,-0.2);
            \posdot{0,0};
        \end{tikzpicture}
        \ ,\quad
        \begin{tikzpicture}[anchorbase]
            \draw (0.2,0.2) -- (-0.2,-0.2);
            \draw (0.4,0.2) to[out=down,in=-45,looseness=2] (0,0) to[out=135,in=up,looseness=2] (-0.4,-0.2);
            \negdot{0,0};
        \end{tikzpicture}
        = \posdotcross
        =
        \begin{tikzpicture}[anchorbase]
            \draw (-0.2,0.2) -- (0.2,-0.2);
            \draw (-0.4,0.2) to[out=down,in=225,looseness=2] (0,0) to[out=45,in=up,looseness=2] (0.4,-0.2);
            \negdot{0,0};
        \end{tikzpicture}
        \ .
    \end{equation}
\end{lem}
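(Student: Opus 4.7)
The plan is to expand the dotted crossings using the defining equation~\cref{radioactive} and then apply the pivotal/braided isotopy relations derived in \cref{windy} to identify the resulting morphisms. By symmetry (reflecting across a vertical axis) it suffices to prove the first equality, namely that the leftmost diagram equals $\negdotcross$; the rightmost equality follows by the mirror argument, and the two equalities in the second line of \cref{tangly} follow by the same strategy with the roles of $\posdot/\negdot$ and $\posdotcross/\negdotcross$ exchanged.

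First I would interpret the leftmost diagram: the $\posdot$ at the curvy crossing is shorthand for the same replacement as in \cref{radioactive}, yielding $q^6$ times a $3$-strand tangle analogous to the one defining $\posdotcross$ but with the crossing drawn in the curvy configuration shown. Next I would apply the appropriate part of \cref{wobbly} to straighten this crossing. The crucial observation is that in this orientation, \cref{wobbly} identifies the curvy crossing with a \emph{negative} crossing (rather than positive), converting the $3$-strand tangle into one with a standard $\negcross$ in place of the $\poscross$ that appears in the defining tangle of $\posdotcross$. The auxiliary vertical strand and its two trivalent vertices from \cref{radioactive} are carried along this isotopy into a displaced position, which I would then return to the standard position using the pivotal moves \cref{topsy,turvy,pokey}.

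At one step of this bookkeeping, the auxiliary strand must be pulled across the now-negative crossing through a trivalent vertex. This is exactly the situation addressed by $\mergemor \circ \negcross = \tau^{-1}\, \mergemor$ from \cref{turvy}, which supplies a compensating factor of $\tau^{-1} = q^{-12}$. Combining with the original $q^6$ prefactor, the leftmost diagram evaluates to $q^6 \cdot q^{-12}$ times the standard $3$-strand tangle from the definition of $\negdotcross$, i.e.\ to $q^{-6}$ times that tangle, which is $\negdotcross$ by \cref{radioactive}.

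The main obstacle is verifying that the isotopy passes the auxiliary strand through the trivalent vertex in precisely the manner that invokes $\mergemor \circ \negcross = \tau^{-1}\, \mergemor$, rather than $\capmor \circ \negcross = \beta^{-1}\, \capmor$ (also from \cref{turvy}), which would yield the different compensating factor $\beta^{-1} = q^{-24}$. Each individual isotopy step is elementary, but selecting the overall sequence so that exactly one $\tau^{-1}$ factor is picked up (rather than, say, a mixture of $\tau^{\pm 1}$ and $\beta^{\pm 1}$ factors that happen to coincide numerically) is where the substantive work lies.
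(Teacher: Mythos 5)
Your proposal is correct and follows essentially the same route as the paper's proof. The paper expands the cabled dotted crossing via \cref{radioactive} after first rotating the trivalent vertices using \cref{vortex,topsy}, then applies \cref{pokey,turvy} to move the trivalent vertex past the resulting negative crossing; the net scalar is exactly the single $\tau^{-1}=q^{-12}$ you identify, turning the $q^6$ prefactor into $q^{-6}$ and yielding $\negdotcross$.
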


\begin{proof}
    Using the third and fourth equalities in \cref{vortex} and the first two equalities in \cref{topsy} to rotate trivalent vertices, we have
    \[
        \begin{tikzpicture}[anchorbase]
            \draw (-0.2,0.2) -- (0.2,-0.2);
            \draw (-0.4,0.2) to[out=down,in=225,looseness=2] (0,0) to[out=45,in=up,looseness=2] (0.4,-0.2);
            \posdot{0,0};
        \end{tikzpicture}
        = q^6\
        \begin{tikzpicture}[anchorbase]
            \draw (-0.2,0.4) -- (-0.2,0.2) -- (0,0) -- (0.2,0.2) -- (0.2,0.4);
            \draw (-0.2,0.2) -- (-0.4,0);
            \draw (0,0) -- (-0.4,-0.4);
            \draw[wipe] (-0.3,-0.1) -- (0,-0.4);
            \draw (-0.4,0) -- (0,-0.4);
        \end{tikzpicture}
        \ \overset{\cref{pokey}}{\underset{\cref{turvy}}{=}}
        q^{-6}\
        \begin{tikzpicture}[centerzero]
            \draw (-0.2,-0.2) -- (0.4,0.4);
            \draw[wipe] (0.4,-0.4) -- (-0.2,0.2);
            \draw (0.4,-0.4) -- (-0.2,0.2);
            \draw (-0.2,-0.4) -- (-0.2,0.4);
        \end{tikzpicture}
        =
        \negdotcross\ ,
    \]
    completing the verification of the second equality in \cref{tangly}.  The proofs of the remaining equalities are analogous.
\end{proof}

\begin{cor}
    We have
    \begin{equation} \label{nuclear}
        \posdotcross
        = q^6
        \begin{tikzpicture}[centerzero]
            \draw (0.2,-0.2) -- (-0.4,0.4);
            \draw[wipe] (-0.4,-0.4) -- (0.2,0.2);
            \draw (-0.4,-0.4) -- (0.2,0.2);
            \draw (0.2,-0.4) -- (0.2,0.4);
        \end{tikzpicture}
        = q^{-6}\
        \begin{tikzpicture}[centerzero]
            \draw (0.2,-0.2) -- (-0.2,0);
            \draw[wipe] (-0.2,-0.2) -- (0.2,0);
            \draw (-0.2,-0.2) -- (0.2,0);
            \draw (-0.2,0.2) -- (-0.2,0) -- (0.2,0) -- (0.2,0.2);
        \end{tikzpicture}
        = q^{-6}\
        \begin{tikzpicture}[centerzero]
            \draw (-0.2,0.2) -- (0.2,0);
            \draw[wipe] (0.2,0.2) -- (-0.2,0);
            \draw (0.2,0.2) -- (-0.2,0);
            \draw (-0.2,-0.2) -- (-0.2,0) -- (0.2,0) -- (0.2,-0.2);
        \end{tikzpicture}
        \ ,\qquad
        \negdotcross
        = q^{-6}
        \begin{tikzpicture}[centerzero]
            \draw (-0.4,-0.4) -- (0.2,0.2);
            \draw[wipe] (0.2,-0.2) -- (-0.4,0.4);
            \draw (0.2,-0.2) -- (-0.4,0.4);
            \draw (0.2,-0.4) -- (0.2,0.4);
        \end{tikzpicture}
        = q^6\
        \begin{tikzpicture}[centerzero]
            \draw (-0.2,-0.2) -- (0.2,0);
            \draw[wipe] (0.2,-0.2) -- (-0.2,0);
            \draw (0.2,-0.2) -- (-0.2,0);
            \draw (-0.2,0.2) -- (-0.2,0) -- (0.2,0) -- (0.2,0.2);
        \end{tikzpicture}
        = q^6\
        \begin{tikzpicture}[centerzero]
            \draw (0.2,0.2) -- (-0.2,0);
            \draw[wipe] (-0.2,0.2) -- (0.2,0);
            \draw (-0.2,0.2) -- (0.2,0);
            \draw (-0.2,-0.2) -- (-0.2,0) -- (0.2,0) -- (0.2,-0.2);
        \end{tikzpicture}
        \ .
    \end{equation}
\end{cor}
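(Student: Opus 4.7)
The plan is to prove all six equalities by a uniform strategy: show that each alternative form equals $q^{12} X$, where $X := (\mergemor \otimes 1_\go) \circ (1_\go \otimes \poscross) \circ (\splitmor \otimes 1_\go)$ is the expression for which $\posdotcross = q^6 X$ by the definition \cref{radioactive} (and analogously for $\negdotcross$). The main tools are the braid naturality relations encoded in \cref{venom,pokey}, the twist relations $\mergemor \circ \poscross = \tau\, \mergemor$ and $\mergemor \circ \negcross = \tau^{-1}\, \mergemor$ from \cref{chess,turvy}, and the two equivalent forms of $\Hmor$ given in \cref{honeycomb}: $\Hmor = (\mergemor \otimes 1_\go) \circ (1_\go \otimes \splitmor) = (1_\go \otimes \mergemor) \circ (\splitmor \otimes 1_\go)$.

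First I will establish the second equality $\posdotcross = q^{-6}\, \Hmor \circ \poscross$. Writing $\Hmor$ in its first form and applying braid naturality to slide $\poscross$ through $(1_\go \otimes \splitmor)$ gives
\[
    \Hmor \circ \poscross = (\mergemor \otimes 1_\go) \circ (\poscross \otimes 1_\go) \circ (1_\go \otimes \poscross) \circ (\splitmor \otimes 1_\go).
\]
The twist relation $\mergemor \circ \poscross = \tau\, \mergemor = q^{12}\, \mergemor$ then collapses the leftmost pair, yielding $\Hmor \circ \poscross = q^{12} X = q^6\, \posdotcross$, as required.

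For the first equality (the mirrored form $q^6 Y$ with $Y := (1_\go \otimes \mergemor) \circ (\poscross \otimes 1_\go) \circ (1_\go \otimes \splitmor)$) and the third equality ($\posdotcross = q^{-6}\, \poscross \circ \Hmor$), I apply the same strategy using the dual naturality relations. Specifically, post-composing $Y$ with $\negcross$ and using braid naturality together with $\mergemor \circ \negcross = q^{-12}\, \mergemor$ yields $Y \circ \negcross = q^{-12}\, \Hmor$ (via the second form of $\Hmor$), hence $Y = q^{-12}\, \Hmor \circ \poscross = q^{-6}\, \posdotcross$. Likewise, $\poscross \circ \Hmor$ simplifies via naturality and the pivotal-dual twist relation $\poscross \circ \splitmor = \tau\, \splitmor = q^{12}\, \splitmor$ to $q^{12} X = q^6\, \posdotcross$.

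The three $\negdotcross$ equalities follow by the same computations with $\poscross$ and $\negcross$ interchanged, invoking $\mergemor \circ \negcross = q^{-12}\, \mergemor$ and $\negcross \circ \splitmor = q^{-12}\, \splitmor$ in place of their positive counterparts. The main obstacle is bookkeeping: accurately tracking the powers of $q$ accumulating from successive applications of the twist relations, though each individual step is routine once the appropriate form of $\Hmor$ is fixed and the braid naturality is applied.
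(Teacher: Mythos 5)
Your proposal is correct and its ingredients all check out, but it takes a more hands-on route than the paper does. The paper dispatches \cref{nuclear} in one line by citing \cref{tangly} (which already relates $\posdotcross$ and $\negdotcross$ by rotation, and whose proof used \cref{pokey,turvy,topsy}); the remaining forms in \cref{nuclear} are then viewed as rotations of that identity together with the definition \cref{radioactive}. You instead expand $\posdotcross$ directly from its definition and apply braid naturality (\cref{venom,pokey}), the twist relations $\mergemor\circ\poscross=q^{12}\,\mergemor$ and $\mergemor\circ\negcross=q^{-12}\,\mergemor$ from \cref{chess,turvy}, the two factorizations of $\Hmor$ in \cref{honeycomb}, and $\poscross\circ\negcross = 1$. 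That is a valid, self-contained derivation; in effect it re-proves the portion of \cref{tangly} needed here rather than quoting it. Two small caveats. First, the opening claim that ``each alternative form equals $q^{12}X$'' is not accurate for the mirrored diagram $Y$ (first and fourth equalities): there the correct statement is $Y = X$, i.e.\ the coefficient is $1$, not $q^{12}$; your own computation $Y = q^{-12}\,\Hmor\circ\poscross = X$ shows this, but the framing should be fixed. Second, you invoke $\poscross\circ\splitmor = q^{12}\,\splitmor$ as a ``pivotal-dual twist relation.'' This is not one of the relations written in \cref{chess} or \cref{turvy}; it is a rotated form of the second relation of \cref{chess}, and its validity rests on the convention announced after \cref{twirl} (``we will sometimes use an equation number to refer to a rotated version of that equation''), so you should cite that explicitly rather than presenting it as given.
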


\begin{proof}
    This follows immediately from \cref{tangly}.
\end{proof}

\begin{defin} \label{Fdef}
    Let $\Fcat_q$ be the $\Q(q)$-linear strict monoidal category generated by the object $\go$ and generating morphisms
    \begin{equation}
        \mergemor \colon \go \otimes \go \to \go,\quad
        \poscross,\ \negcross \colon \go \otimes \go \to \go \otimes \go,\quad
        \cupmor \colon \one \to \go \otimes \go,\quad
        \capmor \colon \go \otimes \go \to \one,
    \end{equation}
    subject to the relations \cref{vortex,venom} and the following additional relations:
    \begin{gather} \label{checkers}
        \begin{tikzpicture}[anchorbase]
            \draw (-0.15,0) to[out=down,in=135] (0.15,-0.4);
            \draw[wipe] (-0.15,-0.4) to[out=45,in=down] (0.15,0) arc(0:180:0.15);
            \draw (-0.15,-0.4) to[out=45,in=down] (0.15,0) arc(0:180:0.15);
        \end{tikzpicture}
        = q^{24}\ \capmor
        \ ,\quad
        \begin{tikzpicture}[anchorbase]
            \draw (0.2,-0.5) to [out=135,in=down] (-0.15,-0.2) to[out=up,in=-135] (0,0);
            \draw[wipe] (-0.2,-0.5) to[out=45,in=down] (0.15,-0.2);
            \draw (-0.2,-0.5) to[out=45,in=down] (0.15,-0.2) to[out=up,in=-45] (0,0) -- (0,0.2);
        \end{tikzpicture}
        = q^{12} \, \mergemor
        \ ,\quad
        \begin{tikzpicture}[centerzero]
            \draw  (0,-0.4) -- (0,-0.2) to[out=45,in=down] (0.15,0) to[out=up,in=-45] (0,0.2) -- (0,0.4);
            \draw (0,-0.2) to[out=135,in=down] (-0.15,0) to[out=up,in=-135] (0,0.2);
        \end{tikzpicture}
        = \frac{[2][7][12]}{[4][6]}\
        \begin{tikzpicture}[centerzero]
            \draw(0,-0.4) -- (0,0.4);
        \end{tikzpicture}
        \ ,\quad
        \bubble = \frac{[3][8][13][18]}{[4][6][9]} 1_\one,
        \quad
        \lollydrop = 0,
        \\ \label{symskein}
        \poscross - \negcross
        = \frac{q^{-4}-q^4}{[3]} \left( \jail - \hourglass + \Hmor - \Imor \right),
        \\ \label{qnuke}
        \posdotcross
        := q^6\
        \begin{tikzpicture}[centerzero]
            \draw (0.4,-0.4) -- (-0.2,0.2);
            \draw[wipe] (-0.2,-0.2) -- (0.4,0.4);
            \draw (-0.2,-0.2) -- (0.4,0.4);
            \draw (-0.2,-0.4) -- (-0.2,0.4);
        \end{tikzpicture}
        = \frac{[2]}{[4]} \left( q^{-8} \, \jail + q^8\, \hourglass + \poscross \right) - q^{-2}\, \Hmor - q^2 \Imor\, ,
        \\ \label{qsquare}
        \begin{multlined}
            \sqmor
            = \frac{q^{10} + q^8 + q^6 + q^4 + 1}{q^4 (q^4+1)^2} \ \jail
            + \frac{q^2(q^{10} + q^6 + q^4 + q^2 + 1)}{(q^4 + 1)^2} \ \hourglass
            - \frac{[2][3][6]}{[4]^2}\ \poscross
            \\
            \qquad \qquad
            + \frac{q^{10} + 2q^6 + q^2 + 1}{q^8 + q^4}\ \Hmor
            + \frac{q^{10} + q^8 + 2q^4 + 1}{q^6 + q^2}\ \Imor,
        \end{multlined}
        \\ \label{qpent}
        \begin{multlined}
            \pentmor =
            - \left(
                \begin{tikzpicture}[anchorbase]
                    \draw (-0.2,0) -- (0,0.25) -- (0.2,0);
                    \draw (0,0.25) -- (0,0.4);
                    \draw (-0.2,-0.25) -- (-0.2,0) -- (-0.3,0.4);
                    \draw (0.2,-0.25) -- (0.2,0) -- (0.3,0.4);
                \end{tikzpicture}
                +
                \begin{tikzpicture}[anchorbase]
                    \draw (-0.3,0.3) -- (0,0) -- (0.3,0.3);
                    \draw (0,0.3) -- (-0.15,0.15);
                    \draw (0,0) -- (0,-0.15) -- (-0.15,-0.3);
                    \draw (0,-0.15) -- (0.15,-0.3);
                \end{tikzpicture}
                +
                \begin{tikzpicture}[anchorbase]
                    \draw (0.3,0.3) -- (0,0) -- (-0.3,0.3);
                    \draw (0,0.3) -- (0.15,0.15);
                    \draw (0,0) -- (0,-0.15) -- (0.15,-0.3);
                    \draw (0,-0.15) -- (-0.15,-0.3);
                \end{tikzpicture}
                +
                \begin{tikzpicture}[anchorbase]
                    \draw (-0.3,-0.3) -- (0.3,0.3);
                    \draw (-0.3,0.3) -- (-0.15,-0.15);
                    \draw (0,0.3) -- (0.15,0.15);
                    \draw (0,0) -- (0.3,-0.3);
                \end{tikzpicture}
                +
                \begin{tikzpicture}[anchorbase]
                    \draw (0.3,-0.3) -- (-0.3,0.3);
                    \draw (0.3,0.3) -- (0.15,-0.15);
                    \draw (0,0.3) -- (-0.15,0.15);
                    \draw (0,0) -- (-0.3,-0.3);
                \end{tikzpicture}
            \right)
            - \frac{[7]}{[4]^2}
            \left(
                \begin{tikzpicture}[centerzero]
                    \draw (-0.15,-0.3) -- (-0.15,-0.23) arc(180:0:0.15) -- (0.15,-0.3);
                    \draw (-0.3,0.3) -- (0,0.08) -- (0.3,0.3);
                    \draw (0,0.3) -- (0,0.08);
                \end{tikzpicture}
                +
                \begin{tikzpicture}[centerzero]
                    \draw (-0.2,-0.3) -- (-0.2,0.3);
                    \draw (0,0.3) -- (0.15,0) -- (0.3,0.3);
                    \draw (0.15,0) -- (0.15,-0.3);
                \end{tikzpicture}
                +
                \begin{tikzpicture}[centerzero]
                    \draw (0.2,-0.3) -- (0.2,0.3);
                    \draw (0,0.3) -- (-0.15,0) -- (-0.3,0.3);
                    \draw (-0.15,0) -- (-0.15,-0.3);
                \end{tikzpicture}
                +
                \begin{tikzpicture}[centerzero]
                    \draw (-0.3,0.3) -- (-0.3,0.23) arc(180:360:0.15) -- (0,0.3);
                    \draw (0.3,0.3) -- (0.15,0) -- (-0.2,-0.3);
                    \draw (0.2,-0.3) -- (0.15,0);
                \end{tikzpicture}
                +
                \begin{tikzpicture}[centerzero]
                    \draw (0.3,0.3) -- (0.3,0.23) arc(360:180:0.15) -- (0,0.3);
                    \draw (-0.3,0.3) -- (-0.15,0) -- (0.2,-0.3);
                    \draw (-0.2,-0.3) -- (-0.15,0);
                \end{tikzpicture}
            \right)
            \\
            + \frac{[3]^2}{[4]^2}
            \left(
                \begin{tikzpicture}[centerzero]
                    \draw (0,0.3) -- (0,-0.15) -- (-0.15,-0.3);
                    \draw (0,-0.15) -- (0.15,-0.3);
                    \draw[wipe] (-0.2,0.3) -- (-0.2,0.25) arc(180:360:0.2) -- (0.2,0.3);
                    \draw (-0.2,0.3) -- (-0.2,0.25) arc(180:360:0.2) -- (0.2,0.3);
                \end{tikzpicture}
                +
                \begin{tikzpicture}[centerzero]
                    \draw (-0.3,0.3) -- (0,0) -- (0.3,0.3);
                    \draw (0,0) -- (-0.15,-0.3);
                    \draw[wipe] (0,0.3) to[out=-45,in=70] (0.15,-0.3);
                    \draw (0,0.3) to[out=-45,in=70] (0.15,-0.3);
                \end{tikzpicture}
                +
                \begin{tikzpicture}[centerzero]
                    \draw (0.3,0.3) -- (0,0) -- (-0.3,0.3);
                    \draw (0,0) -- (0.15,-0.3);
                    \draw[wipe] (0,0.3) to[out=225,in=110] (-0.15,-0.3);
                    \draw (0,0.3) to[out=225,in=110] (-0.15,-0.3);
                \end{tikzpicture}
                +
                \begin{tikzpicture}[centerzero]
                    \draw (-0.3,0.3) -- (-0.15,0.15) -- (0,0.3);
                    \draw (-0.15,0.15) -- (0.15,-0.3);
                    \draw[wipe] (0.3,0.3) -- (-0.15,-0.3);
                    \draw (0.3,0.3) -- (-0.15,-0.3);
                \end{tikzpicture}
                +
                \begin{tikzpicture}[centerzero]
                    \draw (0.3,0.3) -- (0.15,0.15) -- (0,0.3);
                    \draw (0.15,0.15) -- (-0.15,-0.3);
                    \draw[wipe] (-0.3,0.3) -- (0.15,-0.3);
                    \draw (-0.3,0.3) -- (0.15,-0.3);
                \end{tikzpicture}
            \right).
        \end{multlined}
    \end{gather}
\end{defin}

Note that the relations \cref{checkers} are simply the relations \cref{chess} with the parameters given by \cref{carpool,cherry}.  Hence, $\Fcat_q$ is a quotient of $\Tcat$ with these parameters.  Note also that $\Fcat_q$ inherits from $\Tcat$ the property of being a ribbon category.

\begin{rem}
    All of the coefficients appearing in \cref{Fdef} are regular at $q=1$.  Setting $q=1$ in these relations recovers the category $\Fcat_{7,26}$ of \cite[Def.~5.1]{GSZ21}.  More precisely, \cref{symskein} gives $\crossmor := \poscross = \negcross$.  Then relations \cref{vortex,venom,checkers,qnuke,qsquare,qpent} become (2.2), (2.3), (2.4), (2.10), (2.11), and (2.12) of \cite{GSZ21}, respectively.
\end{rem}

\begin{lem}
    The relation
    \begin{equation} \label{F4triangle}
        \trimor
        = - \frac{[9]}{[3]}\ \mergemor
    \end{equation}
    holds in $\Fcat_q$.
\end{lem}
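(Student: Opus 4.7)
The strategy is to derive \cref{F4triangle} by post-composing both sides of the pentagon relation \cref{qpent} with a $\capmor$ that joins two adjacent top external edges of $\pentmor$.  On the left-hand side, this capping creates a digon (two parallel internal edges) between the two corresponding pentagon vertices, since those vertices are already adjacent in the five-cycle.  Applying the $\phi$-digon identity (the third equation in \cref{checkers}) collapses this digon into $\phi$ times a single strand, thereby reducing the five-cycle to a three-cycle on the remaining three pentagon vertices, each still carrying one external edge.  Hence the capped LHS equals $\phi \cdot \trimor$.

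On the right-hand side, I would evaluate the cap termwise on each of the fifteen summands of \cref{qpent}.  The five tree-shaped diagrams in the first group (coefficient $-1$) reduce as follows: two become $\phi \mergemor$ via a $\phi$-digon collapse, two vanish because the cap creates a lollipop at a single trivalent vertex, which by pivotal symmetry is $\lolly = \lollydrop = 0$ via the fifth relation in \cref{chess}, and one reduces topologically to $\trimor$.  In the second group (coefficient $-[7]/[4]^2$), whose diagrams contain pre-existing cups or caps, two vanish again as lollipops, two give $\mergemor$ via $\phi$-digon collapses, and one produces $\delta \mergemor$ because the cap closes a pre-existing cap to form a disjoint bubble of value $\delta$ (fourth relation in \cref{chess}).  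The third group (coefficient $[3]^2/[4]^2$) contains genuine crossings; these must be resolved using the braid relations \cref{venom} together with the first two relations in \cref{checkers} governing crossings adjacent to caps and merges, and each diagram thereby reduces to a scalar multiple of $\mergemor$ or $\trimor$.

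Summing all contributions produces a linear equation of the form $\phi \, \trimor = a(q) \, \mergemor + b(q) \, \trimor$ for some explicit $a(q), b(q) \in \Q(q)$, which rearranges to $\trimor = \tfrac{a(q)}{\phi - b(q)} \mergemor$.  A direct computation (readily verified with SageMath) then simplifies $\tfrac{a(q)}{\phi - b(q)}$ to $-[9]/[3]$.  The main obstacle is the case-by-case topological reduction of the diagrams in the third group, which requires careful use of the braiding relations to eliminate the crossings before the resulting digons and lollipops can be collapsed; a cross-check against the classical limit $q \to 1$ — where the analogous relation in \cite{GSZ21} gives $\trimor = -3 \mergemor$ — confirms the final scalar.
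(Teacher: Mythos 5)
Your approach is logically valid but takes a genuinely different and considerably heavier route than the paper's.  The paper's proof evaluates $\mergemor \circ (\poscross - \negcross)$ in two ways: on one hand via the second relation in \cref{checkers} and the fourth relation in \cref{turvy} (yielding $(q^{12}-q^{-12})\mergemor$), and on the other hand by substituting the skein relation \cref{symskein} and using the first and third relations in \cref{checkers} (yielding $\tfrac{q^{-4}-q^4}{[3]}\bigl(\mergemor + \trimor - \phi\,\mergemor\bigr)$, since $\mergemor\circ\hourglass$ is a lollipop and hence vanishes, while $\mergemor\circ\Imor$ is a digon).  Equating and solving for $\trimor$ finishes in three lines.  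Your method instead caps two adjacent top legs of the pentagon in \cref{qpent} and collects terms; since \cref{qpent} is a defining relation of $\Fcat_q$, this is not circular, and your identification of the capped LHS as $\phi\,\trimor$ via a digon collapse is correct.  But the case analysis of the fifteen summands is substantially longer, and the real cost is hidden in the third group: those five diagrams carry genuine crossings, and reducing the capped versions to scalar multiples of $\mergemor$ needs careful pivotal rotation of the crossing next to the trivalent vertex together with the curl relations from \cref{checkers} and \cref{turvy} — work you assert but do not carry out.  In effect you are re-deriving a result that the skein relation gives almost for free from the most complicated defining relation of the category.

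One small factual correction: in your analysis of the second group you attribute the two surviving $\mergemor$ contributions to ``$\phi$-digon collapses.''  They are not digons.  In those two diagrams (the ones with a lone vertical strand next to the trivalent vertex) the cap simply joins the strand to one leg of the trivalent vertex, and the result is already isotopic to $\mergemor$ — no digon appears and no factor of $\phi$ arises.  The only digon-like contribution in the second group is the one where the cap closes a pre-existing cup, producing $\delta\,\mergemor$, as you correctly note.  Your tally of how many terms survive is right, but the announced mechanism would, if followed literally, introduce spurious factors of $\phi$ and give the wrong coefficient.
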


\begin{proof}
    We have
    \begin{gather*}
        \left( q^{12}-q^{-12} \right) \mergemor
        \overset{\cref{checkers}}{\underset{\cref{turvy}}{=}}
        \begin{tikzpicture}[anchorbase]
            \draw (0.2,-0.5) to [out=135,in=down] (-0.15,-0.2) to[out=up,in=-135] (0,0);
            \draw[wipe] (-0.2,-0.5) to[out=45,in=down] (0.15,-0.2);
            \draw (-0.2,-0.5) to[out=45,in=down] (0.15,-0.2) to[out=up,in=-45] (0,0) -- (0,0.2);
        \end{tikzpicture}
        -
        \begin{tikzpicture}[anchorbase]
            \draw (-0.2,-0.5) to[out=45,in=down] (0.15,-0.2) to[out=up,in=-45] (0,0) -- (0,0.2);
            \draw[wipe] (0.2,-0.5) to [out=135,in=down] (-0.15,-0.2) to[out=up,in=-135] (0,0);
            \draw (0.2,-0.5) to [out=135,in=down] (-0.15,-0.2) to[out=up,in=-135] (0,0);
        \end{tikzpicture}
        \overset{\cref{symskein}}{\underset{\cref{checkers}}{=}}
        \frac{q^{-4} - q^4}{[3]} \left( \mergemor + \trimor - \frac{[2][7][12]}{[4][6]} \mergemor \right)
        \\ \implies
        \trimor
        = \left( [3] \frac{q^{12}-q^{-12}}{q^{-4}-q^4} + \frac{[2][7][12]}{[4][6]} - 1 \right) \mergemor
        = - \frac{[9]}{[3]} \mergemor,
    \end{gather*}
    where the final equality is verified using SageMath.
\end{proof}

Consider the $\Q$-linear map
\begin{equation} \label{Qbar}
    \bar{\ } \colon \Q(q) \to \Q(q),\quad \overline{q^{\pm 1}} = q^{\mp 1}.
\end{equation}
We say that a functor $\bF \colon \cC \to \mathcal{D}$ between $\Q(q)$-linear categories is \emph{antilinear} if $\bF(cf) = \bar{c}\, \bF(f)$ for all morphisms $f$ in $\cC$ and all $c \in \Q(q)$.

\begin{lem} \label{disco}
    There is a unique antilinear monoidal functor $\Xi \colon \Fcat_q \to \Fcat_q$ satisfying $\bar{\go} = \go$ and defined on the generating morphisms by
    \begin{equation} \label{club}
        \Xi \left( \mergemor \right) = \mergemor,\quad
        \Xi \left( \poscross \right) = \negcross,\quad
        \Xi \left( \negcross \right) = \poscross,\quad
        \Xi \left( \cupmor \right) = \cupmor,\quad
        \Xi \left( \capmor \right) = \capmor.
    \end{equation}
    The functor $\Xi$ squares to the identity and also satisfies
    \begin{equation} \label{clubnuke}
        \Xi \left( \posdotcross \right) = \negdotcross,
        \qquad \qquad
        \Xi \left( \negdotcross \right) = \posdotcross.
    \end{equation}
\end{lem}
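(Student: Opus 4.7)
The plan is to invoke the universal property of $\Fcat_q$: to produce an antilinear monoidal functor $\Xi$, it suffices to define it on the generating object and morphisms as specified in \cref{club}, extend antilinearly and monoidally to the free $\Q(q)$-linear monoidal category on these generators, and then verify that the image of each defining relation of $\Fcat_q$ is again a valid equality in $\Fcat_q$. The image of each defining relation is obtained by swapping $\poscross \leftrightarrow \negcross$ and replacing every scalar coefficient $c \in \Q(q)$ by $\bar c$.

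Most of the relations are $\Xi$-invariant by an evident symmetry. The relations \cref{vortex,venom} and the first two relations of \cref{checkers} come in bar-symmetric pairs related by swapping positive and negative crossings, with the coefficients $1$ and $q^{\pm 12}$, $q^{\pm 24}$ exchanged under $\bar{\ }$. For example, applying $\Xi$ to the first equality of \cref{checkers} produces the negative-crossing curl on the left-hand side set equal to $q^{-24}\,\capmor$, which is exactly the third equality in \cref{turvy} together with the specialization $\beta = q^{24}$. The last three relations of \cref{checkers} (digon, bubble, tadpole) involve no crossings and have $\bar{\ }$-invariant coefficients, so they are fixed. For \cref{symskein}, the left-hand side negates under $\Xi$ and so does the coefficient $(q^{-4} - q^4)/[3]$, so this relation is also preserved.

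The three remaining relations \cref{qnuke,qsquare,qpent} are not immediately $\Xi$-invariant, so for these I would reduce to a direct check in a fixed basis. The key input is \cref{symskein}: every morphism appearing in these relations can be expanded using only the positive crossing, since any occurrence of $\negcross$ is eliminated via $\negcross = \poscross - \frac{q^{-4}-q^4}{[3]}\bigl(\jail - \hourglass + \Hmor - \Imor\bigr)$. Applying $\Xi$ to each relation and then using this substitution expresses the image as a linear combination of $\{\jail, \hourglass, \poscross, \Hmor, \Imor\}$ (with the purely planar terms left fixed), and the claim becomes an equality of rational functions in $q$ attached to each of these diagrams. Matching coefficients reduces the check to a finite list of scalar identities in $\Q(q)$, verifiable directly or with SageMath, as is already done elsewhere in the paper.

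Once $\Xi$ is well-defined, the identity $\Xi^2 = \id$ is immediate: $\Xi^2$ is a $\Q(q)$-linear monoidal endofunctor fixing every generator, so it is the identity by universality. For \cref{clubnuke}, apply $\Xi$ to \cref{radioactive}: antilinearity turns $q^6$ into $q^{-6}$, while the underlying diagram of a positive crossing with a parallel strand becomes the corresponding diagram with a negative crossing, which is exactly the diagram used to define $\negdotcross$; the companion identity is analogous. The main obstacle is the verification of $\Xi$-invariance for \cref{qsquare,qpent}, where the right-hand sides contain many terms of different topological types and the bookkeeping is substantial; the reduction to the five-element spanning set via \cref{symskein} is the crucial simplification that renders the check a finite, concrete computation.
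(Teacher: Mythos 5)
Your proposal is correct and is, in substance, the verification the paper declares ``straightforward'': define $\Xi$ on the free monoidal category, check that the image of each defining relation holds in $\Fcat_q$, and deduce well-definedness by the universal property. Your observations about which relations are obviously preserved and about $\Xi^2=\id$ and \cref{clubnuke} are all fine.

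One remark on the middle part, where you propose to handle \cref{qnuke,qsquare,qpent} by eliminating $\negcross$ via \cref{symskein} and matching coefficients. First, a small wording issue: at this stage of the paper no basis for $\Fcat_q(\go^{\otimes 2},\go^{\otimes 2})$ has been established (and a priori $\Fcat_q$ could even be trivial), so what you have is a fixed \emph{rewriting to a standard form} rather than a basis expansion; matching coefficients is then a sufficient but not a priori necessary check, which is of course all you need. Second, the bookkeeping you describe can be bypassed: one can check by inspection of the coefficients (using $\overline{[n]} = [n]$) that applying $\Xi$ to \cref{qnuke} produces a formal expression identical to $\Rot$ applied to \cref{qnuke}, and likewise for \cref{qsquare}; since any tensor ideal in a pivotal category is closed under $\Rot$, the rotated relation holds automatically and no expansion is required. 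For \cref{qpent}, the only effect of $\Xi$ is to flip the crossings in the last group of five terms, and this is handled by the remark immediately following \cref{protomolecule}, which is exactly the assertion that the two choices of crossing give the same sum once one applies \cref{skein}. So your \cref{symskein}-based reduction works, but the rotation observation makes it essentially a zero-computation check, which is presumably why the paper calls it straightforward.
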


\begin{proof}
    It is straightforward to verify that $\Xi$ respects the defining relations and hence is well defined.  It clearly squares to the identity.  Finally, \cref{clubnuke} follows immediately from \cref{radioactive}.
\end{proof}

Intuitively, the involution $\Xi$ is given by flipping all crossings and taking $q$ to $q^{-1}$.

\section{The 2-point endomorphism algebra\label{sec:2point}}

Throughout this section we work over the ground ring $\kk = \Q(q)$.  We assume the parameters $\tau$ and $\beta$ are given by \cref{carpool}, but leave the other parameters $\delta \in \kk$ and $\phi \in \kk^\times$ general.  We work in the quotient $\Tcat/\cI$ by a tensor ideal satisfying the following assumption.

\begin{assumption} \label{expanse}
    Assume that
    \begin{equation} \label{lager}
        \delta \notin \{ q^4 + q^{-4}, - q^{16} - q^{-16} \}
    \end{equation}
    and that $\cI$ is a tensor ideal of $\Tcat$ such that
    \begin{equation} \label{IPA}
        \jail\ ,\quad \hourglass\ ,\quad \Hmor\ ,\quad \Imor\ ,\quad \poscross + \negcross
        \quad \text{form a basis for } (\Tcat/\cI)(\go^{\otimes 2}, \go^{\otimes 2}).
    \end{equation}
\end{assumption}

The motivation for this assumption is that the images of the morphisms \cref{IPA} under the functor to be defined in \cref{sec:functor} are a basis in the target category.  Hence this functor factors through a quotient of $\Tcat$ by an ideal $\cI$ satisfying \cref{expanse}.  Our goal in this section is to deduce several relations in the \emph{2-point endomorphism algebra} $(\Tcat/\cI)(\go^{\otimes 2}, \go^{\otimes 2})$ that follow from this assumption.  These will be used to show that our functor actually factors through $\Fcat_q$.  Throughout this section we continue to use the same diagrams to denote their image in $\Tcat/\cI$.

For $m,n \ge 1$, consider the linear operator
\begin{equation} \label{Rot}
    \Rot \colon \Tcat(\go^{\otimes m}, \go^{\otimes n}) \to \Tcat(\go^{\otimes m}, \go^{\otimes n}),
    \quad
    \Rot
    \left(
        \begin{tikzpicture}[anchorbase]
            \draw[line width=2] (-0.1,-0.4) -- (-0.1,0);
            \draw (0.1,-0.4) -- (0.1,0);
            \draw (-0.1,0) -- (-0.1,0.4);
            \draw[line width=2] (0.1,0) -- (0.1,0.4);
            \filldraw[fill=white,draw=black] (-0.25,0.2) rectangle (0.25,-0.2);
            \node at (0,0) {$\scriptstyle{f}$};
        \end{tikzpicture}
    \right)
    =
    \begin{tikzpicture}[anchorbase]
        \draw (-0.25,0.2) rectangle (0.25,-0.2);
        \node at (0,0) {$\scriptstyle{f}$};
        \draw (-0.4,-0.4) -- (-0.4,0.2) arc (180:0:0.15);
        \draw (0.4,0.4) -- (0.4,-0.2) arc(360:180:0.15);
        \draw[line width=2] (-0.1,-0.4) -- (-0.1,-0.2);
        \draw[line width=2] (0.1,0.4) -- (0.1,0.2);
    \end{tikzpicture}
    \ ,
\end{equation}
where the bottom and top thick strands represent $1_\go^{\otimes (m-1)}$ and $1_\go^{\otimes (n-1)}$, respectively.  In other words,
\[
    \Rot(f) = \left( \capmor \otimes 1_\go^{\otimes n} \right) \circ (1_\go \otimes f \otimes 1_\go) \circ (1_\go^{\otimes m} \otimes \cupmor).
\]
It follows easily from \cref{windy,IPA} that $\Rot^2$ acts as the identity on $(\Tcat/\cI)(\go^{\otimes 2}, \go^{\otimes 2})$.  We will often decompose $(\Tcat/\cI)(\go^{\otimes 2}, \go^{\otimes 2})$ into the $\pm 1$-eigenspaces for $\Rot$.

\begin{prop} \label{skeinexist}
    There exist $z_1,z_2 \in \kk$ such that
    \begin{equation} \label{skein}
        \poscross - \negcross
        = z_1 \left( \jail - \hourglass \right) + z_2 \left( \Hmor - \Imor \right).
    \end{equation}
\end{prop}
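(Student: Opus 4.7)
My plan is to exploit the rotation involution $\Rot$ on $(\Tcat/\cI)(\go^{\otimes 2}, \go^{\otimes 2})$, whose square is the identity as already observed. Since we work over the field $\Q(q)$, this algebra decomposes as the direct sum of the $(+1)$- and $(-1)$-eigenspaces for $\Rot$. I will show that $\poscross - \negcross$ lies in the $(-1)$-eigenspace and that this eigenspace is spanned by $\jail - \hourglass$ and $\Hmor - \Imor$.

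The first step is to compute the action of $\Rot$ on each of the five basis elements of \cref{IPA}. Using the snake identities of \cref{vortex} together with the rotation-of-trivalent-vertex identities of \cref{topsy}, the cap--cup sandwich from \cref{Rot} simplifies directly to give
\[
    \Rot(\jail) = \hourglass, \qquad \Rot(\Hmor) = \Imor, \qquad \Rot(\poscross) = \negcross,
\]
with the three reverse identities then following from $\Rot^2 = \Id$. The first two are immediate collapsing-snake computations. For the third, one can either invoke the rotation identities for the crossings in \cref{wobbly}, or verify directly by tracing through the sandwich: the strand entering at the leftmost bottom input travels around via the cap--cup pair, passing under at the internal crossing, and emerges at the rightmost top output, while the other strand crosses over it directly. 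Reading the result as a standard $2$-to-$2$ morphism yields precisely the negative crossing.

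It follows at once that $\Rot(\poscross + \negcross) = \poscross + \negcross$. Consequently the $(+1)$-eigenspace is spanned by the three symmetric combinations $\jail + \hourglass$, $\Hmor + \Imor$, and $\poscross + \negcross$, while the $(-1)$-eigenspace is spanned by $\jail - \hourglass$ and $\Hmor - \Imor$. Since
\[
    \Rot(\poscross - \negcross) = \negcross - \poscross = -(\poscross - \negcross),
\]
the element $\poscross - \negcross$ lies in the $(-1)$-eigenspace and therefore admits an expression
\[
    \poscross - \negcross = z_1 \left( \jail - \hourglass \right) + z_2 \left( \Hmor - \Imor \right)
\]
for some $z_1, z_2 \in \kk$, as required. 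The only step requiring nontrivial content is the pivotal--braided compatibility $\Rot(\poscross) = \negcross$; once that is in hand, the rest of the argument is formal linear algebra using the basis \cref{IPA}.
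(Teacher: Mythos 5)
Your proof is correct and follows the same strategy as the paper's: both exploit that $\Rot$ squares to the identity on this morphism space, that $\poscross - \negcross$ is $(-1)$-eigenvector for $\Rot$, and that $\Rot$ swaps $\jail\leftrightarrow\hourglass$, $\Hmor\leftrightarrow\Imor$, $\poscross\leftrightarrow\negcross$; the paper simply carries out the eigenspace reasoning at the level of explicit coefficients rather than invoking the $\pm1$ eigenspace decomposition by name. The only substantive point to verify, $\Rot(\poscross) = \negcross$, is exactly the content of the rotation identities established in \cref{wobbly}, which you correctly cite.
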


\begin{proof}
    It follows from \cref{IPA} that
    \[
        \poscross - \negcross
        = z_1 \left( \jail - \hourglass \right) + z_2 \left( \Hmor - \Imor \right) + z_3 \left( \jail + \hourglass \right) + z_4 \left( \Hmor +\Imor \right) + z_5 \left( \poscross + \negcross \right)
    \]
    for some $z_1,z_2,z_3,z_4,z_5 \in \kk$.  Applying the operator $\Rot$ of \cref{Rot} and multiplying by $-1$, we have
    \[
        \poscross - \negcross
        = z_1 \left( \jail - \hourglass \right) + z_2 \left( \Hmor - \Imor \right) - z_3 \left( \jail + \hourglass \right) - z_4 \left( \Hmor +\Imor \right) - z_5 \left( \poscross + \negcross \right).
    \]
    It then follows from \cref{IPA} that $z_3=z_4=z_5=0$.
\end{proof}

\begin{rem} \label{hog}
    We refer to relation \cref{skein} as the \emph{skein relation}.  As long as $z_1,z_2 \in \kk^\times$, we can rescale $\capmor$ by $\frac{z_2}{z_1}$ and $\cupmor$ by $\frac{z_1}{z_2}$.  This rescales $\splitmor$ by $\frac{z_1}{z_2}$, and so we obtain a skein relation of the form
    \begin{equation} \label{skeiner}
        \poscross - \negcross
        = z \left( \jail - \hourglass + \Hmor - \Imor \right).
    \end{equation}
    At some point we will perform this rescaling; see \cref{waxing}.  However, working with the more general relation \cref{skein} gives us some additional flexibility.
\end{rem}

\begin{cor} \label{stout}
    The morphisms
    \begin{equation} \label{slamdunk}
        \jail\ ,\quad \hourglass\ ,\quad \Hmor\ ,\quad \Imor\ ,\quad \poscross
    \end{equation}
    form a basis for $(\Tcat/\cI)(\go^{\otimes 2}, \go^{\otimes 2})$.
\end{cor}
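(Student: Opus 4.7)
The plan is to derive \cref{slamdunk} as a basis by performing an invertible change of basis from the basis \cref{IPA} supplied by \cref{expanse}. The skein relation \cref{skein} from \cref{skeinexist} is exactly the tool needed: it expresses the difference $\poscross - \negcross$ in terms of $\jail, \hourglass, \Hmor, \Imor$, and this is what lets us trade the symmetric combination $\poscross + \negcross$ (appearing in \cref{IPA}) for the single morphism $\poscross$.

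Concretely, I would add \cref{skein} to the identity $\poscross + \negcross = (\poscross + \negcross)$, so that
\[
    2\, \poscross = (\poscross + \negcross) + z_1 \left( \jail - \hourglass \right) + z_2 \left( \Hmor - \Imor \right).
\]
Since $2$ is invertible in $\kk = \Q(q)$, this equation can be rewritten to express $\poscross$ as a $\kk$-linear combination of the elements of \cref{IPA} in which the coefficient of $\poscross + \negcross$ is $\tfrac{1}{2} \ne 0$. Equivalently, the transition matrix from the ordered tuple $\bigl(\jail, \hourglass, \Hmor, \Imor, \poscross + \negcross\bigr)$ to the ordered tuple $\bigl(\jail, \hourglass, \Hmor, \Imor, \poscross\bigr)$ is upper unitriangular with $\tfrac{1}{2}$ in the bottom-right entry, hence invertible.

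Invertibility of the transition matrix, together with the hypothesis \cref{IPA} that the first tuple is a basis of the five-dimensional space $(\Tcat/\cI)(\go^{\otimes 2}, \go^{\otimes 2})$, immediately gives that the second tuple \cref{slamdunk} is also a basis. No further computation is needed, and there is no real obstacle here—the work has already been done in \cref{skeinexist}, and \cref{stout} is essentially a packaging statement that removes $\negcross$ from the list of distinguished endomorphisms in favour of $\poscross$.
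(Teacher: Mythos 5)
Your proof is correct and follows essentially the same route as the paper, which simply states that the corollary ``follows immediately'' from \cref{IPA} and \cref{skein}; your elaboration of the invertible change of basis (adding the skein relation to $\poscross + \negcross$ and dividing by $2$) is exactly the implicit computation.
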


\begin{proof}
    This follows immediately from \cref{IPA,skein}.
\end{proof}

\begin{cor}
    We have
    \[
        \begin{tikzpicture}[centerzero]
            \draw (0.2,-0.4) to[out=135,in=down] (-0.15,0);
            \draw[wipe] (-0.2,-0.4) to[out=45,in=down] (0.15,0);
            \draw (-0.2,-0.4) to[out=45,in=down] (0.15,0) to[out=up,in=-45] (-0.2,0.4);
            \draw[wipe] (-0.15,0) to[out=up,in=-135] (0.2,0.4);
            \draw (-0.15,0) to[out=up,in=-135] (0.2,0.4);
        \end{tikzpicture}
        =
        \jail + z_1 \left( \poscross - q^{24}\ \hourglass\, \right) + z_2 \left( q^6\ \posdotcross - q^{12}\ \Imor \right).
    \]
\end{cor}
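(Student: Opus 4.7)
The plan is to recognise the left-hand side as the composition $\poscross \circ \poscross$ (two positive crossings stacked), and then apply the skein relation established in \cref{skeinexist} to the top crossing:
\[
    \poscross = \negcross + z_1\left(\jail - \hourglass\right) + z_2\left(\Hmor - \Imor\right).
\]
Substituting this into the top crossing gives
\[
    \poscross \circ \poscross
    \;=\; \negcross \circ \poscross
    + z_1\left(\jail\circ\poscross - \hourglass\circ\poscross\right)
    + z_2\left(\Hmor\circ\poscross - \Imor\circ\poscross\right),
\]
so the problem reduces to simplifying each of the five resulting compositions.

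Four of the five simplifications are immediate from relations already in hand. First, $\negcross\circ\poscross = \jail$ is the Reidemeister~II relation (first equality in \cref{venom}), and $\jail\circ\poscross = \poscross$ is trivial. For $\hourglass\circ\poscross$, factor $\hourglass = \cupmor \circ \capmor$; the first relation in \cref{chess} gives $\capmor\circ\poscross = \beta\,\capmor = q^{24}\,\capmor$, and hence $\hourglass\circ\poscross = q^{24}\,\hourglass$. Similarly, factor $\Imor = \splitmor \circ \mergemor$; the second relation in \cref{chess} gives $\mergemor\circ\poscross = \tau\,\mergemor = q^{12}\,\mergemor$, so $\Imor\circ\poscross = q^{12}\,\Imor$.

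The fifth simplification, $\Hmor\circ\poscross = q^6\,\posdotcross$, is exactly the content of one of the alternative expressions for $\posdotcross$ recorded in \cref{nuclear}: the identity
\[
    \posdotcross \;=\; q^{-6}\!
    \begin{tikzpicture}[centerzero]
        \draw (0.2,-0.2) -- (-0.2,0);
        \draw[wipe] (-0.2,-0.2) -- (0.2,0);
        \draw (-0.2,-0.2) -- (0.2,0);
        \draw (-0.2,0.2) -- (-0.2,0) -- (0.2,0) -- (0.2,0.2);
    \end{tikzpicture}
\]
displays precisely $q^{-6}(\Hmor \circ \poscross)$ on the right-hand side.

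Collecting everything yields
\[
    \poscross \circ \poscross
    \;=\; \jail + z_1\,\poscross - z_1 q^{24}\,\hourglass + z_2 q^6\,\posdotcross - z_2 q^{12}\,\Imor,
\]
which is the desired identity. There is no real obstacle here; the only thing to be careful about is matching the pictorial version of $\Hmor\circ\poscross$ with the correct one of the several forms of $\posdotcross$ listed in \cref{nuclear}.
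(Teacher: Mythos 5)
Your proof is correct and takes essentially the same approach as the paper's one-line argument, which "adds $\poscross$ to the top of \cref{skein}, then uses \cref{chess,nuclear}." Your version substitutes the skein relation for the top crossing (so the residual $\poscross$ ends up on the bottom) rather than composing $\poscross$ onto the top, but this is a cosmetic mirror image: the five resulting simplifications invoke the same relations and lead to the same formula.
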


\begin{proof}
    This follows from adding $\poscross$ to the top of \cref{skein}, then using \cref{chess,nuclear}.
\end{proof}

\begin{rem} \label{degenerate2}
    When $z_1=z_2=0$, relation \cref{skein} implies that $\poscross = \negcross$.  See \cref{degenerate}.  We refer to this as the \emph{degenerate} case.   (This is referred to as the \emph{classical} case in the introduction.)
\end{rem}

\begin{lem}
    We have
    \begin{equation}  \label{spoon}
        (1-\delta) z_1 + \phi z_2 = q^{24} - q^{-24}.
    \end{equation}
\end{lem}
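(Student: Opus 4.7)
The plan is to post-compose the skein relation \cref{skein} with $\capmor$, evaluate each resulting term as a scalar multiple of $\capmor \in (\Tcat/\cI)(\go^{\otimes 2}, \one)$, and cancel. The cancellation is valid because $\capmor \neq 0$: if $\capmor = 0$, the zig-zag identity in \cref{vortex} would force $\idstrand = 0$, contradicting \cref{expanse}.

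For the left-hand side, the first equality of \cref{chess} and its rotation (the third equality in \cref{turvy}) give $\capmor \circ \poscross = \beta \capmor$ and $\capmor \circ \negcross = \beta^{-1} \capmor$, so $\capmor \circ (\poscross - \negcross) = (q^{24} - q^{-24}) \capmor$. On the right, $\capmor \circ \jail = \capmor$ trivially, and $\capmor \circ \hourglass = \capmor \circ \cupmor \circ \capmor = \delta \capmor$ by the bubble relation in \cref{chess}, giving $\capmor \circ (\jail - \hourglass) = (1 - \delta) \capmor$.

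The main computation uses the pivotal identity $\capmor \circ (\idstrand \otimes \mergemor) = \capmor \circ (\mergemor \otimes \idstrand)$, both sides being equal to the trivalent sink $\triform$ (see \cref{topsy}, with well-definedness proved in \cref{windy}). For $\capmor \circ \Hmor$: the factorization $\Hmor = (\idstrand \otimes \mergemor) \circ (\splitmor \otimes \idstrand)$, immediate from the picture in \cref{honeycomb}, together with functoriality of $\otimes$ and the digon relation $\mergemor \circ \splitmor = \phi \idstrand$ from \cref{chess}, yields
\[
    \capmor \circ \Hmor = \capmor \circ (\mergemor \otimes \idstrand) \circ (\splitmor \otimes \idstrand) = \capmor \circ \bigl((\mergemor \circ \splitmor) \otimes \idstrand\bigr) = \phi \capmor.
\]
For $\capmor \circ \Imor$: writing $\Imor = \splitmor \circ \mergemor$ and using the analogous factorization $\splitmor = (\idstrand \otimes \mergemor) \circ (\cupmor \otimes \idstrand)$, which is a direct reading of the definition of $\splitmor$ in \cref{vortex}, the same trick gives $\capmor \circ \splitmor = \capmor \circ \bigl((\mergemor \circ \cupmor) \otimes \idstrand\bigr) = \capmor \circ (\lollydrop \otimes \idstrand) = 0$ since $\lollydrop = 0$ by \cref{chess}, whence $\capmor \circ \Imor = 0$.

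Combining, the post-composed skein relation reads $(q^{24} - q^{-24}) \capmor = (1 - \delta) z_1 \capmor + \phi z_2 \capmor$, and canceling $\capmor$ yields the claim. The main obstacle is spotting the compositional factorizations of $\Hmor$ and $\splitmor$ that funnel the computation through the digon relation and $\lollydrop = 0$ via the pivotal symmetry of $\triform$; once these factorizations are in hand, the rest is bookkeeping via functoriality of the tensor product.
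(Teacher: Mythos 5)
Your proof is correct and follows the same strategy as the paper: post-compose the skein relation with $\capmor$, evaluate $\capmor \circ \poscross = \beta\capmor$ and $\capmor \circ \negcross = \beta^{-1}\capmor$ on the left, and $\capmor \circ \jail$, $\capmor \circ \hourglass$, $\capmor \circ \Hmor$, $\capmor \circ \Imor$ via the bubble, digon, and lollipop relations on the right, then cancel the nonzero $\capmor$. Your explicit factorizations of $\Hmor$ and $\splitmor$ through $\triform$, while more detailed than the paper's one-line diagram chase, are the same underlying computation.
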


\begin{proof}
    We have
    \[
        (q^{24} - q^{-24})\ \capmor
        \overset{\cref{chess}}{\underset{\cref{turvy}}{=}}
        \begin{tikzpicture}[anchorbase]
            \draw (-0.15,0) to[out=down,in=135] (0.15,-0.4);
            \draw[wipe] (-0.15,-0.4) to[out=45,in=down] (0.15,0) arc(0:180:0.15);
            \draw (-0.15,-0.4) to[out=45,in=down] (0.15,0) arc(0:180:0.15);
        \end{tikzpicture}
        -
        \begin{tikzpicture}[anchorbase]
            \draw (-0.15,-0.4) to[out=45,in=down] (0.15,0) arc(0:180:0.15);
            \draw[wipe] (-0.15,0) to[out=down,in=135] (0.15,-0.4);
            \draw (-0.15,0) to[out=down,in=135] (0.15,-0.4);
        \end{tikzpicture}
        \overset{\cref{skein}}{\underset{\cref{chess}}{=}}
        \left( (1-\delta) z_1 + \phi z_2 \right)\ \capmor\, .
    \]
    If $(1-\delta) z_1 + \phi z_2 \ne q^{24}-q^{-24}$, then $\capmor = 0$.  Then, by the first equality in \cref{vortex}, we have $1_\go = 0$, and so the category is trivial, contradicting \cref{IPA}.
\end{proof}

\begin{lem}
    We have
    \begin{equation} \label{triangle}
        z_2 \trimor
        = \left( q^{12} - q^{-12} + \phi z_2 - z_1 \right) \mergemor.
    \end{equation}
\end{lem}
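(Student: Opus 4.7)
The plan is to compose the skein relation \cref{skein} on top with $\mergemor$, yielding an identity in $(\Tcat/\cI)(\go^{\otimes 2}, \go)$, and then to simplify each term using the defining relations. The key observation motivating this move is that merging the two top legs of $\Hmor$ produces exactly the triangle diagram $\trimor$.

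On the left-hand side, the second relation in \cref{chess} (with $\tau = q^{12}$) gives $\mergemor \circ \poscross = q^{12}\, \mergemor$ and the fourth relation in \cref{turvy} gives $\mergemor \circ \negcross = q^{-12}\, \mergemor$, so the left-hand side collapses to $(q^{12} - q^{-12})\, \mergemor$. On the right-hand side I evaluate the four basis morphisms in turn. Clearly $\mergemor \circ \jail = \mergemor$. Writing $\hourglass = \cupmor \circ \capmor$, the composite $\mergemor \circ \cupmor$ is the diagram $\lollydrop$, which vanishes by the last relation in \cref{chess}, so $\mergemor \circ \hourglass = 0$. The composition $\mergemor \circ \Hmor$ attaches a fresh trivalent vertex to the two upper legs of the H-shape, giving $\trimor$ directly from its definition. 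Finally, viewing $\Imor$ as $\splitmor \circ \mergemor$ (a bottom merge followed by a top split), the third relation in \cref{chess} yields $\mergemor \circ \Imor = (\mergemor \circ \splitmor) \circ \mergemor = \phi\, \mergemor$.

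Assembling these computations produces
\[
    (q^{12} - q^{-12})\, \mergemor \;=\; z_1\, \mergemor \;-\; \phi z_2\, \mergemor \;+\; z_2\, \trimor,
\]
which rearranges to the claimed identity \cref{triangle}. The only conceptual step is the choice to cap on top by $\mergemor$, which converts the $\Hmor$ term into the triangle; everything else is a direct application of the already-established relations, so no substantial obstacle is anticipated.
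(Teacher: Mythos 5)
Your argument is correct and is essentially the paper's own proof, just written in the opposite direction: the paper starts from $(q^{12}-q^{-12})\,\mergemor$, rewrites it via \cref{chess} and \cref{turvy} as $\mergemor \circ \poscross - \mergemor \circ \negcross$, and then substitutes the skein relation; you cap the skein relation with $\mergemor$ and expand term by term. The term-by-term evaluations you give (jail $\mapsto \mergemor$, hourglass $\mapsto 0$ via the lollipop-drop relation, $\Hmor \mapsto \trimor$, $\Imor \mapsto \phi\,\mergemor$ via the bigon relation) are exactly the content the paper compresses into the citation of \cref{chess}, so no gap.
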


\begin{proof}
    We have
    \[
        \left( q^{12}-q^{-12} \right) \mergemor
        \overset{\cref{chess}}{\underset{\cref{turvy}}{=}}
        \begin{tikzpicture}[anchorbase]
            \draw (0.2,-0.5) to [out=135,in=down] (-0.15,-0.2) to[out=up,in=-135] (0,0);
            \draw[wipe] (-0.2,-0.5) to[out=45,in=down] (0.15,-0.2);
            \draw (-0.2,-0.5) to[out=45,in=down] (0.15,-0.2) to[out=up,in=-45] (0,0) -- (0,0.2);
        \end{tikzpicture}
        -
        \begin{tikzpicture}[anchorbase]
            \draw (-0.2,-0.5) to[out=45,in=down] (0.15,-0.2) to[out=up,in=-45] (0,0) -- (0,0.2);
            \draw[wipe] (0.2,-0.5) to [out=135,in=down] (-0.15,-0.2) to[out=up,in=-135] (0,0);
            \draw (0.2,-0.5) to [out=135,in=down] (-0.15,-0.2) to[out=up,in=-135] (0,0);
        \end{tikzpicture}
        \overset{\cref{skein}}{\underset{\cref{chess}}{=}} (z_1 - \phi z_2) \mergemor + z_2 \trimor.
    \]
    Solving for $z_2 \trimor$ then yields \cref{triangle}.
\end{proof}

\begin{rem}
    Note that, when $z_2 \in \kk^\times$, \cref{triangle} allows us to write $\trimor$ as a multiple of the trivalent vertex $\mergemor$.  In the degenerate case (see \cref{degenerate,degenerate2}), one does not deduce such a relation until further relations are imposed; see \cite[Lem.~3.4]{GSZ21}.  Later, when we specialize the parameters $\delta$ and $\phi$, \cref{triangle} will reduce to \cref{F4triangle}; see \cref{bagel}.
\end{rem}

\begin{lem}
    We have
    \begin{equation} \label{pirate}
        \begin{aligned}
            (q^6+q^{-6}) \left( \posdotcross - \negdotcross \right)
            &= \phi z_1 \left(\, \jail - \hourglass \right) + \left( q^{12} - q^{-12} + \phi z_2 \right) \left( \Hmor - \Imor \right)
            \\
            &= \phi \left( \poscross - \negcross \right) + (q^{12}-q^{-12}) \left( \Hmor - \Imor \right)
        \end{aligned}
    \end{equation}
    and
    \begin{equation} \label{crate}
        2 z_2 \sqmor
        = (q^6-q^{-6}) \left( \posdotcross + \negdotcross \right)
        + \phi z_1 \left( \jail + \hourglass \right)
        + \big( q^{12} - q^{-12} + \phi z_2 - 2 z_1 \big) \left( \Hmor + \Imor \right).
    \end{equation}
\end{lem}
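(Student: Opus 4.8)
The plan is to prove both identities simultaneously by inserting the skein relation \cref{skein} into the middle strand of a three‑strand diagram. Introduce the $\kk$-linear operator
\[
    \Phi \colon (\Tcat/\cI)(\go^{\otimes 2},\go^{\otimes 2}) \to (\Tcat/\cI)(\go^{\otimes 2},\go^{\otimes 2}),
    \qquad
    \Phi(f) = (\mergemor \otimes 1_\go) \circ (1_\go \otimes f) \circ (\splitmor \otimes 1_\go),
\]
which splits off the leftmost strand, lets $f$ act on the other two, and merges back in. Reading the right‑hand side of \cref{radioactive} as a composite of generating morphisms shows that $\posdotcross = q^6\,\Phi(\poscross)$ and $\negdotcross = q^{-6}\,\Phi(\negcross)$; equivalently $\Phi(\poscross) = q^{-6}\,\posdotcross$ and $\Phi(\negcross) = q^{6}\,\negdotcross$.

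The next step is to evaluate $\Phi$ on the remaining basis elements from \cref{slamdunk}. The third relation in \cref{chess} gives $\mergemor \circ \splitmor = \phi\, 1_\go$, hence $\Phi(\jail) = \phi\,\jail$. Using only the pivotal structure (\cref{vortex,topsy,turvy}) one rewrites $\Phi(\hourglass) = \splitmor \circ \mergemor = \Imor$ and, likewise, $\Phi(\Imor) = \sqmor$. The one value that is not purely formal is $\Phi(\Hmor)$: unwinding the composite exhibits a triangular subdiagram (three trivalent vertices joined in a cycle, one external leg at each), and applying the appropriate rotated form of \cref{triangle} to that triangle replaces it by a single trivalent vertex scaled by $q^{12} - q^{-12} + \phi z_2 - z_1$, the rest of the diagram collapsing to $\Hmor$; thus, with $\mu := q^{12} - q^{-12} + \phi z_2 - z_1$, we get $z_2\,\Phi(\Hmor) = \mu\,\Hmor$.

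Applying $\Phi$ to \cref{skein} and substituting these values yields
\[
    q^{-6}\,\posdotcross - q^{6}\,\negdotcross
    = \phi z_1\,\jail - z_1\,\Imor + \mu\,\Hmor - z_2\,\sqmor .
\]
Now apply the rotation operator $\Rot$ to this identity, using that $\Rot$ is $\kk$-linear with $\Rot^2 = \id$, that it interchanges $\jail \leftrightarrow \hourglass$, $\Hmor \leftrightarrow \Imor$ and $\poscross \leftrightarrow \negcross$ and fixes $\sqmor$ (exactly as in the proof of \cref{skeinexist}), and that it interchanges $\posdotcross \leftrightarrow \negdotcross$ by \cref{tangly}; this gives
\[
    q^{-6}\,\negdotcross - q^{6}\,\posdotcross
    = \phi z_1\,\hourglass - z_1\,\Hmor + \mu\,\Imor - z_2\,\sqmor .
\]
Subtracting the second display from the first gives $(q^6+q^{-6})(\posdotcross-\negdotcross) = \phi z_1(\jail-\hourglass) + (\mu+z_1)(\Hmor-\Imor)$, and $\mu + z_1 = q^{12}-q^{-12}+\phi z_2$, which is the first equality in \cref{pirate}; the second equality there follows since $\phi(\poscross - \negcross) = \phi z_1(\jail - \hourglass) + \phi z_2(\Hmor - \Imor)$ by \cref{skein}. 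Adding the two displays and rearranging gives $2z_2\,\sqmor = (q^6-q^{-6})(\posdotcross+\negdotcross) + \phi z_1(\jail+\hourglass) + (\mu - z_1)(\Hmor+\Imor)$ with $\mu - z_1 = q^{12}-q^{-12}+\phi z_2 - 2z_1$, which is \cref{crate}. (Note that no division by $z_2$ is required: everywhere $z_2\,\Phi(\Hmor)$ is used as a single quantity, so the degenerate case $z_2 = 0$ is covered automatically.)

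The skein substitution and the $\Rot$ linear algebra are routine; the part requiring genuine care is the evaluation of $\Phi$ on the basis, above all the identity $z_2\,\Phi(\Hmor) = \mu\,\Hmor$ — one must correctly recognize the triangular subdiagram of $\Phi(\Hmor)$ and match it to the correct rotated form of \cref{triangle}. The only other points needing attention are the two diagram identities read off from the definitions, $\posdotcross = q^6\,\Phi(\poscross)$ (from \cref{radioactive}) and $\Rot(\posdotcross) = \negdotcross$ (from \cref{tangly}); neither is deep, but both require a careful reading of the string diagrams.
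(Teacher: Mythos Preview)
Your proof is correct and follows essentially the same strategy as the paper's. The paper composes the skein relation \cref{skein} with $\Hmor$ on top rather than conjugating via your operator $\Phi$; this produces precisely the $\Rot$-image (up to a global sign) of your first displayed identity, so after applying $\Rot$ and adding/subtracting the two arguments coincide.
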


\begin{proof}
    Adding $\Hmor$ to the top of \cref{skein}, and using \cref{nuclear,chess,triangle}, gives
    \begin{equation} \label{pie1}
        q^6\, \posdotcross - q^{-6}\, \negdotcross
        = z_1 \, \Hmor - \phi z_1\ \hourglass\, + z_2\, \sqmor - \left(q^{12} - q^{-12} + \phi z_2 - z_1 \right) \Imor.
    \end{equation}
    Applying the rotation operator $\Rot$ then gives
    \begin{equation} \label{pie2}
        q^6\, \negdotcross - q^{-6}\, \posdotcross
        = z_1\, \Imor - \phi z_1\ \jail\, + z_2\, \sqmor - \left(q^{12} - q^{-12} + \phi z_2 - z_1 \right) \Hmor.
    \end{equation}
    Subtracting \cref{pie2} from \cref{pie1} gives
    \begin{equation} \label{eyepatch}
        \left( q^6 + q^{-6} \right) \left( \posdotcross - \negdotcross \right)
        = \phi z_1 \left(\, \jail - \hourglass \right) + \left( q^{12} - q^{-12} + \phi z_2 \right) \left( \Hmor - \Imor \right),
    \end{equation}
    proving the first equality in \cref{pirate}.  The second equality in \cref{pirate} follows from \cref{skein}.

    On the other hand, adding \cref{pie1,pie2} gives
    \[
        \left( q^6 - q^{-6} \right) \left( \posdotcross + \negdotcross \right)
        = - \phi z_1 \left(\, \jail + \hourglass \right) + 2z_2\, \sqmor - \left( q^{12} - q^{-12} + \phi z_2 - 2 z_1 \right) \left( \Hmor + \Imor \right).
    \]
    Solving for the square gives \cref{crate}.
\end{proof}

\begin{prop} \label{cow}
    We have
    \begin{align} \label{cow+}
        \posdotcross &= \frac{\phi(q^{10}+q^{-10})}{\delta + q^{16} + q^{-16}} \left( q^{-8} \, \jail + q^8\, \hourglass + \poscross \right) - q^{-2}\, \Hmor- q^2 \Imor\, ,
        \\ \label{cow-}
        \negdotcross &= \frac{\phi(q^{10}+q^{-10})}{\delta + q^{16} + q^{-16}} \left(q^8\, \jail + q^{-8}\, \hourglass + \negcross \right) - q^2\, \Hmor - q^{-2} \Imor\, .
    \end{align}
\end{prop}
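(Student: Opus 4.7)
By \cref{stout}, the set $\{\jail, \hourglass, \Hmor, \Imor, \poscross\}$ is a basis of $(\Tcat/\cI)(\go^{\otimes 2}, \go^{\otimes 2})$, so there exist unique scalars $a, b, c, d, e \in \kk$ with
\[
    \posdotcross = a\ \jail + b\ \hourglass + c\ \Hmor + d\ \Imor + e\ \poscross,
\]
and an analogous expansion for $\negdotcross$ (with $\negcross$ in place of $\poscross$). The plan is to determine these five coefficients for $\posdotcross$ directly; the formula for $\negdotcross$ then follows by a parallel argument, or equivalently by the antilinear flip-crossings involution on $\Tcat/\cI$ analogous to the $\Xi$ of \cref{disco}.

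The linear equations derived so far already supply significant information. Using the $\Rot$-action on the basis (where $\jail \leftrightarrow \hourglass$, $\Hmor \leftrightarrow \Imor$, $\poscross$ is fixed by \cref{wobbly}, and $\posdotcross \leftrightarrow \negdotcross$ by \cref{tangly}), the identity \cref{pirate} pins down $a - b$ and $c - d$ in terms of $\phi$, $z_1$, $z_2$, and $q$. The relations \cref{pie1} and \cref{pie2} further constrain the $+1$-eigenspace combinations $a + b$, $c + d$, and $e$, but they involve $\sqmor$ as an auxiliary unknown; combining with \cref{spoon}, which couples $z_1$ and $z_2$ to $\phi$ and $\delta$, reduces the problem to a single missing scalar equation.

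The last ingredient needed is an independent relation that I would obtain by composing $\posdotcross$ on the top with $\capmor$ and evaluating the result in two ways. On the one hand, the basis expansion above gives
\[
    \capmor \circ \posdotcross = (a + b\delta + e q^{24})\, \capmor + c\, (\capmor \circ \Hmor) + d\, (\capmor \circ \Imor),
\]
using \cref{chess} and $\bubble = \delta$. On the other hand, the explicit defining diagram of $\posdotcross$ in \cref{radioactive} can be simplified directly via successive applications of \cref{pokey,chess,turvy}, and then reduced under the cap using pivotality. This trace-like reduction is exactly where the scalar $\delta + q^{16} + q^{-16}$ enters as a denominator, and the non-vanishing condition of \cref{expanse} guarantees that the resulting linear system has a unique solution, yielding the claimed coefficients. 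The main obstacle I anticipate is the bookkeeping in this cap-reduction: several successive uses of \cref{chess}, \cref{pokey}, and \cref{turvy} are required, and careful tracking of the scalar factors arising from $\tau = q^{12}$, $\beta = q^{24}$, along with the signs of crossings produced along the way, demands attention.
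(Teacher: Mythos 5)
Your starting point (expand $\posdotcross$ in the basis of \cref{stout}) and your last step (close off with $\capmor$) match the paper, but the middle of your argument does not supply enough constraints to determine the five coefficients $a,b,c,d,e$. First, a factual slip: by \cref{wobbly}, $\Rot$ does \emph{not} fix $\poscross$ — it sends $\poscross\mapsto\negcross$ and vice versa (only $\poscross+\negcross$ is $\Rot$-invariant). Consequently, substituting the basis expansion into \cref{pirate} pins down $a-b+ez_1$ and $c-d+ez_2$, not $a-b$ and $c-d$; $e$ is still undetermined. Adding \cref{pie1,pie2} does not help: their difference is \cref{pirate} and their sum is \cref{crate}, which introduces the five unknown coefficients of $\sqmor$ as fresh auxiliaries, so you gain equations and unknowns in equal measure. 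The cap closure gives one more scalar equation and \cref{spoon} is one linear relation between $z_1,z_2$; altogether you have about three usable constraints for five unknowns (not to mention that $z_1,z_2$ are themselves not yet determined — \cref{lemmaz} computes them \emph{from} \cref{cow}, so leaning on their values would be circular). The "single missing scalar equation" claim does not survive a count.

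The ingredient you are missing is the paper's composition trick. Writing $\posdotcross = a\,\jail + b\,\hourglass + c\,\Hmor + d\,\Imor + e\,\poscross$ and composing both sides on the bottom with $\negcross$, one uses \cref{nuclear} to simplify the left side to $q^{-6}\,\Hmor$ while the right side, after \cref{turvy,nuclear}, involves $\negdotcross$ with coefficient $cq^{-6}$. Solving for $\negdotcross$, applying $\Rot$ to obtain another expansion of $\posdotcross$, and matching coefficients against the original via \cref{IPA} produces the system $ca=-bq^{-18},\ cb=-eq^6,\ c^2=-dq^{-6},\ cd=1,\ ce=-aq^6$, which forces $c=-q^{-2}$, $d=-q^2$, $b=q^{16}a$, $e=q^8a$. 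Only then does the cap closure — $a+q^{16}a\delta-q^{-2}\phi+q^{32}a = q^{18}\phi$ — determine the remaining parameter $a$, and this is where the factor $\delta+q^{16}+q^{-16}$ (and the non-vanishing condition of \cref{expanse}) genuinely enters, namely from $1+q^{16}\delta+q^{32}=q^{16}(\delta+q^{16}+q^{-16})$ on the left, not from the direct reduction of $\capmor\circ\posdotcross$ on the right. Without this composition step your system is underdetermined, and the argument does not close.
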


\begin{proof}
    First note that \cref{cow-} follows from \cref{cow+} by applying either $\Rot$ or the involution $\Xi$ of \cref{disco}.  Thus, it suffices to prove \cref{cow+}.

    By \cref{stout}, we have a relation of the form
    \begin{equation} \label{cowseye1}
        \posdotcross = a\ \jail\, + b\, \hourglass + c\, \Hmor\, + d\, \Imor\, + e\, \poscross\, .
    \end{equation}
    Composing on the bottom with $\negcross$, and using \cref{turvy,nuclear}, gives
    \[
        q^{-6}\, \Hmor = a\, \negcross + b q^{-24}\ \hourglass + c q^{-6}\, \negdotcross\, + d q^{-12}\, \Imor\, + e\ \jail\, .
    \]
    Solving for $\negdotcross$ gives
    \[
        c\, \negdotcross
        =
        -eq^6\ \jail - bq^{-18}\ \hourglass + \Hmor - dq^{-6}\, \Imor\, - aq^6\, \negcross.
    \]
    Rotating then gives
    \[
        c\, \posdotcross
        =
        -bq^{-18}\ \jail - eq^6\ \hourglass - dq^{-6}\, \Hmor+ \Imor - aq^6\, \poscross.
    \]
    Comparing to \cref{cowseye1} and using \cref{stout}, we get
    \[
        ca = - bq^{-18},\quad
        cb = - eq^6,\quad
        c^2 = - dq^{-6},\quad
        cd = 1,\quad
        ce = -aq^6.
    \]
    From the third and fourth equations we get
    \[
        d^{-2} = -dq^{-6}
        \implies d = -q^2
        \implies c = -q^{-2}.
    \]
    Then we have
    \begin{equation} \label{dart}
        b = -q^{18} ca = q^{16} a,\quad
        e = -q^6 c^{-1}a = q^8 a.
    \end{equation}
    Substituting into \cref{cowseye1} gives
    \[
        \posdotcross = a\ \jail\, + q^{16} a\, \hourglass - q^{-2}\, \Hmor\, - q^2\, \Imor\, + q^8 a\, \poscross\, .
    \]
    Adding $\capmor$ to the top and using \cref{nuclear,chess,turvy} gives
    \[
        q^{18} \phi\ \capmor =
        \left( a + q^{16} a \delta - q^{-2} \phi + q^{32} a \right) \capmor.
    \]
    Thus
    \[
        q^{18} \phi = a \left( 1 + q^{16} \delta + q^{32} \right) - q^{-2} \phi
        \implies a = \frac{\phi(q^{18}+q^{-2})}{q^{32} + q^{16} \delta + 1}
        = \frac{\phi(q^2 + q^{-18})}{\delta+q^{16}+q^{-16}}.
    \]
    The relation \cref{cow+} now follows from \cref{dart}.
\end{proof}

\begin{lem} \label{lemmaz}
    We have
    \begin{align}
        z_1
        &= \frac{(q^{10} +q^{-10}) (q^6 + q^{-6}) (q^8-q^{-8})}{q^4 + q^{-4} - \delta},
        \\
        z_2
        &= \frac{(\delta+q^{16}+q^{-16})(q^6+q^{-6})(q^4+q^{-4})(q^2-q^{-2})}{\phi (q^4+q^{-4}-\delta)}.
    \end{align}
\end{lem}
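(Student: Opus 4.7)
The plan is to match two expressions for the difference $\posdotcross - \negdotcross$. From \cref{cow}, subtracting \cref{cow-} from \cref{cow+} and grouping basis elements yields
\[
    \posdotcross - \negdotcross = \frac{\phi(q^{10}+q^{-10})}{\delta+q^{16}+q^{-16}}\bigl[(q^{-8}-q^8)(\jail - \hourglass) + (\poscross-\negcross)\bigr] + (q^2-q^{-2})(\Hmor - \Imor).
\]
Multiplying by $(q^6+q^{-6})$ gives one expression for $(q^6+q^{-6})(\posdotcross - \negdotcross)$. The second form of \cref{pirate} provides another:
\[
    (q^6+q^{-6})(\posdotcross - \negdotcross) = \phi(\poscross - \negcross) + (q^{12}-q^{-12})(\Hmor - \Imor).
\]

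Next, substitute the skein relation \cref{skein} for $\poscross - \negcross$ in both expressions, so that everything is written in the basis from \cref{stout}. By the linear independence of $\jail - \hourglass$ and $\Hmor - \Imor$ (a consequence of \cref{IPA}), the coefficients of these two elements must agree. The equation from $\jail - \hourglass$ reads
\[
    \phi(q^{10}+q^{-10})(q^6+q^{-6})(q^{-8}-q^8 + z_1) = \phi z_1(\delta+q^{16}+q^{-16}),
\]
which rearranges to the stated formula for $z_1$ once one simplifies $(q^{10}+q^{-10})(q^6+q^{-6}) - (q^{16}+q^{-16}) = q^4 + q^{-4}$. The equation from $\Hmor - \Imor$ similarly determines $z_2$, with the key algebraic simplification being the factorization
\[
    q^{12} - q^{-12} - (q^6+q^{-6})(q^2-q^{-2}) = (q^6+q^{-6})(q^4+q^{-4})(q^2-q^{-2}),
\]
which comes from $q^6 - q^{-6} = (q^2-q^{-2})(q^4+1+q^{-4})$.

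Assumption \cref{expanse} guarantees that the two denominators $\delta - q^4 - q^{-4}$ and $\delta + q^{16} + q^{-16}$ are both nonzero, so the resulting formulas are well-defined. The argument is essentially purely computational; the main obstacle is bookkeeping the six basis elements in \cref{cow} correctly and spotting the two factorization identities above. As a consistency check one can verify that the values obtained for $z_1$ and $z_2$ satisfy the earlier relation \cref{spoon}: $(1-\delta)z_1 + \phi z_2 = q^{24} - q^{-24}$.
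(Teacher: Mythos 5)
Your proposal is correct and takes essentially the same approach as the paper: subtract \cref{cow-} from \cref{cow+}, use the skein relation to express everything in the basis of \cref{stout}, compare with \cref{pirate}, and solve the resulting linear equations. The only cosmetic difference is that you start from the second equality in \cref{pirate} (involving $\poscross - \negcross$) and then substitute \cref{skein} into it, while the paper uses the first equality directly — the two are equivalent since the first equality is precisely the second one with the skein relation already applied.
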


\begin{proof}
    From \cref{cow+,cow-}, we have
    \begin{align*}
        \posdotcross - \negdotcross
        &= \frac{\phi(q^{10} + q^{-10})}{\delta + q^{16} + q^{-16}} \left( (q^{-8}-q^8) \left( \, \jail - \, \hourglass\, \right) + \poscross - \negcross \right) + (q^2-q^{-2}) \left( \Hmor - \Imor \right)
        \\
        &\overset{\mathclap{\cref{skein}}}{=}\
        \frac{\phi(q^{10}+q^{-10})}{\delta+q^{16}+q^{-16}} (q^{-8}-q^8+z_1) \left( \, \jail - \, \hourglass\, \right) + \left( \frac{\phi z_2 (q^{10}+q^{-10})}{\delta+q^{16}+q^{-16}} + q^2 - q^{-2} \right) \left( \Hmor - \Imor \right).
    \end{align*}
    Comparing to \cref{pirate}, and using \cref{IPA}, we have
    \begin{align*}
        z_1
        &= \frac{(q^6+q^{-6})(q^{10}+q^{-10})}{\delta+q^{16}+q^{-16}} (q^{-8}-q^8+z_1)
        \\ \implies
        z_1(\delta + q^{16} + q^{-16})
        &= (q^{10} + q^{-10}) (q^6 + q^{-6}) (q^{-8}-q^8+z_1)
        \\ \implies
        z_1(\delta - q^4 - q^{-4})
        &= (q^{10}+q^{-10}) (q^6+q^{-6}) (q^{-8}-q^8)
        \\ \implies
        z_1
        &= \frac{(q^{10}+q^{-10}) (q^6+q^{-6}) (q^{-8}-q^8)}{\delta - q^4 - q^{-4}}
    \end{align*}
    and
    \begin{align*}
        q^{12} - q^{-12} + \phi z_2
        &= (q^6+q^{-6}) \left( \frac{\phi z_2 (q^{10}+q^{-10})}{\delta+q^{16}+q^{-16}} + q^2 - q^{-2} \right)
        \\ \implies
        q^{12} - q^{-12} - (q^6+q^{-6})(q^2 - q^{-2})
        &= \frac{\phi z_2 ((q^{10} + q^{-10})(q^6+q^{-6}) - \delta - q^{16} - q^{-16})}{\delta+q^{16}+q^{-16}}
        \\ \implies
        (q^6+q^{-6})(q^4+q^{-4})(q^2-q^{-2})
        &= \frac{\phi z_2 (q^4 + q^{-4} - \delta)}{\delta+q^{16}+q^{-16}}
        \\ \implies
        z_2
        &= \frac{(\delta+q^{16}+q^{-16})(q^6+q^{-6})(q^4+q^{-4})(q^2-q^{-2})}{\phi (q^4+q^{-4}-\delta)}.
        \qedhere
    \end{align*}
\end{proof}

\begin{cor} \label{waxing}
    We have $z_1=z_2$ if and only if
    \begin{equation} \label{moon}
        \phi
        = \frac{\delta + q^{16} + q^{-16}}{(q^{10}+q^{-10})(q^2+q^{-2})}.
    \end{equation}
\end{cor}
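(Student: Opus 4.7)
The plan is to substitute the explicit formulas for $z_1$ and $z_2$ from \cref{lemmaz} into the equation $z_1 = z_2$ and solve for $\phi$. Concretely, writing $\delta - q^4 - q^{-4} = -(q^4 + q^{-4} - \delta)$ so that the denominators differ by a sign (and invoking \cref{lager} to ensure this common denominator is nonzero), the equation $z_1 = z_2$ becomes
\[
    -\phi (q^{10}+q^{-10})(q^6+q^{-6})(q^{-8}-q^8)
    = (\delta+q^{16}+q^{-16})(q^6+q^{-6})(q^4+q^{-4})(q^2-q^{-2}).
\]

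Next, I would factor $q^{-8} - q^8 = -(q^4-q^{-4})(q^4+q^{-4}) = -(q^2-q^{-2})(q^2+q^{-2})(q^4+q^{-4})$, so that the left-hand side becomes
\[
    \phi (q^{10}+q^{-10})(q^6+q^{-6})(q^2-q^{-2})(q^2+q^{-2})(q^4+q^{-4}).
\]
The common factor $(q^6+q^{-6})(q^4+q^{-4})(q^2-q^{-2})$ now appears on both sides, and all three factors are nonzero in $\Q(q)$, so they can be cancelled.

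What remains is
\[
    \phi (q^{10}+q^{-10})(q^2+q^{-2}) = \delta + q^{16} + q^{-16},
\]
which, after dividing by the nonzero quantity $(q^{10}+q^{-10})(q^2+q^{-2})$, is exactly \cref{moon}. Since every step in this chain is a biconditional (cross-multiplication by nonzero quantities followed by cancellation of nonzero factors), this simultaneously establishes both directions of the ``if and only if.''

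There is no real obstacle here beyond keeping track of signs and correctly factoring $q^{-8}-q^8$; the computation is entirely routine and could alternatively be discharged by a one-line SageMath verification (as is done elsewhere in the paper) by simply clearing denominators in $z_1 - z_2 = 0$ and checking the resulting polynomial identity.
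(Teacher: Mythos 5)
Your proof is correct and takes essentially the same route as the paper: substitute the explicit formulas for $z_1,z_2$ from \cref{lemmaz} into $z_1=z_2$ and solve for $\phi$. The paper states the equivalence without spelling out the algebraic cancellations, whereas you make them explicit; the substance is the same.
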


\begin{proof}
    By \cref{lemmaz}, we have $z_1=z_2$ if and only if
    \begin{align*}
        \frac{(q^{10}+q^{-10}) (q^6+q^{-6}) (q^8-q^{-8})}{q^4 + q^{-4} - \delta}
        &= \frac{(\delta+q^{16}+q^{-16})(q^6+q^{-6})(q^4+q^{-4})(q^2-q^{-2})}{\phi (q^4+q^{-4}-\delta)}
        \\ \iff
        \phi &= \frac{\delta+q^{16}+q^{-16}}{(q^{10}+q^{-10})(q^2+q^{-2})}.
        \qedhere
    \end{align*}
\end{proof}

\begin{lem} \label{bull}
    If $z_1=z_2$, then
    \begin{align} \label{bull+}
        \posdotcross &= \frac{[2]}{[4]} \left( q^{-8} \, \jail + q^8\, \hourglass + \poscross \right) - q^{-2}\, \Hmor - q^2 \Imor\, ,
        \\ \label{bull-}
        \negdotcross &= \frac{[2]}{[4]} \left( q^8\, \jail + q^{-8}\, \hourglass + \negcross \right) - q^2\, \Hmor - q^{-2} \Imor\, .
    \end{align}
\end{lem}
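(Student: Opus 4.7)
The plan is to derive \cref{bull+,bull-} as a direct specialization of \cref{cow+,cow-} from \cref{cow}, using the formula for $\phi$ provided by \cref{waxing}.

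First, I would observe that by \cref{waxing}, the hypothesis $z_1 = z_2$ is equivalent to
\[
    \phi = \frac{\delta + q^{16} + q^{-16}}{(q^{10}+q^{-10})(q^2+q^{-2})}.
\]
Rearranging this identity gives
\[
    \frac{\phi(q^{10}+q^{-10})}{\delta + q^{16} + q^{-16}} = \frac{1}{q^2 + q^{-2}},
\]
which is precisely the scalar that appears in front of the bracketed expressions in \cref{cow+,cow-}.

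Next, I would substitute this simplified scalar into the conclusions of \cref{cow}. The relation \cref{cow+} becomes
\[
    \posdotcross = \frac{1}{q^2+q^{-2}} \left( q^{-8}\ \jail + q^8\ \hourglass + \poscross \right) - q^{-2}\ \Hmor - q^2\ \Imor,
\]
which is \cref{bull+}, and the same substitution in \cref{cow-} yields \cref{bull-}.

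There is no real obstacle here; the lemma is a clean specialization and the only content is the algebraic rearrangement of the formula for $\phi$, which the preceding results have already done. The role of \cref{bull} is presumably to record the form that \cref{cow+,cow-} take under the normalization \cref{skeiner} of \cref{hog}, which will be used when comparing to the defining relations of $\Fcat_q$ in \cref{sec:specialize}.
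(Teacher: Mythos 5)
Your proposal is correct and matches the paper's argument exactly: the paper's proof is the one-line ``This follows from \cref{cow,waxing}'', and you have simply spelled out the algebraic substitution that this invokes.
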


\begin{proof}
    This follows from \cref{cow,waxing}.
\end{proof}

Note that \cref{bull+} is precisely \cref{qnuke}, while \cref{bull-} is its image under the rotation operator $\Rot$ of \cref{Rot}.

\section{Specializing the quantum dimension\label{sec:specialize}}

Throughout this section, we work over the ground ring $\kk = \Q(q)$ and make the choices \cref{carpool,cherry} for the parameters $\phi, \beta, \tau, \delta$.  We also continue to make \cref{expanse} and work in the quotient category $\Tcat/\cI$.

\begin{lem} \label{zbalance}
    We have
    \[
        z := z_1 = z_2 = \frac{q^{-4}-q^{4}}{[3]}.
    \]
\end{lem}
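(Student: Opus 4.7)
The plan is to combine the explicit formulas from \cref{lemmaz} with the specializations \cref{cherry} and do a direct verification, in two stages.

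First, I would check that the specialized values of $\delta$ and $\phi$ satisfy the equality \cref{moon} from \cref{waxing}, namely
\[
    \phi = \frac{\delta + q^{16} + q^{-16}}{(q^{10}+q^{-10})(q^2+q^{-2})}.
\]
With $\phi = \frac{[2][7][12]}{[4][6]}$ and $\delta = \frac{[3][8][13][18]}{[4][6][9]}$, this is a purely symbolic identity in $q$ that can be verified by clearing denominators and expanding all quantum integers $[n] = \frac{q^n - q^{-n}}{q - q^{-1}}$. This confirms $z_1 = z_2$.

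Second, I would compute the common value. Using \cref{lemmaz},
\[
    z_1 = \frac{(q^{10}+q^{-10})(q^6+q^{-6})(q^{-8}-q^8)}{\delta - q^4 - q^{-4}},
\]
so it suffices to verify that substituting $\delta = \frac{[3][8][13][18]}{[4][6][9]}$ yields
\[
    \frac{(q^{10}+q^{-10})(q^6+q^{-6})(q^{-8}-q^8)}{\delta - q^4 - q^{-4}} = \frac{q^{-4}-q^4}{[3]}.
\]
After rewriting the numerator factors as $q^{10}+q^{-10} = \frac{[20]}{[10]}$, $q^6+q^{-6} = \frac{[12]}{[6]}$, $q^{-8}-q^8 = -(q-q^{-1})[8]$, and similarly converting $\delta - q^4 - q^{-4}$ using $q^4+q^{-4} = \frac{[8]}{[4]}$, this reduces to an identity among quantum integers that is verified by direct calculation (or SageMath, in the spirit of the other verifications in the paper).

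I do not expect any conceptual obstacles here: the relation $z_1 = z_2$ is already encoded in \cref{waxing}, and both stages are symbolic manipulations of quantum integers. The main nuisance is bookkeeping during the simplifications, in particular rewriting the denominator $\delta - q^4 - q^{-4} = \frac{[3][8][13][18] - [4][6][9]\cdot\frac{[8]}{[4]}}{[4][6][9]}$ in a form where a large factor cancels against the $(q^{10}+q^{-10})(q^6+q^{-6})(q^{-8}-q^8)$ numerator to leave just $[3]^{-1}(q^{-4}-q^4)$; this is the sort of check best delegated to the SageMath notebook accompanying the paper.
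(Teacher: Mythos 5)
Your proposal is correct and follows essentially the same strategy as the paper: first verify symbolically that the specialized $\delta,\phi$ satisfy \cref{moon}, invoking \cref{waxing} to get $z_1=z_2$, then compute the common value by a SageMath-style check. The only minor difference is in the second step: the paper uses \cref{spoon} (with $z_1=z_2=z$) to obtain the compact expression $z = \frac{q^{24}-q^{-24}}{1-\delta+\phi}$ and then verifies that this equals $\frac{q^{-4}-q^4}{[3]}$, whereas you substitute directly into the formula for $z_1$ from \cref{lemmaz}; both routes reduce to an equivalent quantum-integer identity.
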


\begin{proof}
    Direct computation using SageMath verifies that
    \[
        \frac{\delta+q^{16}+q^{-16}}{(q^{10}+q^{-10})(q^2+q^{-2})}
        = \frac{[2][7][12]}{[4][6]}
        = \phi.
    \]
    Thus $z_1 = z_2$ by \cref{waxing}.  Then \cref{spoon} implies that $(1-\delta+\phi)z = q^{24} - q^{-24}$.  Further computation using SageMath then verifies that
    \[
        z = \frac{q^{24}-q^{-24}}{1-\delta+\phi}
        = \frac{q^{-4}-q^4}{[3]}.
        \qedhere
    \]
\end{proof}

\begin{prop} \label{bagel}
    Relations \cref{F4triangle,qsquare} are satisfied.
\end{prop}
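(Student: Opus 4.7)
The plan is to derive both relations in $\Tcat/\cI$ from identities already established in \cref{sec:2point}, then match coefficients using the specializations \cref{carpool,cherry} with the aid of a short SageMath computation. The key simplifying fact is \cref{zbalance}, which collapses the two-parameter family of skein-type identities to a one-parameter situation: we may write $z := z_1 = z_2 = (q^{-4}-q^{4})/[3]$ throughout.

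For \cref{F4triangle}, I would simply invoke \cref{triangle} with $z_1 = z_2 = z$, which gives
\[
    z\,\trimor \;=\; \big(q^{12} - q^{-12} + \phi z - z\big)\,\mergemor,
\]
and then solve for $\trimor$. Substituting the explicit values of $z$ and $\phi = [2][7][12]/([4][6])$ and simplifying (via SageMath) reduces the scalar to $-[9]/[3]$. This is essentially the same algebraic identity verified in the proof of the earlier \cref{F4triangle} in $\Fcat_q$, so only the computer-algebra step needs to be rerun here.

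For \cref{qsquare}, the starting point is \cref{crate}, which with $z_1=z_2=z$ reads
\[
    2z\,\sqmor \;=\; (q^6 - q^{-6})\big(\posdotcross + \negdotcross\big) + \phi z \big(\,\jail + \hourglass\,\big) + \big(q^{12} - q^{-12} + \phi z - 2z\big)\big(\,\Hmor + \Imor\,\big).
\]
I would then substitute the expressions for $\posdotcross$ and $\negdotcross$ from \cref{bull+,bull-} to expand the right-hand side in terms of $\jail$, $\hourglass$, $\Hmor$, $\Imor$, and $\poscross + \negcross$; a final application of the skein relation \cref{skein}, written as $\poscross + \negcross = 2\poscross - z\big(\,\jail - \hourglass + \Hmor - \Imor\,\big)$, puts the right-hand side in the basis \cref{slamdunk} of \cref{stout}. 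Dividing both sides by $2z$ then yields a formula for $\sqmor$ of the form required by \cref{qsquare}, and what remains is to match the five resulting coefficients against those displayed there.

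The only real obstacle is the coefficient bookkeeping in the second part: each of the five coefficients is a rational function in $q$ involving $\phi$ and $\delta$, and one must check that they collapse to the specific quantum-integer expressions on the right-hand side of \cref{qsquare}. This is entirely mechanical and is exactly the kind of identity the paper already delegates to SageMath, so no new conceptual ingredient is needed beyond the observation that assuming \cref{expanse} together with the specializations \cref{carpool,cherry} forces $z_1 = z_2$ and thus makes \cref{triangle,crate,bull} immediately applicable.
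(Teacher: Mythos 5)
Your proposal is correct and matches the paper's proof in both structure and content: \cref{F4triangle} comes from \cref{triangle} with $z_1=z_2=z$ and the SageMath identity $\frac{q^{12}-q^{-12}}{z}+\phi-1=-\frac{[9]}{[3]}$, while \cref{qsquare} comes from substituting \cref{bull+,bull-} into \cref{crate} and matching coefficients via SageMath. The only difference is presentational — you spell out the basis-conversion step $\poscross+\negcross=2\poscross-z(\jail-\hourglass+\Hmor-\Imor)$, which the paper leaves implicit in its appeal to SageMath.
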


\begin{proof}
    Direct computation using SageMath shows that
    \[
        \frac{q^{12}-q^{-12}}{z} + \phi - 1
        = - \frac{[9]}{[3]}.
    \]
    Then \cref{F4triangle} follows from \cref{triangle}.  The relation \cref{qsquare} also follows from \cref{crate,bull+,bull-} using SageMath.
\end{proof}

\begin{lem} \label{boots}
    The morphisms
    \begin{equation} \label{purple}
        \begin{tikzpicture}[centerzero]
            \draw (0,-0.15) -- (0.15,-0.3);
            \draw (-0.2,0.3) -- (-0.2,0.25) arc(180:360:0.2) -- (0.2,0.3);
            \draw[wipe] (0,0.3) -- (0,-0.15);
            \draw (0,0.3) -- (0,-0.15) -- (-0.15,-0.3);
        \end{tikzpicture}
        \ ,\quad
        \begin{tikzpicture}[centerzero]
            \draw (0,0) -- (-0.15,-0.3);
            \draw (0,0.3) to[out=-45,in=70] (0.15,-0.3);
            \draw[wipe] (0,0) -- (0.3,0.3);
            \draw (-0.3,0.3) -- (0,0) -- (0.3,0.3);
        \end{tikzpicture}
        \ ,\quad
        \begin{tikzpicture}[centerzero]
            \draw (0,0) -- (0.15,-0.3);
            \draw (0,0.3) to[out=225,in=110] (-0.15,-0.3);
            \draw[wipe] (0,0) -- (-0.3,0.3);
            \draw (0.3,0.3) -- (0,0) -- (-0.3,0.3);
        \end{tikzpicture}
        \ ,\quad
        \begin{tikzpicture}[centerzero]
            \draw (-0.3,0.3) -- (-0.15,0.15) -- (0,0.3);
            \draw (0.3,0.3) -- (-0.15,-0.3);
            \draw[wipe] (-0.15,0.15) -- (0.15,-0.3);
            \draw (-0.15,0.15) -- (0.15,-0.3);
        \end{tikzpicture}
        \ ,\quad
        \begin{tikzpicture}[centerzero]
            \draw (0.3,0.3) -- (0.15,0.15) -- (0,0.3);
            \draw (-0.3,0.3) -- (0.15,-0.3);
            \draw[wipe] (0.15,0.15) -- (-0.15,-0.3);
            \draw (0.15,0.15) -- (-0.15,-0.3);
        \end{tikzpicture}
        \ ,\quad
        \begin{tikzpicture}[anchorbase]
            \draw (-0.2,0) -- (0,0.25) -- (0.2,0);
            \draw (0,0.25) -- (0,0.4);
            \draw (0.2,-0.25) -- (-0.2,0) -- (-0.3,0.4);
            \draw[wipe] (-0.2,-0.25) -- (0.2,0);
            \draw (-0.2,-0.25) -- (0.2,0);
            \draw (0.2,0) -- (0.3,0.4);
        \end{tikzpicture}
        \ , \quad \text{and} \quad
        \begin{tikzpicture}[anchorbase]
            \draw (-0.2,0) -- (0,0.25) -- (0.2,0);
            \draw (0,0.25) -- (0,0.4);
            \draw (-0.2,-0.25) -- (0.2,0) -- (0.3,0.4);
            \draw[wipe] (0.2,-0.25) -- (-0.2,0);
            \draw (0.2,-0.25) -- (-0.2,0);
            \draw (-0.2,0) -- (-0.3,0.4);
        \end{tikzpicture}
    \end{equation}
    lie in the span of the morphisms
    \begin{equation} \label{5pointers}
        \begin{tikzpicture}[anchorbase]
            \draw (-0.2,0) -- (0,0.25) -- (0.2,0);
            \draw (0,0.25) -- (0,0.4);
            \draw (-0.2,-0.25) -- (-0.2,0) -- (-0.3,0.4);
            \draw (0.2,-0.25) -- (0.2,0) -- (0.3,0.4);
        \end{tikzpicture}
        \, ,\
        \begin{tikzpicture}[anchorbase]
            \draw (-0.3,0.3) -- (0,0) -- (0.3,0.3);
            \draw (0,0.3) -- (-0.15,0.15);
            \draw (0,0) -- (0,-0.15) -- (-0.15,-0.3);
            \draw (0,-0.15) -- (0.15,-0.3);
        \end{tikzpicture}
        \, ,\
        \begin{tikzpicture}[anchorbase]
            \draw (0.3,0.3) -- (0,0) -- (-0.3,0.3);
            \draw (0,0.3) -- (0.15,0.15);
            \draw (0,0) -- (0,-0.15) -- (0.15,-0.3);
            \draw (0,-0.15) -- (-0.15,-0.3);
        \end{tikzpicture}
        \, ,\
        \begin{tikzpicture}[anchorbase]
            \draw (-0.3,-0.3) -- (0.3,0.3);
            \draw (-0.3,0.3) -- (-0.15,-0.15);
            \draw (0,0.3) -- (0.15,0.15);
            \draw (0,0) -- (0.3,-0.3);
        \end{tikzpicture}
        \, ,\
        \begin{tikzpicture}[anchorbase]
            \draw (0.3,-0.3) -- (-0.3,0.3);
            \draw (0.3,0.3) -- (0.15,-0.15);
            \draw (0,0.3) -- (-0.15,0.15);
            \draw (0,0) -- (-0.3,-0.3);
        \end{tikzpicture}
        \, ,\
        \begin{tikzpicture}[centerzero]
            \draw (-0.15,-0.3) -- (-0.15,-0.23) arc(180:0:0.15) -- (0.15,-0.3);
            \draw (-0.3,0.3) -- (0,0.08) -- (0.3,0.3);
            \draw (0,0.3) -- (0,0.08);
        \end{tikzpicture}
        \, ,\
        \begin{tikzpicture}[centerzero]
            \draw (-0.2,-0.3) -- (-0.2,0.3);
            \draw (0,0.3) -- (0.15,0) -- (0.3,0.3);
            \draw (0.15,0) -- (0.15,-0.3);
        \end{tikzpicture}
        \, ,\
        \begin{tikzpicture}[centerzero]
            \draw (0.2,-0.3) -- (0.2,0.3);
            \draw (0,0.3) -- (-0.15,0) -- (-0.3,0.3);
            \draw (-0.15,0) -- (-0.15,-0.3);
        \end{tikzpicture}
        \, ,\
        \begin{tikzpicture}[centerzero]
            \draw (-0.3,0.3) -- (-0.3,0.23) arc(180:360:0.15) -- (0,0.3);
            \draw (0.3,0.3) -- (0.15,0) -- (-0.2,-0.3);
            \draw (0.2,-0.3) -- (0.15,0);
        \end{tikzpicture}
        \, ,\
        \begin{tikzpicture}[centerzero]
            \draw (0.3,0.3) -- (0.3,0.23) arc(360:180:0.15) -- (0,0.3);
            \draw (-0.3,0.3) -- (-0.15,0) -- (0.2,-0.3);
            \draw (-0.2,-0.3) -- (-0.15,0);
        \end{tikzpicture}
        \, ,\
        \begin{tikzpicture}[centerzero]
            \draw (0,0.3) -- (0,-0.15) -- (-0.15,-0.3);
            \draw (0,-0.15) -- (0.15,-0.3);
            \draw[wipe] (-0.2,0.3) -- (-0.2,0.25) arc(180:360:0.2) -- (0.2,0.3);
            \draw (-0.2,0.3) -- (-0.2,0.25) arc(180:360:0.2) -- (0.2,0.3);
        \end{tikzpicture}
        \, ,\
        \begin{tikzpicture}[centerzero]
            \draw (-0.3,0.3) -- (0,0) -- (0.3,0.3);
            \draw (0,0) -- (-0.15,-0.3);
            \draw[wipe] (0,0.3) to[out=-45,in=70] (0.15,-0.3);
            \draw (0,0.3) to[out=-45,in=70] (0.15,-0.3);
        \end{tikzpicture}
        \, ,\
        \begin{tikzpicture}[centerzero]
            \draw (0.3,0.3) -- (0,0) -- (-0.3,0.3);
            \draw (0,0) -- (0.15,-0.3);
            \draw[wipe] (0,0.3) to[out=225,in=110] (-0.15,-0.3);
            \draw (0,0.3) to[out=225,in=110] (-0.15,-0.3);
        \end{tikzpicture}
        \, ,\
        \begin{tikzpicture}[centerzero]
            \draw (-0.3,0.3) -- (-0.15,0.15) -- (0,0.3);
            \draw (-0.15,0.15) -- (0.15,-0.3);
            \draw[wipe] (0.3,0.3) -- (-0.15,-0.3);
            \draw (0.3,0.3) -- (-0.15,-0.3);
        \end{tikzpicture}
        \, ,\
        \begin{tikzpicture}[centerzero]
            \draw (0.3,0.3) -- (0.15,0.15) -- (0,0.3);
            \draw (0.15,0.15) -- (-0.15,-0.3);
            \draw[wipe] (-0.3,0.3) -- (0.15,-0.3);
            \draw (-0.3,0.3) -- (0.15,-0.3);
        \end{tikzpicture}
        .
    \end{equation}
\end{lem}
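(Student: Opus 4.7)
The plan is to apply the skein relation \cref{symskein}, established earlier via \cref{skeinexist} and \cref{zbalance}, in the form
\[
    \poscross = \negcross + z \left( \jail - \hourglass + \Hmor - \Imor \right),
\]
to resolve the unique crossing present in each of the seven morphisms in \cref{purple}. This rewrites each such morphism as its crossing-reversed analogue plus a $z$-linear combination of four planar diagrams, one for each non-crossing local replacement of the crossing.

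First I would verify that, for each morphism in \cref{purple}, its crossing-reversed analogue agrees---up to planar isotopy, invoking the pivotal structure afforded by \cref{vortex,topsy,turvy,pokey}---with one of the five crossing-containing morphisms appearing in the last five positions of \cref{5pointers}. For instance, reversing the crossing in the second diagram of \cref{purple} produces the twelfth diagram of \cref{5pointers}, and analogous matchings hold for the remaining six.

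Next, for each of the four non-crossing local resolutions, I would simplify the resulting planar diagram using planar isotopy together with the triangle identity \cref{F4triangle}, the bubble and bigon relations in \cref{chess}, and the cap/cup relations in \cref{vortex}. Each such resolved diagram collapses to a linear combination of the ten non-crossing morphisms occupying the first ten positions of \cref{5pointers}; in particular, the ``hourglass'' resolution typically produces a cap--cup configuration on two of the five external endpoints adjacent to a surviving Y-shape, while the $\Hmor$ and $\Imor$ resolutions produce planar trees with three trivalent vertices. Combining the two steps, every morphism in \cref{purple} is expressed as a linear combination of the morphisms in \cref{5pointers}.

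The main obstacle will be bookkeeping: the seven morphisms of \cref{purple} give rise to a moderate number of local resolutions, and for each one must carefully track the relative positions of trivalent vertices and endpoints in the resulting planar diagram in order to identify it (up to isotopy) with one of the morphisms in \cref{5pointers}. Each individual case is a routine diagrammatic manipulation, and the entire verification is amenable to the SageMath computations referenced in the paper's introduction.
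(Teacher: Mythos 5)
Your approach works for the first five morphisms of \cref{purple}, and this matches the paper: applying the skein relation \cref{skein} to the unique crossing in each of these five diagrams produces the opposite-crossing analogue (which, up to planar isotopy, is one of the eleventh through fifteenth morphisms of \cref{5pointers}) plus a $z$-linear combination of planar resolutions lying in the span of the first ten morphisms of \cref{5pointers}.

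However, there is a genuine gap in your treatment of the sixth and seventh morphisms of \cref{purple}. You assert that for all seven morphisms, the crossing-reversed analogue agrees up to planar isotopy with one of the crossing-containing morphisms in \cref{5pointers}. This is false for the last two: reversing the crossing in the sixth morphism of \cref{purple} produces the seventh, and vice versa — these two diagrams each contain three trivalent vertices and one crossing, whereas the last five morphisms of \cref{5pointers} each contain only one trivalent vertex, so no isotopy can identify them. Consequently, the skein relation applied to the sixth morphism yields only $(\text{sixth}) - (\text{seventh}) \in X$, where $X$ is the span of \cref{5pointers}; applying $\Xi$ or rotating gives no new information, so you cannot conclude that either morphism individually lies in $X$. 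The paper circumvents this by a different manipulation: it slides the crossing through a trivalent vertex to produce $\posdotcross$, expands via \cref{bull+}, and after simplification obtains a relation of the form $(\text{sixth}) + q^4\,\Xi\Rot(\text{sixth}) \in X$. The key point is that the coefficient $q^4$ (rather than $\pm1$) breaks the symmetry: applying the operator $1 - q^4 \Xi\Rot + q^8(\Xi\Rot)^2 - q^{12}(\Xi\Rot)^3 + q^{16}(\Xi\Rot)^4$ and then $\Xi$ yields two independent relations, $(\text{sixth}) + q^{20}(\text{seventh}) \in X$ and $q^{20}(\text{sixth}) + (\text{seventh}) \in X$, from which one solves for both morphisms separately. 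Your skein-resolution idea alone cannot produce a coefficient other than $\pm1$ on the crossing-reversed diagram, so the separation step is unavailable without bringing in the $\posdotcross$ expansion as the paper does.
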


\begin{proof}
    Let $X$ denote the span of the morphisms \cref{5pointers}.  The fact that the first five morphisms in \cref{purple} lie in $X$ follows immediately from \cref{skein}.  Next, note that
    \begin{multline*}
        \begin{tikzpicture}[anchorbase]
            \draw (-0.2,0) -- (0,0.25) -- (0.2,0);
            \draw (0,0.25) -- (0,0.4);
            \draw (0.2,-0.25) -- (-0.2,0) -- (-0.3,0.4);
            \draw[wipe] (-0.2,-0.25) -- (0.2,0);
            \draw (-0.2,-0.25) -- (0.2,0);
            \draw (0.2,0) -- (0.3,0.4);
        \end{tikzpicture}
        \overset{\cref{venom}}{=}
        \begin{tikzpicture}[anchorbase]
            \draw (-0.2,0) to[out=-45,in=180] (0.3,0.25) to[out=0,in=45] (0.2,-0.25);
            \draw[wipe] (0,0.25) -- (0.2,0) -- (0.3,0.4);
            \draw (0,0.25) -- (0.2,0) -- (0.3,0.4);
            \draw (0,0.25) -- (0,0.4);
            \draw (-0.2,0) -- (0,0.25);
            \draw (-0.2,0) -- (-0.3,0.4);
            \draw (-0.2,-0.25) -- (0.2,0);
        \end{tikzpicture}
        \overset{\cref{nuclear}}{=} q^6
        \begin{tikzpicture}[anchorbase]
            \draw (-0.2,0.2) -- (-0.4,0.4);
            \draw (-0.2,0.2) -- (0,0.4);
            \draw (-0.2,0.2) to[out=-45,in=up] (0.4,-0.4);
            \draw[wipe] (-0.1,-0.2) -- (0.4,0.4);
            \draw (-0.1,-0.2) -- (0.4,0.4);
            \draw (-0.29,0.11) to[out=225,in=135] (-0.1,-0.2) -- (-0.4,-0.4);
            \posdot{-0.2,0.2};
        \end{tikzpicture}
        \overset{\cref{bull+}}{=} \frac{q^6}{q^2+q^{-2}}
        \left(
            q^{-8}\,
            \begin{tikzpicture}[centerzero]
                \draw (0,0) -- (-0.15,-0.3);
                \draw (0,0.3) to[out=-45,in=70] (0.15,-0.3);
                \draw[wipe] (0,0) -- (0.3,0.3);
                \draw (-0.3,0.3) -- (0,0) -- (0.3,0.3);
            \end{tikzpicture}
            + q^8\,
            \begin{tikzpicture}[centerzero,scale=0.75]
                \draw (0,0.4) arc (360:180:0.2);
                \draw (0,0.1) to[out=0,in=110] (0.4,-0.4);
                \draw[wipe] (-0.1,-0.2) -- (0.4,0.4);
                \draw (-0.1,-0.2) -- (0.4,0.4);
                \draw (-0.1,-0.2) to[out=110,in=180] (0,0.1);
                \draw (-0.4,-0.4) -- (-0.1,-0.2);
            \end{tikzpicture}
            +
            \begin{tikzpicture}[centerzero]
                \draw (0.3,0.3) -- (0.15,0.15) -- (0,0.3);
                \draw (-0.3,0.3) -- (0.15,-0.3);
                \draw[wipe] (0.15,0.15) -- (-0.15,-0.3);
                \draw (0.15,0.15) -- (-0.15,-0.3);
            \end{tikzpicture}
        \right)
        - q^4\,
        \begin{tikzpicture}[anchorbase]
            \draw (-0.3,0.4) -- (-0.2,0.2) -- (-0.05,0.2) -- (0,0.4);
            \draw (-0.05,0.2) -- (0.3,-0.25);
            \draw[wipe] (-0.2,-0.1) to[out=20,in=-100] (0.3,0.4);
            \draw (-0.2,-0.1) to[out=20,in=-100] (0.3,0.4);
            \draw (-0.2,0.2) -- (-0.2,-0.1) -- (-0.3,-0.25);
        \end{tikzpicture}
        - q^8\,
        \begin{tikzpicture}[anchorbase,scale=0.75]
            \draw (-0.4,0.4) -- (-0.2,0.2) -- (0,0.4);
            \draw (-0.2,0.2) -- (-0.2,0) to[out=0,in=100] (0.4,-0.4);
            \draw[wipe] (-0.1,-0.2) -- (0.4,0.4);
            \draw (-0.1,-0.2) -- (0.4,0.4);
            \draw (-0.2,0) -- (-0.1,-0.2) -- (-0.4,-0.4);
        \end{tikzpicture}
        \\
        \overset{\cref{turvy}}{\underset{\cref{radioactive}}{=}}
        \frac{q^6}{q^2+q^{-2}}
        \left(
            q^{-8}\,
            \begin{tikzpicture}[centerzero]
                \draw (0,0) -- (-0.15,-0.3);
                \draw (0,0.3) to[out=-45,in=70] (0.15,-0.3);
                \draw[wipe] (0,0) -- (0.3,0.3);
                \draw (-0.3,0.3) -- (0,0) -- (0.3,0.3);
            \end{tikzpicture}
            + q^{-4}\,
            \begin{tikzpicture}[centerzero]
                \draw (-0.3,0.3) -- (-0.3,0.23) arc(180:360:0.15) -- (0,0.3);
                \draw (0.3,0.3) -- (0.15,0) -- (-0.2,-0.3);
                \draw (0.2,-0.3) -- (0.15,0);
            \end{tikzpicture}
            +
            \begin{tikzpicture}[centerzero]
                \draw (0.3,0.3) -- (0.15,0.15) -- (0,0.3);
                \draw (-0.3,0.3) -- (0.15,-0.3);
                \draw[wipe] (0.15,0.15) -- (-0.15,-0.3);
                \draw (0.15,0.15) -- (-0.15,-0.3);
            \end{tikzpicture}
        \right)
        - q^4\,
        \begin{tikzpicture}[anchorbase]
            \draw (-0.3,0.4) -- (-0.2,0.2) -- (-0.05,0.2) -- (0,0.4);
            \draw (-0.05,0.2) -- (0.3,-0.25);
            \draw[wipe] (-0.2,-0.1) to[out=20,in=-100] (0.3,0.4);
            \draw (-0.2,-0.1) to[out=20,in=-100] (0.3,0.4);
            \draw (-0.2,0.2) -- (-0.2,-0.1) -- (-0.3,-0.25);
        \end{tikzpicture}
        - q^2\,
        \begin{tikzpicture}[anchorbase]
            \draw (0.14,0.04) -- (0.3,0.3);
            \draw (0.14,-0.14) -- (0.3,-0.3);
            \draw (-0.04,-0.14) -- (-0.3,-0.3);
            \draw (-0.04,0.04) -- (-0.15,0.15) -- (-0.3,0.3);
            \draw (-0.15,0.15) -- (0,0.3);
            \posdot{0.05,-0.05};
        \end{tikzpicture}.
    \end{multline*}
    Using \cref{bull+}, we see that the last morphism appearing above is in $X$.  Thus we have
    \[
        \begin{tikzpicture}[anchorbase]
            \draw (-0.2,0) -- (0,0.25) -- (0.2,0);
            \draw (0,0.25) -- (0,0.4);
            \draw (0.2,-0.25) -- (-0.2,0) -- (-0.3,0.4);
            \draw[wipe] (-0.2,-0.25) -- (0.2,0);
            \draw (-0.2,-0.25) -- (0.2,0);
            \draw (0.2,0) -- (0.3,0.4);
        \end{tikzpicture}
        + q^4\,
        \begin{tikzpicture}[anchorbase]
            \draw (-0.3,0.4) -- (-0.2,0.2) -- (-0.05,0.2) -- (0,0.4);
            \draw (-0.05,0.2) -- (0.3,-0.25);
            \draw[wipe] (-0.2,-0.1) to[out=20,in=-100] (0.3,0.4);
            \draw (-0.2,-0.1) to[out=20,in=-100] (0.3,0.4);
            \draw (-0.2,0.2) -- (-0.2,-0.1) -- (-0.3,-0.25);
        \end{tikzpicture}
        \in X.
    \]
    Note that the rightmost diagram above is $\Xi \Rot$ applied to the leftmost diagram.  Since $X$ is invariant under the action of $\Rot$ and $\Xi$ (using the skein relation \cref{skein}), we have
    \[
        \begin{tikzpicture}[anchorbase]
            \draw (-0.2,0) -- (0,0.25) -- (0.2,0);
            \draw (0,0.25) -- (0,0.4);
            \draw (0.2,-0.25) -- (-0.2,0) -- (-0.3,0.4);
            \draw[wipe] (-0.2,-0.25) -- (0.2,0);
            \draw (-0.2,-0.25) -- (0.2,0);
            \draw (0.2,0) -- (0.3,0.4);
        \end{tikzpicture}
        + q^{20}\,
        \begin{tikzpicture}[anchorbase]
            \draw (-0.2,0) -- (0,0.25) -- (0.2,0);
            \draw (0,0.25) -- (0,0.4);
            \draw (-0.2,-0.25) -- (0.2,0) -- (0.3,0.4);
            \draw[wipe] (0.2,-0.25) -- (-0.2,0);
            \draw (0.2,-0.25) -- (-0.2,0);
            \draw (-0.2,0) -- (-0.3,0.4);
        \end{tikzpicture}
        = \left( 1 - q^4 \Xi \Rot + q^8 \Xi^2 \Rot^2 - q^{12} \Xi^3 \Rot^3 + q^{16} \Xi^4 \Rot^4 \right)
        \left(
            \begin{tikzpicture}[anchorbase]
                \draw (-0.2,0) -- (0,0.25) -- (0.2,0);
                \draw (0,0.25) -- (0,0.4);
                \draw (0.2,-0.25) -- (-0.2,0) -- (-0.3,0.4);
                \draw[wipe] (-0.2,-0.25) -- (0.2,0);
                \draw (-0.2,-0.25) -- (0.2,0);
                \draw (0.2,0) -- (0.3,0.4);
            \end{tikzpicture}
            + q^4\,
            \begin{tikzpicture}[anchorbase]
                \draw (-0.3,0.4) -- (-0.2,0.2) -- (-0.05,0.2) -- (0,0.4);
                \draw (-0.05,0.2) -- (0.3,-0.25);
                \draw[wipe] (-0.2,-0.1) to[out=20,in=-100] (0.3,0.4);
                \draw (-0.2,-0.1) to[out=20,in=-100] (0.3,0.4);
                \draw (-0.2,0.2) -- (-0.2,-0.1) -- (-0.3,-0.25);
            \end{tikzpicture}
        \right)
        \in X.
    \]
    Applying $\Xi$ shows that
    \[
        \begin{tikzpicture}[anchorbase]
            \draw (-0.2,0) -- (0,0.25) -- (0.2,0);
            \draw (0,0.25) -- (0,0.4);
            \draw (-0.2,-0.25) -- (0.2,0) -- (0.3,0.4);
            \draw[wipe] (0.2,-0.25) -- (-0.2,0);
            \draw (0.2,-0.25) -- (-0.2,0);
            \draw (-0.2,0) -- (-0.3,0.4);
        \end{tikzpicture}
        +
        q^{-20}\,
        \begin{tikzpicture}[anchorbase]
            \draw (-0.2,0) -- (0,0.25) -- (0.2,0);
            \draw (0,0.25) -- (0,0.4);
            \draw (0.2,-0.25) -- (-0.2,0) -- (-0.3,0.4);
            \draw[wipe] (-0.2,-0.25) -- (0.2,0);
            \draw (-0.2,-0.25) -- (0.2,0);
            \draw (0.2,0) -- (0.3,0.4);
        \end{tikzpicture}
        \in X.
    \]
    Thus the last two morphisms in \cref{purple} lie in $X$.
\end{proof}

The following proposition, whose hypotheses may seem ad hoc at first, will be used in the proof of \cref{lightning}; see \cref{window}.

\begin{prop} \label{protomolecule}
    Suppose that
    \begin{equation} \label{5dim15}
        \dim \left( (\Tcat/\cI)(\go^{\otimes 3}, \go^{\otimes 2}) \right) \le 15,
    \end{equation}
    and that the last 10 morphisms in \cref{5pointers} are linearly independent.  Then relation \cref{qpent} holds in $\Tcat/\cI$.
\end{prop}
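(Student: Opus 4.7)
The plan is to use the dimension bound \cref{5dim15} to produce a linear relation involving $\pentmor$, then to exploit the rotational symmetry of the pentagon together with composition against test morphisms to pin down the specific coefficients appearing in \cref{qpent}.

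First, by \cref{twirl} we have $(\Tcat/\cI)(\go^{\otimes 2}, \go^{\otimes 3}) \cong (\Tcat/\cI)(\go^{\otimes 3}, \go^{\otimes 2})$, so this space has dimension at most $15$. The fifteen morphisms of \cref{5pointers} together with $\pentmor$ thus constitute sixteen morphisms in this space, forcing a nontrivial linear dependence. I would then argue that the coefficient of $\pentmor$ in any such dependence is nonzero: the hypothesis gives linear independence of the last ten morphisms, and \cref{boots} places the additional morphisms of \cref{purple} inside the span of the fifteen. A dimension count combined with the bound $\dim \le 15$ then shows the span of the fifteen morphisms is all of $(\Tcat/\cI)(\go^{\otimes 2}, \go^{\otimes 3})$, so that $\pentmor$ must lie in this span.

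The next step is to write $\pentmor = \sum_{i=1}^{15} a_i M_i$, where the $M_i$ denote the fifteen morphisms of \cref{5pointers}, and to exploit the topological $5$-fold rotational symmetry $\Rot(\pentmor) = \pentmor$ of the pentagon. The rotation operator $\Rot$ permutes the fifteen $M_i$ as three disjoint $5$-cycles, one for each of the three groups displayed in the right-hand side of \cref{qpent}. Since the expression of $\pentmor$ in the basis is unique and $\pentmor$ is $\Rot$-fixed, the $a_i$ are constant on each $\Rot$-orbit, reducing the unknowns to three scalars $\lambda_1, \lambda_2, \lambda_3$.

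Finally, to determine $\lambda_1, \lambda_2, \lambda_3$ I would compose the putative relation with specific caps and cups to project into the $2$-point endomorphism algebra, where the basis \cref{IPA} is available. Applying \cref{vortex,venom,checkers,F4triangle,qsquare,bull+,bull-} reduces each composed diagram to an explicit combination of $\jail$, $\hourglass$, $\Hmor$, $\Imor$, $\poscross$, and matching coefficients on each side produces a linear system in $\lambda_1, \lambda_2, \lambda_3$ whose unique solution yields the values $-1$, $-[7]/[4]^2$, and $[3]^2/[4]^2$. The main obstacle is this last computational step: the diagram reductions require heavy use of the relations of \cref{sec:2point}, and verifying that the resulting rational functions of $q$ match the coefficients in \cref{qpent} is the natural place to invoke SageMath, as was done in \cref{bagel,zbalance}.
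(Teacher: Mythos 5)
Your outline matches the paper in two places: the reduction of the unknowns to three $\Rot$-orbit scalars via the pentagon's rotational invariance, and the final determination of those scalars by composing against a test morphism (the paper uses $\mergemor \otimes 1_\go$ on top) and solving the resulting linear system in the $2$-point algebra with SageMath. The gap is in the step where you conclude that $\pentmor$ lies in the span of the fifteen morphisms of \cref{5pointers}.

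Your argument is that ``a dimension count combined with $\dim \le 15$ shows the span of the fifteen morphisms is all of $(\Tcat/\cI)(\go^{\otimes 2}, \go^{\otimes 3})$.'' This does not follow from the ingredients you cite. The hypothesis gives linear independence of only ten of the fifteen, so the span has dimension somewhere between $10$ and $15$; the dimension bound $\dim \le 15$ is consistent with the span being a proper subspace; and \cref{boots} merely places the seven extra morphisms of \cref{purple} \emph{inside} that span, which gives no lower bound on its dimension. In particular, your earlier claim that ``the coefficient of $\pentmor$ in any such dependence is nonzero'' is false as stated: if the fifteen are themselves linearly dependent, there is a nontrivial dependence with pentagon coefficient zero. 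The paper handles exactly this case with a separate argument: rotate so the first of the fifteen has nonzero coefficient, compose the dependence relation on the bottom with $\Hmor$ so that the first term becomes a pentagon, then use \cref{chess,F4triangle,qsquare,cow+,boots} to re-express the other composed terms in the span of the fifteen, and finally sum over rotations to solve for the pentagon. This is the actual role of \cref{boots} in the proof --- controlling the composed terms, not establishing that the fifteen span. Your proposal is missing this entire branch of the argument.
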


\begin{proof}
    We first show that we have a relation of the form
    \begin{equation} \label{robot}
        \begin{multlined}
            \pentmor =
            \gamma_1
            \left(
                \begin{tikzpicture}[anchorbase]
                    \draw (-0.2,0) -- (0,0.25) -- (0.2,0);
                    \draw (0,0.25) -- (0,0.4);
                    \draw (-0.2,-0.25) -- (-0.2,0) -- (-0.3,0.4);
                    \draw (0.2,-0.25) -- (0.2,0) -- (0.3,0.4);
                \end{tikzpicture}
                +
                \begin{tikzpicture}[anchorbase]
                    \draw (-0.3,0.3) -- (0,0) -- (0.3,0.3);
                    \draw (0,0.3) -- (-0.15,0.15);
                    \draw (0,0) -- (0,-0.15) -- (-0.15,-0.3);
                    \draw (0,-0.15) -- (0.15,-0.3);
                \end{tikzpicture}
                +
                \begin{tikzpicture}[anchorbase]
                    \draw (0.3,0.3) -- (0,0) -- (-0.3,0.3);
                    \draw (0,0.3) -- (0.15,0.15);
                    \draw (0,0) -- (0,-0.15) -- (0.15,-0.3);
                    \draw (0,-0.15) -- (-0.15,-0.3);
                \end{tikzpicture}
                +
                \begin{tikzpicture}[anchorbase]
                    \draw (-0.3,-0.3) -- (0.3,0.3);
                    \draw (-0.3,0.3) -- (-0.15,-0.15);
                    \draw (0,0.3) -- (0.15,0.15);
                    \draw (0,0) -- (0.3,-0.3);
                \end{tikzpicture}
                +
                \begin{tikzpicture}[anchorbase]
                    \draw (0.3,-0.3) -- (-0.3,0.3);
                    \draw (0.3,0.3) -- (0.15,-0.15);
                    \draw (0,0.3) -- (-0.15,0.15);
                    \draw (0,0) -- (-0.3,-0.3);
                \end{tikzpicture}
            \right)
            + \gamma_2
            \left(
                \begin{tikzpicture}[centerzero]
                    \draw (-0.15,-0.3) -- (-0.15,-0.23) arc(180:0:0.15) -- (0.15,-0.3);
                    \draw (-0.3,0.3) -- (0,0.08) -- (0.3,0.3);
                    \draw (0,0.3) -- (0,0.08);
                \end{tikzpicture}
                +
                \begin{tikzpicture}[centerzero]
                    \draw (-0.2,-0.3) -- (-0.2,0.3);
                    \draw (0,0.3) -- (0.15,0) -- (0.3,0.3);
                    \draw (0.15,0) -- (0.15,-0.3);
                \end{tikzpicture}
                +
                \begin{tikzpicture}[centerzero]
                    \draw (0.2,-0.3) -- (0.2,0.3);
                    \draw (0,0.3) -- (-0.15,0) -- (-0.3,0.3);
                    \draw (-0.15,0) -- (-0.15,-0.3);
                \end{tikzpicture}
                +
                \begin{tikzpicture}[centerzero]
                    \draw (-0.3,0.3) -- (-0.3,0.23) arc(180:360:0.15) -- (0,0.3);
                    \draw (0.3,0.3) -- (0.15,0) -- (-0.2,-0.3);
                    \draw (0.2,-0.3) -- (0.15,0);
                \end{tikzpicture}
                +
                \begin{tikzpicture}[centerzero]
                    \draw (0.3,0.3) -- (0.3,0.23) arc(360:180:0.15) -- (0,0.3);
                    \draw (-0.3,0.3) -- (-0.15,0) -- (0.2,-0.3);
                    \draw (-0.2,-0.3) -- (-0.15,0);
                \end{tikzpicture}
            \right)
            \\
            + \gamma_3
            \left(
                \begin{tikzpicture}[centerzero]
                    \draw (0,0.3) -- (0,-0.15) -- (-0.15,-0.3);
                    \draw (0,-0.15) -- (0.15,-0.3);
                    \draw[wipe] (-0.2,0.3) -- (-0.2,0.25) arc(180:360:0.2) -- (0.2,0.3);
                    \draw (-0.2,0.3) -- (-0.2,0.25) arc(180:360:0.2) -- (0.2,0.3);
                \end{tikzpicture}
                +
                \begin{tikzpicture}[centerzero]
                    \draw (-0.3,0.3) -- (0,0) -- (0.3,0.3);
                    \draw (0,0) -- (-0.15,-0.3);
                    \draw[wipe] (0,0.3) to[out=-45,in=70] (0.15,-0.3);
                    \draw (0,0.3) to[out=-45,in=70] (0.15,-0.3);
                \end{tikzpicture}
                +
                \begin{tikzpicture}[centerzero]
                    \draw (0.3,0.3) -- (0,0) -- (-0.3,0.3);
                    \draw (0,0) -- (0.15,-0.3);
                    \draw[wipe] (0,0.3) to[out=225,in=110] (-0.15,-0.3);
                    \draw (0,0.3) to[out=225,in=110] (-0.15,-0.3);
                \end{tikzpicture}
                +
                \begin{tikzpicture}[centerzero]
                    \draw (-0.3,0.3) -- (-0.15,0.15) -- (0,0.3);
                    \draw (-0.15,0.15) -- (0.15,-0.3);
                    \draw[wipe] (0.3,0.3) -- (-0.15,-0.3);
                    \draw (0.3,0.3) -- (-0.15,-0.3);
                \end{tikzpicture}
                +
                \begin{tikzpicture}[centerzero]
                    \draw (0.3,0.3) -- (0.15,0.15) -- (0,0.3);
                    \draw (0.15,0.15) -- (-0.15,-0.3);
                    \draw[wipe] (-0.3,0.3) -- (0.15,-0.3);
                    \draw (-0.3,0.3) -- (0.15,-0.3);
                \end{tikzpicture}
            \right).
        \end{multlined}
    \end{equation}
    If the morphisms \cref{5pointers} are linearly independent, then this follows immediately from \cref{5dim15} and the fact that the pentagon is invariant under rotation.  So we suppose there is a linear dependence relation involving the morphisms \cref{5pointers}.  Since the last 10 of these morphisms are linearly independent by assumption, it must be the case that the coefficient of one of the first five morphisms is nonzero.  Then we can rotate to assume that the coefficient of the first morphism is nonzero.  We then compose on the bottom with $\Hmor$ to obtain a pentagon from the first term.  Using \cref{chess,F4triangle,qsquare,cow+,boots}, the other terms can be written as linear combinations of the morphisms in \cref{5pointers}.  Summing over all rotations and solving for the pentagon then yields a relation of the form \cref{robot}.

    So now we have a relation of the form \cref{robot}.  Composing \cref{robot} on the top with $\mergemor\ \idstrand$ and using \cref{chess,F4triangle} gives
    \begin{multline*}
        - \frac{[9]}{[3]} \sqmor
        = \gamma_1 \left( \left( \phi - \frac{[9]}{[3]} \right) \left( \Hmor + \Imor \right) + \sqmor \right)
        + \gamma_2 \left( \phi \left(\, \jail + \hourglass\, \right) + \Hmor + \Imor \right)
        \\
        + \gamma_3 \left( q^{12}\, \Hmor + q^{-12}\, \Imor + \phi \poscross + (q^6 + q^{-6})\, \negdotcross \right).
    \end{multline*}
    Using \cref{qsquare,bull-} to write everything in terms of the morphisms \cref{slamdunk}, the coefficient of each of these morphisms must be zero by \cref{stout}.  This yields five equations in $\gamma_1, \gamma_2, \gamma_3$.  Using SageMath to solve these equations recovers the coefficients appearing in \cref{qpent}.
\end{proof}

\begin{rem}
    While it may appear as though we have made a choice of crossings in the last five terms of \cref{qpent}, a straightforward computation using only the skein relation \cref{skein} shows that
    \[
        \begin{tikzpicture}[centerzero]
            \draw (0,0.3) -- (0,-0.15) -- (-0.15,-0.3);
            \draw (0,-0.15) -- (0.15,-0.3);
            \draw[wipe] (-0.2,0.3) -- (-0.2,0.25) arc(180:360:0.2) -- (0.2,0.3);
            \draw (-0.2,0.3) -- (-0.2,0.25) arc(180:360:0.2) -- (0.2,0.3);
        \end{tikzpicture}
        +
        \begin{tikzpicture}[centerzero]
            \draw (-0.3,0.3) -- (0,0) -- (0.3,0.3);
            \draw (0,0) -- (-0.15,-0.3);
            \draw[wipe] (0,0.3) to[out=-45,in=70] (0.15,-0.3);
            \draw (0,0.3) to[out=-45,in=70] (0.15,-0.3);
        \end{tikzpicture}
        +
        \begin{tikzpicture}[centerzero]
            \draw (0.3,0.3) -- (0,0) -- (-0.3,0.3);
            \draw (0,0) -- (0.15,-0.3);
            \draw[wipe] (0,0.3) to[out=225,in=110] (-0.15,-0.3);
            \draw (0,0.3) to[out=225,in=110] (-0.15,-0.3);
        \end{tikzpicture}
        +
        \begin{tikzpicture}[centerzero]
            \draw (-0.3,0.3) -- (-0.15,0.15) -- (0,0.3);
            \draw (-0.15,0.15) -- (0.15,-0.3);
            \draw[wipe] (0.3,0.3) -- (-0.15,-0.3);
            \draw (0.3,0.3) -- (-0.15,-0.3);
        \end{tikzpicture}
        +
        \begin{tikzpicture}[centerzero]
            \draw (0.3,0.3) -- (0.15,0.15) -- (0,0.3);
            \draw (0.15,0.15) -- (-0.15,-0.3);
            \draw[wipe] (-0.3,0.3) -- (0.15,-0.3);
            \draw (-0.3,0.3) -- (0.15,-0.3);
        \end{tikzpicture}
        =
        \begin{tikzpicture}[centerzero]
            \draw (0,-0.15) -- (0.15,-0.3);
            \draw (-0.2,0.3) -- (-0.2,0.25) arc(180:360:0.2) -- (0.2,0.3);
            \draw[wipe] (0,0.3) -- (0,-0.15);
            \draw (0,0.3) -- (0,-0.15) -- (-0.15,-0.3);
        \end{tikzpicture}
        +
        \begin{tikzpicture}[centerzero]
            \draw (0,0) -- (-0.15,-0.3);
            \draw (0,0.3) to[out=-45,in=70] (0.15,-0.3);
            \draw[wipe] (0,0) -- (0.3,0.3);
            \draw (-0.3,0.3) -- (0,0) -- (0.3,0.3);
        \end{tikzpicture}
        +
        \begin{tikzpicture}[centerzero]
            \draw (0,0) -- (0.15,-0.3);
            \draw (0,0.3) to[out=225,in=110] (-0.15,-0.3);
            \draw[wipe] (0,0) -- (-0.3,0.3);
            \draw (0.3,0.3) -- (0,0) -- (-0.3,0.3);
        \end{tikzpicture}
        +
        \begin{tikzpicture}[centerzero]
            \draw (-0.3,0.3) -- (-0.15,0.15) -- (0,0.3);
            \draw (0.3,0.3) -- (-0.15,-0.3);
            \draw[wipe] (-0.15,0.15) -- (0.15,-0.3);
            \draw (-0.15,0.15) -- (0.15,-0.3);
        \end{tikzpicture}
        +
        \begin{tikzpicture}[centerzero]
            \draw (0.3,0.3) -- (0.15,0.15) -- (0,0.3);
            \draw (-0.3,0.3) -- (0.15,-0.3);
            \draw[wipe] (0.15,0.15) -- (-0.15,-0.3);
            \draw (0.15,0.15) -- (-0.15,-0.3);
        \end{tikzpicture}
        .
    \]
\end{rem}

\section{The quantized enveloping algebra of type $F_4$\label{sec:F4}}

In this section we collect some important facts about the quantized enveloping algebra of type $F_4$ that we will use to define a functor from $\Fcat_q$ to the category of finite-dimensional modules over this algebra.  For basic definitions and facts, we follow the presentations of \cite[Ch.~4, 5]{Jan96} and \cite[Ch.~9, 10]{CP95}.  While none of the material in this section is new, we provide proofs of a few results for which we had difficulty finding good references in the literature.

Consider the following labeling of the nodes of the Dynkin diagram of type $F_4$:
\[
    \begin{tikzpicture}[centerzero]
        \draw (0,0) -- (1,0);
        \draw (2,0) -- (3,0);
        \draw[style=double,double distance=2pt] (1,0) -- (2,0);
        \draw[style=double,double distance=2pt,-{Classical TikZ Rightarrow[length=3mm,width=4mm]}] (1,0) -- (1.65,0);
        \filldraw (0,0) circle (2pt) node[anchor=south] {$1$};
        \filldraw (1,0) circle (2pt) node[anchor=south] {$2$};
        \filldraw (2,0) circle (2pt) node[anchor=south] {$3$};
        \filldraw (3,0) circle (2pt) node[anchor=south] {$4$};
    \end{tikzpicture}
\]
Let $\alpha_i$, $i \in I := \{1,2,3,4\}$, be the corresponding simple roots.  There is a unique inner product $(\ ,\ )$ on the real vector space $\R \Phi$ generated by the root system $\Phi$ such that $(\alpha,\alpha)=2$ for all short roots.  Thus, we have
\[
    (\alpha_1,\alpha_1) = (\alpha_2,\alpha_2) = 4,\qquad
    (\alpha_3,\alpha_3) = (\alpha_4,\alpha_4) = 2.
\]
For $i,j \in I$, we have
\begin{equation} \label{Cartan}
    \frac{2(\alpha_i, \alpha_j)}{(\alpha_i, \alpha_i)} = a_{ij},\quad \text{where }
    [a_{ij}] =
    \begin{bmatrix}
        2 & -1 & 0 & 0 \\
        -1 & 2 & -2 & 0 \\
        0 & -1 & 2 & -1 \\
        0 & 0 & -1 & 2
    \end{bmatrix}
\end{equation}
is the corresponding Cartan matrix.  In what follows, we will write $(i,j)$ for $(\alpha_i, \alpha_j)$, $i,j \in I$.  For each $i \in I$, define
\[
    d_i = \frac{(i,i)}{2},\quad
    \text{so that } d_1 = d_2 = 2,\ d_3 = d_4 = 1.
\]
Let $\Phi_+$ denote the set of positive roots and let
\[
    \rho = \frac{1}{2} \sum_{\gamma \in \Phi_+} \gamma.
\]
Then we have
\[
    (2 \rho, \alpha_i) = (\alpha_i, \alpha_i), \quad i \in I.
\]

Let $J$ denote the maximal ideal of $\Q[q,q^{-1}]$ generated by $q-1$.  Equivalently, $J$ is the kernel of the $\Q$-algebra homomorphism $\Q[q,q^{-1}] \to \Q$, $q \mapsto 1$, which we use to endow $\C$ with the structure of an $A$-module, where
\begin{equation} \label{Adef}
    A := \left\{ \frac{a}{b} : a,b \in \Q[q,q^{-1}],\ b \notin J \right\} \subseteq \Q(q)
\end{equation}
is the localization of $\Q[q,q^{-1}]$ at the ideal $J$.  Since $J$ is also the maximal ideal of $\Q[q,q^{-1}]$ generated by $q^{-1}-1$, the map
\[
    \bar{\ } \colon \Q[q,q^{-1}] \to \Q[q,q^{-1}],\quad \overline{q^{\pm 1}} = q^{\mp 1},
\]
induces a $\Q$-algebra involution of $A$.  The group of units of $A$ is
\[
    A^\times = \left\{ \frac{a}{b} : a,b \in \Q[q,q^{-1}],\ a,b \notin J \right\},
\]
and $A$ is a local ring with unique maximal ideal $(q-1)A$.  Every nonzero element of $A$ is of the form $a (q-1)^n$ for $a \in A^\times$, $n \in \N$.  In particular, $A$ is a principal ideal domain.

\begin{lem} \label{plug}
    If $M$ is a finitely-generated $A$-module such that $\dim_{\Q(q)} M \otimes_A \Q(q) = \dim_\C M \otimes_A \C$, then $M$ is a free $A$-module and $\rank_A M = \dim_\C M \otimes_A \C$.
\end{lem}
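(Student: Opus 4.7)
The plan is to invoke the structure theorem for finitely-generated modules over a principal ideal domain. As noted in the excerpt, $A$ is a local PID (in fact, a discrete valuation ring) with unique maximal ideal $(q-1)A$, and every nonzero element has the form $a(q-1)^n$ with $a \in A^\times$ and $n \in \N$. Consequently, any finitely-generated $A$-module decomposes as
\[
    M \cong A^r \oplus \bigoplus_{i=1}^s A/(q-1)^{k_i} A
\]
for some $r, s \in \N$ and positive integers $k_1, \dots, k_s$.

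Next I would compute each of the two dimensions appearing in the hypothesis in terms of $r$ and $s$. For the first, since $q-1$ is invertible in $\Q(q)$, every torsion summand becomes zero after tensoring with $\Q(q)$, so $M \otimes_A \Q(q) \cong \Q(q)^r$ and $\dim_{\Q(q)} M \otimes_A \Q(q) = r$. For the second, $\C$ is regarded as an $A$-module via the ring homomorphism $A \twoheadrightarrow A/(q-1)A \cong \Q \hookrightarrow \C$, so $M \otimes_A \C \cong (M/(q-1)M) \otimes_\Q \C$. Each free summand contributes $\C$ and each cyclic torsion summand $A/(q-1)^{k_i}A$ also contributes $A/(q-1)A \otimes_\Q \C \cong \C$, giving $\dim_\C M \otimes_A \C = r + s$.

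The hypothesis $\dim_{\Q(q)} M \otimes_A \Q(q) = \dim_\C M \otimes_A \C$ then forces $r = r + s$, i.e.\ $s = 0$. Hence $M \cong A^r$ is free with $\rank_A M = r = \dim_\C M \otimes_A \C$, as desired.

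There is no real obstacle here; the argument is essentially an unpacking of the structure theorem, with the only subtlety being to specify how $\C$ is viewed as an $A$-module (via specialization at $q=1$) so that the computation of $M \otimes_A \C$ makes sense. The statement is a standard tool used repeatedly in the study of quantum groups at roots of unity or at $q=1$, namely, that equality of generic and specialized dimensions detects freeness.
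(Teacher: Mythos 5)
Your proof is correct and follows exactly the same approach as the paper: invoking the structure theorem for finitely-generated modules over the PID $A$, then observing that torsion summands $A/(q-1)^nA$ vanish under $\otimes_A \Q(q)$ but contribute one dimension under $\otimes_A \C$. The paper's proof is a terser version of yours, leaving the final bookkeeping implicit.
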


\begin{proof}
    By the structure theorem for finitely-generated modules over a principal ideal domain, every finitely-generated $A$-module is a direct sum of ones of the form $A/(q-1)^n$, $n \in \N$.  Now, if $n \ge 1$, then $(A/(q-1)^n) \otimes_A \Q(q) = 0$ but $(A/(q-1)^n) \otimes_A \C \cong \C$.
\end{proof}

For $i \in I$, $a,n \in \Z$, with $n>0$, we define the following invertible elements of $A$:
\begin{gather*}
    q_i := q^{d_i},\qquad
    [a]_i := \frac{q_i^a - q_i^{-a}}{q_i - q_i^{-1}} = \frac{q^{ad_i} - q^{-ad_i}}{q^{d_i} - q^{-d_i}},
    \\
    [n]_i^! := [n]_i [n-1]_i \dotsb [1]_i,\qquad
    \qbinom{a}{n}_i := \frac{[a]_i [a-1]_i \dotsb [a-n+1]_i}{[n]_i^!},\qquad
    \qbinom{a}{0}_i := 1.
\end{gather*}

Let $\fg$ denote the simple Lie algebra of type $F_4$.  Its quantized enveloping algebra $U_q = U_q(\fg)$ is the $\Q(q)$-algebra with generators $E_i, F_i, H_i, K_i, K_i^{-1}$, $i \in I$, and relations (for $i,j \in I$)
\begin{align*}
    K_i K_i^{-1} &= 1 = K_i^{-1} K_i,&
    K_i K_j &= K_j K_i, \\
    K_i E_j K_i^{-1} &= q^{(i,j)} E_j,&
    K_i F_j K_i^{-1} &= q^{-(i,j)} F_j, \\
    E_i F_j - F_j E_i &= \delta_{ij} H_i,&
    (q_i-q_i^{-1})H_i &= K_i - K_i^{-1},
\end{align*}
where $\delta_{ij}$ is the Kronecker delta, and (for $i \ne j$),
\[
    \sum_{r=0}^{1-a_{ij}} (-1)^r \qbinom{1-a_{ij}}{r}_i E_i^{1-a_{ij}-r} E_j E_i^r = 0, \qquad
    \sum_{r=0}^{1-a_{ij}} (-1)^r \qbinom{1-a_{ij}}{r}_i F_i^{1-a_{ij}-r} F_j F_i^r = 0.
\]
For $\nu = \sum_{i \in I} n_i \alpha_i$, $n_i \in \Z$, we define $K_\nu = \prod_{i \in I} K_i^{n_i}$.  There is a unique Hopf algebra structure on $U_q$ such that, for all $i \in I$,
\begin{align} \label{HopfE}
    \Delta(E_i) &= E_i \otimes 1 + K_i \otimes E_i,&
    \varepsilon(E_i) &= 0,&
    S(E_i) &= - K_i^{-1} E_i,
    \\ \label{HopfF}
    \Delta(F_i) &= F_i \otimes K_i^{-1} + 1 \otimes F_i,&
    \varepsilon(F_i) &=0,&
    S(F_i) &= - F_i K_i,
    \\ \label{HopfK}
    \Delta(K_i) &= K_i \otimes K_i,&
    \varepsilon(K_i) &=1,&
    S(K_i) &= K_i^{-1}.
\end{align}
\details{
    There are different conventions for the Hopf algebra structure.  We have chosen the one from \cite[Prop.~4.11]{Jan96} and \cite[Lem.~3.1.4]{Lus10}.  The one in \cite[p.~281]{CP95} is the opposite of the one given above.  Note that the above equations imply, for example, that $\Delta(H_i) = H_i \otimes K_i^{-1} + K_i \otimes H_i$ for $i \in I$.
}

Now let $U_A$ be the $A$-subalgebra of $U_q$ generated by the elements
\[
    E_i,\quad F_i,\quad K_i^{\pm 1},\qquad
    i \in I.
\]
(This implies that $U_A$ also contains $H_i = E_i F_i - F_i E_i$, $i \in I$.)  Note that this is the same as the $A$-subalgebra of $U_q$ generated by the elements
\[
    E_i^{(r)} := \frac{E_i^r}{[r]_i^!},\quad F_i^{(r)}:= \frac{F_i^r}{[r]_i^!},\quad K_i^{\pm 1},\qquad
    i \in I,\ r \ge 1,
\]
since the $[r]_i^!$ are invertible in $A$.  Thus $U_A$ is an extension of both the restricted and non-restricted integral forms of $U_q$; see \cite[\S9.2, \S9.3]{CP95}.  Note that $U_A$ is a Hopf subalgebra of $U_q$ (viewing $U_q$ as a Hopf algebra over $A$).  Furthermore, we have isomorphisms of algebras
\begin{equation} \label{sedai}
    U_A \otimes_A \Q(q) \cong U_q,\qquad
    (U_A \otimes_A \C)/(K_i-1 : i \in I) \cong U_\C,
\end{equation}
where $U_\C = U(\fg)$ denotes the universal enveloping algebra of $\fg$ over $\C$, and we view $\C$ as an $A$-module via the map $q \mapsto 1$.  See, for example, \cite[Prop.~9.2.3]{CP95}.

The \emph{bar involution} is the antilinear ring involution
\[
    \bar{\ } \colon U_q \to U_q,\qquad
    \overline{E_i} = E_i,\quad
    \overline{F_i} = F_i,\quad
    \overline{K_i^{\pm 1}} = K_i^{\mp 1}.
\]
By \emph{antilinear}, we mean that it is $\Q$-linear and $\overline{q^{\pm 1} x} = q^{\mp 1} \overline{x}$ for $x \in U_q$.  The bar involution induces an antilinear involution of $U_A$, which we also call the bar involution.

Let $\Mcat_q$ denote the category of finite-dimensional $U_q$-modules of type $1$, and let $\Mcat_\C$ denote the category of finite-dimensional $U_\C$-modules.  The categories $\Mcat_q$ and $\Mcat_\C$ are both semisimple, with irreducible objects given by highest-weight representations with dominant integral highest weights.  Furthermore, tensor product multiplicities are the same in the two categories, as are dimensions of homomorphism spaces between corresponding modules.
\details{
    The classification of objects in $\Mcat_q$ can be found in \cite[\S10.1]{CP95} and \cite[Ch.~5]{Jan96}.  The fact that tensor product multiplicities are the same can be proved as in the proof of \cite[Prop.~10.1.16]{CP95}.  Then equality of dimensions of homomorphism spaces follows from Schur's lemma.
}

For each dominant integral weight $\lambda$, let $L_q(\lambda)$ denote the corresponding type $1$ simple $U_q$-module, and let $L_A(\lambda)$ denote its $A$-form.  Thus $L_A(\lambda)$ is the $U_A$-submodule of $L_q(\lambda)$ generated by a chosen highest weight vector $v_\lambda$.  Then, under the isomorphisms \cref{sedai},
\[
    L_A(\lambda) \otimes_A \Q(q) \cong L_q(\lambda),\qquad
    L_A(\lambda) \otimes_A \C \cong L_\C(\lambda),
\]
where $L_\C(\lambda)$ is the simple $U_\C$-module of highest weight $\lambda$.  Furthermore, we can choose an $A$-basis $\bB(\lambda)$ of $L_A(\lambda)$, and this induces a $\Q(q)$-basis of $L_q(\lambda)$ and a $\C$-basis of $L_\C(\lambda)$.  We will denote the image of the chosen highest weight vector $v_\lambda$ in these two tensor products again by $v_\lambda$.  Let $\Mcat_A$ denote the category of $U_A$-modules that are direct sums of $L_A(\lambda)$ for dominant integral weights $\lambda$.

Throughout this section and the next, we will be moving between the three categories $\Mcat_q$, $\Mcat_A$, and $\Mcat_\C$.  When we wish to make a statement that holds in each of these categories, we use a bullet $\bullet$.  For example, $U_\bullet$ can refer to $U_q$, $U_A$, or $U_\C$.  Then $\kk$ will denote $\Q(q)$, $A$ , or $\C$, respectively.

Since the longest element of the Weyl group of type $F_4$ acts on the weight lattice as multiplication by $-1$, it follows that $L_q(\lambda)^* \cong L_q(\lambda)$ for all dominant integral weights $\lambda$.  Thus we have nonzero $U_q$-module homomorphisms, unique up to multiplication by an element of $\Q(q)^\times$,
\[
    D_q^\lambda \colon L_q(\lambda) \otimes L_q(\lambda) \to \Q(q),
\]
where we view $\Q(q)$ as the trivial one-dimensional module.  Now fix an element $v_{-\lambda}$ spanning the $(-\lambda)$-weight space $L_A(\lambda)_{-\lambda}$ of $L_A(\lambda)$.  We normalize $D_q^\lambda$ so that
\[
    D_q^\lambda ( v_\lambda \otimes v_{-\lambda} ) = 1.
\]
Similarly, there is a unique $U_\C$-module homomorphism
\begin{equation} \label{skates}
    D_\C^\lambda \colon L_\C(\lambda) \otimes_\C L_\C(\lambda) \to \C
    \quad \text{such that} \quad
    D_\C^\lambda(v_\lambda \otimes v_{-\lambda}) = 1.
\end{equation}

\begin{lem}
    If $\lambda$ is a dominant integral weight, then
    \begin{equation} \label{ricochet}
        D_q^\lambda(x v \otimes w) = D_q^\lambda(v \otimes S(x)w),\quad
        v,w \in L_q(\lambda),\ x \in U_q.
    \end{equation}
\end{lem}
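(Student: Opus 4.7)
The plan is to leverage the fact that $D_q^\lambda$ is, by definition, a homomorphism of $U_q$-modules from $L_q(\lambda) \otimes L_q(\lambda)$ into the trivial module $\Q(q)$. Unpacking this via the coproduct formulas \cref{HopfE,HopfF,HopfK} yields the identity
\begin{equation*}
    D_q^\lambda(\Delta(x)(v \otimes w)) = \varepsilon(x)\, D_q^\lambda(v \otimes w), \qquad x \in U_q,\ v,w \in L_q(\lambda).
\end{equation*}
The desired relation \cref{ricochet} is the dual-module translation of this, and the two are equivalent via the antipode since $D_q^\lambda$ realizes the isomorphism $L_q(\lambda) \cong L_q(\lambda)^*$.

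My strategy is to check \cref{ricochet} on the algebra generators $E_i, F_i, K_i^{\pm 1}$ of $U_q$ and then extend to all of $U_q$ by multiplicativity. The extension is immediate: if the identity holds for $x$ and $y$, then
\begin{equation*}
    D_q^\lambda(xy \cdot v \otimes w) = D_q^\lambda(yv \otimes S(x)w) = D_q^\lambda(v \otimes S(y)S(x)w) = D_q^\lambda(v \otimes S(xy)w),
\end{equation*}
using the antimultiplicativity $S(xy) = S(y)S(x)$.

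For the generators, I would dispatch the $K_i$ case first. Equivariance reads $D_q^\lambda(K_i v \otimes K_i w) = D_q^\lambda(v \otimes w)$, and substituting $w \mapsto K_i^{-1} w$ yields \cref{ricochet}, consistent with $S(K_i) = K_i^{-1}$. Then for $E_i$, equivariance gives $D_q^\lambda(E_i v \otimes w) + D_q^\lambda(K_i v \otimes E_i w) = 0$; feeding in the already-established $K_i$ identity converts the second summand into $D_q^\lambda(v \otimes K_i^{-1} E_i w)$, which matches $S(E_i) = -K_i^{-1} E_i$. The $F_i$ case runs analogously starting from $\Delta(F_i) = F_i \otimes K_i^{-1} + 1 \otimes F_i$, with $w \mapsto K_i w$ substituted at the end to clear the $K_i^{-1}$, recovering $S(F_i) = -F_i K_i$.

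I foresee no genuine obstacle: the proof is essentially a bookkeeping exercise reconciling the coproduct, counit, and antipode conventions chosen in \cref{HopfE,HopfF,HopfK} with the standard formula for the $U_q$-action on the dual module $L_q(\lambda)^*$. The only point requiring care is the order of the generator checks, since the $E_i$ and $F_i$ arguments invoke the $K_i$ identity to peel off a $K_i^{\pm 1}$ factor from the right-hand slot.
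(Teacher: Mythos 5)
Your proposal is correct and essentially identical to the paper's proof: both reduce to checking the identity on the generators $E_i, F_i, K_i$ via equivariance of $D_q^\lambda$ with the coproduct formulas, both use the $K_i$ case to convert the $K_i v$ factor in the $E_i$ computation, and both apply a $w \mapsto K_i w$ substitution (or, equivalently, start from $D_q^\lambda(v \otimes K_i w)$) to handle $F_i$. The remark about order of checks (doing $K_i$ first) matches the structure of the paper's argument as well.
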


\begin{proof}
    It suffices to prove that \cref{ricochet} holds for $x \in \{E_i, F_i, K_i : i \in I\}$.  For $v,w \in L_q(\lambda)$ and $i \in I$, we have
    \[
        D_q^\lambda(v \otimes S(K_i) w)
        = D_q^\lambda(v \otimes K_i^{-1} w)
        = K_i D_q^\lambda(v \otimes K_i^{-1} w)
        = D_q^\lambda(K_i v \otimes w)
    \]
    and
    \begin{multline*}
        0 = E_i D_q^\lambda(v \otimes w)
        = D_q^\lambda(E_i v \otimes w) + D_q^\lambda(K_i v \otimes E_i w)
        \\
        = D_q^\lambda(E_i v \otimes w) + D_q^\lambda(v \otimes K_i^{-1} E_i w)
        = D_q^\lambda(E_i v \otimes w) - D_q^\lambda(v \otimes S(E_i) w)
    \end{multline*}
    and
    \[
        0 = F_i D_q^\lambda(v \otimes K_i w)
        = D_q^\lambda(F_i v \otimes K_i^{-1} K_i w) + D_q^\lambda(v \otimes F_i K_i w)
        = D_q^\lambda(F_i v \otimes w) - D_q^\lambda(v \otimes S(F_i) w).
        \qedhere
    \]
\end{proof}

Restriction of $D_q^\lambda$ gives a map $L_A(\lambda) \otimes L_A(\lambda) \to \Q(q)$ sending $v_\lambda \otimes v_{-\lambda}$ to $1$.  Now, by definition, every element of $L_A(\lambda)$ can be written as $x v_\lambda$ for some $x \in U_A$.  Since $L_\C(\lambda)$ is an irreducible $U_\C$-module, there exists a sequence of $i_1,\dotsc,i_r$ of elements of $I$ such that $E_{i_r} \dotsm E_{i_1} v_{-\lambda} \otimes_A \C$ is a nonzero element of $L_\C(\lambda)_\lambda$.  It follows that $E_{i_r} \dotsm E_{i_1} v_{-\lambda} \subseteq A^{\times} v_\lambda$, and so every element of $L_A(\lambda)$ can also be written as $y v_{-\lambda}$ for some $y \in U_A$.  For $x,y \in U_A$,
\[
    D_q^\lambda (x v_\lambda \otimes y v_{-\lambda})
    = D_q^\lambda ( v_\lambda \otimes S(x) y v_{-\lambda} )
    \subseteq D_q^\lambda ( v_\lambda \otimes U_A v_{-\lambda} )
    = D_q^\lambda (v_\lambda \otimes A v_{-\lambda})
    = A,
\]
where the second-to-last equality follows from the fact that $D_q^\lambda (v_\lambda \otimes w) = 0$ for $w \in L_A(\lambda)_\mu$, $\mu \ne -\lambda$.  It follows that the restriction of $D_q^\lambda$ yields a homomorphism of $U_A$-modules
\begin{equation}
    D_A^\lambda \colon L_A(\lambda) \otimes_A L_A(\lambda) \to A
    \quad \text{such that} \quad
    D_A^\lambda(v_\lambda \otimes v_{-\lambda}) = 1.
\end{equation}

Since $D^\lambda_q$ induces the isomorphism $L_q(\lambda)^* \cong L_q(\lambda)$, it is nondegenerate.  Let $\bB(\lambda)^\vee = \{b^\vee : b \in \bB(\lambda)\}$ be the basis of $L_q(\lambda)$ dual to $\bB(\lambda)$ with respect to $D_q^\lambda$, so that
\[
    D_q^\lambda(b^\vee \otimes c) = \delta_{b,c},\qquad b,c \in \bB(\lambda).
\]
The following lemma implies that $\bB(\lambda)^\vee$ is also the basis dual to $\bB(\lambda)$ with respect to $D_A^\lambda$ and $D_\C^\lambda$.

\begin{lem} \label{icebubble}
    We have $\bB(\lambda)^\vee \subseteq L_A(\lambda)$.
\end{lem}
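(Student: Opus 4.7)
The plan is to exhibit $\bB(\lambda)^\vee$ as the output of inverting the Gram matrix of $D_A^\lambda$ in the basis $\bB(\lambda)$ and then to show this Gram matrix is invertible over $A$, rather than merely over $\Q(q)$. Concretely, set $M := [D_A^\lambda(b,c)]_{b,c \in \bB(\lambda)}$, viewed as a square matrix with entries in $A$. Since $D_A^\lambda$ is the restriction of $D_q^\lambda$, the dual basis elements $b^\vee$ are precisely the columns of $M^{-1}$ expressed in the basis $\bB(\lambda)$, computed a priori over $\Q(q)$. Hence it suffices to show $M^{-1}$ has entries in $A$, and for this it suffices to show $\det M \in A^\times$.

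Recall from the discussion of $A$ that an element of $A$ is a unit if and only if its image under the specialization map $A \to \C$, $q \mapsto 1$, is nonzero. So the problem reduces to computing $\det M$ modulo the maximal ideal $(q-1)A$ and checking nonvanishing. Under $-\otimes_A \C$, the basis $\bB(\lambda)$ of $L_A(\lambda)$ descends to a $\C$-basis of $L_\C(\lambda) \cong L_A(\lambda) \otimes_A \C$, and $D_A^\lambda$ descends to a $U_\C$-module homomorphism $L_\C(\lambda) \otimes_\C L_\C(\lambda) \to \C$ sending $v_\lambda \otimes v_{-\lambda}$ to $1$. By the normalization convention for $D_\C^\lambda$ and the uniqueness (up to scalar) of invariant pairings on the simple self-dual module $L_\C(\lambda)$, this specialization equals $D_\C^\lambda$.

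Now the key input is that $D_\C^\lambda$ is nondegenerate: any nonzero $U_\C$-invariant bilinear pairing on a simple self-dual module is nondegenerate, since its radical would be a proper submodule. Consequently the Gram matrix of $D_\C^\lambda$ in the specialized basis has nonzero determinant, i.e.\ $\det M$ does not vanish at $q=1$. Therefore $\det M \in A^\times$, which gives $M^{-1} \in \Mat(A)$ and hence $\bB(\lambda)^\vee \subseteq L_A(\lambda)$.

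The only delicate point is the identification of the $q=1$ specialization of $D_A^\lambda$ with $D_\C^\lambda$; once the normalization on highest- and lowest-weight vectors is tracked through the isomorphism $L_A(\lambda) \otimes_A \C \cong L_\C(\lambda)$, this is immediate from Schur's lemma applied to $\End_{U_\C}(L_\C(\lambda)) = \C$. Everything else is formal manipulation with dual bases and units in the local ring $A$.
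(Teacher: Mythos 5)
Your proof is correct and follows essentially the same route as the paper's: write the Gram matrix $M$ of $D_A^\lambda$ in the basis $\bB(\lambda)$, observe that its specialization at $q=1$ is the Gram matrix of the nondegenerate form $D_\C^\lambda$, conclude $\det M \in A^\times$, and hence the dual basis lies in $L_A(\lambda)$. The only cosmetic difference is that you read off $\bB(\lambda)^\vee$ from $M^{-1}$ directly (with a minor rows/columns slip that doesn't affect the argument), whereas the paper constructs a dual basis $\bB'$ over $A$ and then invokes uniqueness of dual bases.
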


\begin{proof}
    Let $n = \dim_\C L_\C(\lambda)$.  Taking coefficients in the basis $\bB(\lambda)$, we can identify $L_\bullet(\lambda)$ with $\kk^n$, whose elements we write as column vectors.  Then $D^\lambda_q$ corresponds to an invertible $n \times n$ matrix $M$ with entries in $\Q(q)$:
    \[
        D^\lambda_q(v \otimes w) = v^t M w,\qquad v,w \in L_q(\lambda),
    \]
    where $v^t$ denotes the transpose of an element of $\kk^n$.  Since restriction to $L_A(\lambda)$ yields  $D_A^\lambda$, the entries of $M$, which are all of the form $D^\lambda_q(b \otimes c)$, for $b,c \in \bB(\lambda)$, lie in $A$.  Furthermore, tensoring over $A$ with $\C$ yields $D_\C^\lambda$.  Since $D_\C^\lambda$ is nondegenerate, the matrix $M$ is invertible at $q=1$.  This implies that $\det M \in A^\times$, and hence $M$ is also invertible over $A$.  Therefore, $L_A(\lambda)$ has a basis $\bB'$ that is dual to $\bB(\lambda)$ with respect to $D_A^\lambda$.  But then $\bB'$ is also dual to $\bB(\lambda)$ with respect to $D_q^\lambda$.  Since dual bases are unique, this implies that $\bB(\lambda)^\vee = \bB' \subseteq L_A(\lambda)$.
\end{proof}

\begin{rem} \label{groove}
    Our choices of $v_\lambda$ and $v_{-\lambda}$ are unique up to multiplication by an element of $A^\times$.  It follows that the bilinear forms $D_\bullet^\lambda$ are also unique up to such a multiple.  We will use this fact later, in \cref{groundwork}, where we will scale the $D_\bullet^\lambda$ by an appropriate factor.
\end{rem}

\begin{lem}
    If $\lambda$ is a dominant integral weight, then
    \begin{equation} \label{trolloc}
        D_q^\lambda(v \otimes w)
        = D_q^\lambda(w \otimes K_{-2\rho}v)
        = D_q^\lambda \big( K_{2\rho}w \otimes v \big),
        \qquad v,w \in L_q(\lambda).
    \end{equation}
\end{lem}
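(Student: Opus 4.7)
The plan is to show that the bilinear form
\[
    \tilde{D}(v \otimes w) := D_q^\lambda(w \otimes K_{-2\rho} v)
\]
is a $U_q$-module homomorphism $L_q(\lambda) \otimes L_q(\lambda) \to \Q(q)$, and then to determine the scalar relating $\tilde{D}$ to $D_q^\lambda$. The second equality in the proposition follows from the first by substituting $K_{2\rho} v$ for $v$ and using $K_{-2\rho} K_{2\rho} = 1$, so the task reduces to proving $\tilde{D} = D_q^\lambda$.

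The invariance of $\tilde{D}$ rests on two ingredients. The first is the identity $S^2(y) = K_{-2\rho}\, y\, K_{2\rho}$ for all $y \in U_q$, equivalently $S(y)\, K_{-2\rho} = K_{-2\rho}\, S^{-1}(y)$, which one verifies directly on the generators $E_i$, $F_i$, $K_j$ using \cref{HopfE,HopfF,HopfK} together with the commutation rules $K_i E_j = q^{(i,j)} E_j K_i$ and $K_i F_j = q^{-(i,j)} F_j K_i$. The second is the Hopf-algebra identity $\sum S^{-1}(x_{(2)})\, x_{(1)} = \varepsilon(x)$, obtained by applying the anti-homomorphism $S^{-1}$ to the antipode axiom $\sum S(x_{(1)}) x_{(2)} = \varepsilon(x)$. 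Combining these with \cref{ricochet} to push the $U_q$-action across the first tensor slot gives
\[
    \tilde{D}\bigl(\Delta(x)(v \otimes w)\bigr) = \sum D_q^\lambda\bigl(w \otimes K_{-2\rho}\, S^{-1}(x_{(2)})\, x_{(1)} v\bigr) = \varepsilon(x)\, \tilde{D}(v \otimes w).
\]

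Because $L_q(\lambda)$ is simple and self-dual, $\Hom_{U_q}(L_q(\lambda)^{\otimes 2}, \Q(q))$ is one-dimensional, so $\tilde{D} = c\, D_q^\lambda$ for a unique $c \in \Q(q)$. Iterating the definition and using that $K_{-2\rho}$ is group-like yields
\[
    \tilde{\tilde{D}}(v \otimes w) = D_q^\lambda(K_{-2\rho} v \otimes K_{-2\rho} w) = D_q^\lambda\bigl(\Delta(K_{-2\rho})(v \otimes w)\bigr) = D_q^\lambda(v \otimes w),
\]
forcing $c^2 = 1$, so $c \in \{\pm 1\}$.

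The remaining sign determination will be the main subtle step. Both forms restrict to $A$-valued forms on $L_A(\lambda)$ (arguing as in \cref{icebubble}), and at $q=1$ the form $\tilde{D}$ becomes $(v,w) \mapsto D_\C^\lambda(w,v)$. Since every irreducible representation of the simple Lie algebra of type $F_4$ has Frobenius--Schur indicator $+1$, the classical form $D_\C^\lambda$ is symmetric, so $c|_{q=1} = 1$ and hence $c = 1$ identically. A more hands-on alternative, avoiding any appeal to Frobenius--Schur, would be to evaluate $\tilde{D}(v_\lambda \otimes v_{-\lambda})$ directly by writing $v_{-\lambda} = F\, v_\lambda$ for an explicit weight-lowering element $F \in U_q^-$ and applying \cref{ricochet} together with $S(F_i) = -F_i K_i$ to compute the resulting scalar explicitly.
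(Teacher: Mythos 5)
Your proof is correct, and it takes a genuinely different route from the paper at both key steps. For the invariance of $\tilde{D}(v\otimes w) := D_q^\lambda(w\otimes K_{-2\rho}v)$, you verify directly that it is a module homomorphism using $S^2 = \mathrm{Ad}_{K_{-2\rho}}$ and the antipode axiom, whereas the paper instead factors both forms through the canonical evaluation maps $\ev,\ev'$ composed with the isomorphism $\chi\colon V\to V^*$; both are clean, and yours is arguably more self-contained. The more substantial divergence is in pinning down the scalar $c$ with $\tilde{D} = c\,D_q^\lambda$: the paper evaluates both sides on $v_\lambda\otimes v_{-\lambda}$ with $v_{-\lambda} = yv_\lambda$ for a specific $y\in U_q^-$, invoking \cite[4.13(4)]{Jan96} to compute $S(y)$ explicitly. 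You instead observe the neat fact that iterating $\tilde{D}$ and using that $K_{-2\rho}$ is group-like gives $c^2 = 1$, so $c\in\{\pm 1\}\subset A$; then you specialize at $q=1$, where $K_{-2\rho}$ acts trivially and $\tilde{D}$ becomes the flip of $D_\C^\lambda$, so $c = 1$ exactly when the classical contravariant form is symmetric. This is a nice trick that buys you a cheap reduction to a single sign, but the price is that you must import the classical fact that every self-dual irreducible $F_4$-representation is orthogonal (Frobenius--Schur indicator $+1$), which the paper neither uses nor proves. That fact is standard — it follows because $F_4$ has trivial center, so the weight lattice equals the root lattice and no symplectic representations occur — but it is an appeal to outside machinery. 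Your parenthetical ``hands-on alternative'' is essentially the paper's actual argument; it has the advantage of staying entirely internal to the quantum setting, but requires the explicit antipode formula $S(y) = q^{2(\lambda,\rho-\lambda)}yK_{2\lambda}$, which is its own small annoyance to set up.
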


\begin{proof}
    It suffices to prove the first equality, since the second then follows from \cref{ricochet}.  Let $V = L_q(\lambda)$.  First note that the map
    \[
        \chi \colon V \to V^*,\quad \chi(v)(w) = D_q(v \otimes w),
    \]
    is an isomorphism of $U_q$-modules.  Indeed, for $v,w \in V$ and $x \in U_q$, we have
    \[
        \chi(xv)(w)
        = D_q(xv \otimes w)
        \overset{\cref{ricochet}}{=} D_q(v \otimes S(x)w)
        = \chi(v)(S(x)w)
        = (x \chi(v))(w).
    \]
    Consider also the $U_q$-module homomorphisms
    \begin{align*}
        \ev &\colon V^* \otimes V \to \Q(q),& f \otimes v &\mapsto f(v), \\
        \ev' &\colon V \otimes V^* \to \Q(q),& v \otimes f &\mapsto f(K_{-2\rho}v).
    \end{align*}
    See, for example, \cite[3.9(3), 3.9(4)]{Jan96}.  (The statements there are for $\mathfrak{sl}_2$, but the generalization to arbitrary finite type is discussed in \cite[\S5.3]{Jan96}.)

    By the tensor-hom adjunction and the fact that $V$ is self-dual, we have
    \begin{equation} \label{arrow}
        \dim \Hom_{U_q}(V \otimes V, \Q(q)) = \dim \Hom_{U_q}(V,V) = 1,
    \end{equation}
    since $V$ is simple.  Thus, the compositions $\ev \circ (\chi \otimes 1_V)$ and $\ev' \circ (1_V \otimes \chi)$ differ by a scalar.  Since
    \[
        D_q^\lambda(v \otimes w) = \ev \circ (\chi \otimes 1_V)(v,w)
        \qquad \text{and} \qquad
        D_q^\lambda(w \otimes K_{-2\rho} v) = \ev' \circ (1_V \otimes \chi)(v,w),
    \]
    it suffices to show that \cref{trolloc} holds for two vectors $v$ and $w$ such that $D_q^\lambda(v \otimes w)$ is nonzero.

    Let $v \in V_\lambda$ be a nonzero highest-weight vector.  Since the determinant of the Cartan matrix \cref{Cartan} is one, the weight lattice of $\fg$ is equal to its root lattice, and so $\lambda$ is a sum of simple roots.  Therefore, since $V$ is irreducible and self-dual (in particular, its set of weights is invariant under the map $\nu \mapsto -\nu$), there exists a sequence $i_1,\dotsc,i_r$ of elements of $I$ such that $\sum_{j=1}^r \alpha_{i_j} = \lambda$ and, setting $y := F_{i_1} F_{i_2} \dotsm F_{i_r} F_{i_r} \dotsm F_{i_2} F_{i_1}$,  the vector $w:= yv$ is a nonzero element of the lowest weight space $V_{-\lambda}$.  Using \cite[4.13(4)]{Jan96}, we have
    \[
        S(y) = q^{2(\lambda,\rho-\lambda)} y K_{2\lambda}
        \quad \text{and so} \quad
        S^{-1}(y) = q^{2(\lambda,\lambda-\rho)} K_{2\lambda} y.
    \]
    \details{
        We use here that $\tau(y)=y$, where $\tau$ is given in \cite[Lem.~4.6(b)]{Jan96}, and the fact that, using \cite[4.13(5)]{Jan96} and the notation introduced there, we have
        \[
            m(2 \lambda) = \frac{1}{2}\left( (2\lambda,2\lambda) - \sum_{\alpha \in \Pi} m_\alpha (\alpha, \alpha) \right) = 2(\lambda, \lambda-\rho).
        \]
    }
    Thus
    \begin{multline*}
        D_q^\lambda (v \otimes w)
        = D_q^\lambda (v \otimes yv)
        \overset{\cref{ricochet}}{=} D_q^\lambda (S^{-1}(y) v \otimes v)
        = q^{2(\lambda - \rho,\lambda)} D_q^\lambda(K_{2\lambda} y v \otimes v)
        \\
        = q^{-(2\rho,\lambda)} D_q^\lambda(w \otimes v)
        = D_q^\lambda(w \otimes K_{-2\rho}v).
        \qedhere
    \end{multline*}
\end{proof}

We next recall the definition of the $R$-matrix following the approach of \cite[\S32.1]{Lus10}.  Let $\Theta$ be the \emph{quasi-$R$-matrix} from \cite[\S4.1]{Lus10}.  This is an infinite sum over the positive root lattice
\begin{equation} \label{Theta}
    \Theta = \sum_\nu \Theta_\nu,\quad
    \Theta_\nu \in U_A(\fg)^-_{-\nu} \otimes_A U_A(\fg)^+_\nu,\quad
    \Theta_0 = 1 \otimes 1.
\end{equation}
The fact that the components of $\Theta$ lie in the $A$-form follows from \cite[Cor.~24.1.6]{Lus10}.  We also have that $\Theta = 1 \otimes 1$ at $q=1$.  Indeed, by \cite[Cor.~4.1.3]{Lus10}, $\Theta \bar{\Theta} = \bar{\Theta} \Theta = 1 \otimes 1$.  Thus, at $q=1$, we have $\Theta^2 = 1 \otimes 1$.  Then a straightforward argument by induction on the height of $\nu$ shows that $\Theta_\nu = 0$ at $q=1$ for $\nu \ne 0$.

For finite-dimensional $U_q$-modules $V$ and $W$, let
\[
    \flip \colon V \otimes W \to W \otimes V,\quad
    \flip (v \otimes w) := w \otimes v,
\]
be the tensor flip.  We also define
\[
    \Pi \colon V \otimes W \to V \otimes W,\quad
    \Pi(v \otimes w) := q^{-(\lambda,\mu)} v \otimes w,
\]
for $v$ of weight $\lambda$ and $w$ of weight $\mu$.  Then we have the \emph{$R$-matrix}
\[
    R_{V,W} := \Theta \circ \flip \circ \Pi \colon V \otimes W \xrightarrow{\cong} W \otimes V.
\]
The action of $\Theta$ is well defined since all but finitely many of the components $\Theta_\nu$ act as zero.  The inverse $R_{V,W}^{-1} \colon W \otimes V \to V \otimes W$ is given by $R_{V,W}^{-1} = \Pi^{-1} \circ \flip^{-1} \circ \overline{\Theta}$, where $\overline{\Theta}$ is obtained from $\Theta$ by applying the bar involution to each tensor factor.  Note that our $R_{V,W}$ is Lusztig's ${}_f\mathcal{R}_{W,V}$, taking the function $f$ of \cite[\S32.1.3]{Lus10} to be $f(\lambda,\mu) = -(\lambda,\mu)$ and noting that our $q$ is denoted $v$ in \cite{Lus10}.  See also \cite[\S7.9]{Jan96} for a discussion of this choice, noting that we may take the $d$ there to be $1$ since the root lattice is equal to the weight lattice in our setting.  Since $\Theta = 1 \otimes 1$ at $q=1$, it follows that $R_{V,W} = \flip$ at $q=1$.

\begin{lem}
    If $\lambda$ is a dominant integral weight, then
    \begin{equation} \label{curly}
        D_q^\lambda \circ R_{L_q(\lambda),L_q(\lambda)} = q^{(\lambda,\lambda + 2\rho)} D_q^\lambda.
    \end{equation}
\end{lem}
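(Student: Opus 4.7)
My plan is to prove the relation by the standard two-step strategy: first show that $D_q^\lambda \circ R_{V,V}$ is forced to be a scalar multiple of $D_q^\lambda$ (where $V = L_q(\lambda)$), and then evaluate on a convenient test vector to identify the scalar.

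For the first step, observe that $R_{V,V}$ is a $U_q$-module isomorphism of $V \otimes V$ with itself, and $D_q^\lambda$ is a $U_q$-module map $V \otimes V \to \Q(q)$. Hence $D_q^\lambda \circ R_{V,V}$ again lies in $\Hom_{U_q}(V \otimes V, \Q(q))$. But this space is one-dimensional by \cref{arrow}, so $D_q^\lambda \circ R_{V,V} = c \, D_q^\lambda$ for a unique $c \in \Q(q)$.

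For the second step, I will evaluate both sides at the vector $v_\lambda \otimes v_{-\lambda}$, for which $D_q^\lambda(v_\lambda \otimes v_{-\lambda}) = 1$ by the normalization convention. Unpacking $R_{V,V} = \Theta \circ \flip \circ \Pi$: the operator $\Pi$ scales by $q^{-(\lambda,-\lambda)} = q^{(\lambda,\lambda)}$, the flip produces $v_{-\lambda} \otimes v_\lambda$, and $\Theta = \sum_\nu \Theta_\nu$ with $\Theta_\nu \in U_A(\fg)^-_{-\nu} \otimes_A U_A(\fg)^+_\nu$ annihilates $v_{-\lambda} \otimes v_\lambda$ for every $\nu > 0$, because the left tensor factor lies in the strictly negative part of $U_q$ and therefore kills the lowest weight vector $v_{-\lambda}$. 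Only $\Theta_0 = 1 \otimes 1$ survives, giving $R_{V,V}(v_\lambda \otimes v_{-\lambda}) = q^{(\lambda,\lambda)}\, v_{-\lambda} \otimes v_\lambda$.

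It then remains to compute $D_q^\lambda(v_{-\lambda} \otimes v_\lambda)$. By \cref{trolloc}, this equals $D_q^\lambda(v_\lambda \otimes K_{-2\rho} v_{-\lambda})$, and since $v_{-\lambda}$ has weight $-\lambda$, we have $K_{-2\rho} v_{-\lambda} = q^{(-2\rho, -\lambda)} v_{-\lambda} = q^{(2\rho,\lambda)} v_{-\lambda}$. Hence $D_q^\lambda(v_{-\lambda} \otimes v_\lambda) = q^{(2\rho,\lambda)}$, and combining gives $c = q^{(\lambda,\lambda)} q^{(2\rho,\lambda)} = q^{(\lambda,\lambda + 2\rho)}$, as desired. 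The only subtle point is the vanishing of $\Theta_\nu$ on $v_{-\lambda} \otimes v_\lambda$ for $\nu > 0$; once that is in hand, the rest is a short computation using the already-established \cref{trolloc}.
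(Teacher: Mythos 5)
Your proof is correct and follows essentially the same strategy as the paper: use the one-dimensionality of $\Hom_{U_q}(V \otimes V, \Q(q))$ from \cref{arrow} to reduce to finding the scalar, then evaluate on $v_\lambda \otimes v_{-\lambda}$, track $\Pi$ and $\flip$, observe that only $\Theta_0$ contributes, and finish with \cref{trolloc}. You simply spell out more explicitly the reason the higher $\Theta_\nu$ vanish (the negative left tensor factor kills the lowest weight vector), which the paper compresses into a citation of \cref{Theta}.
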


\begin{proof}
    Let $V = L_q(\lambda)$.  By \cref{arrow}, there exists $c \in \Q(q)$ such that $D_q^\lambda \circ R_{V,V} = c D_q^\lambda$.  It remains to show that $c=q^{(\lambda,\lambda+2\rho)}$.  Let $v \in V_\lambda$ be a nonzero highest-weight vector and let $w \in V_{-\lambda}$ be a nonzero lowest-weight vector.  We compute
    \[
        D_q^\lambda \circ R_{V,V} (v \otimes w)
        = D_q^\lambda \circ \Theta \circ \flip \circ \Pi (v \otimes w)
        \overset{\cref{Theta}}{=} q^{(\lambda,\lambda)} D_q^\lambda (w \otimes v)
        \overset{\cref{trolloc}}{=}  q^{(\lambda+2\rho,\lambda)} D_q^\lambda (v \otimes w).
    \]
    Hence $c = q^{(\lambda,\lambda + 2\rho)}$, as desired.
\end{proof}

Recall that, for $V \in \Mcat_q$, the \emph{quantum trace} is the $\Q(q)$-linear map
\[
    \tr_q \colon \End_{\Q(q)}(V) \to \Q(q),\quad
    f \mapsto \tr(f \circ K_{-2\rho}),
\]
where $\tr$ denotes the usual trace of a linear map.  (See \cite[\S5.3]{Jan96}.)  The \emph{quantum dimension} of $V$ is
\[
    \qdim(V) = \tr_q(1_V) = \tr(K_{-2\rho}).
\]
The quantum dimension of the simple modules is given by the \emph{quantum Weyl dimension formula}.  Even though this formula is well known, we were unable to find a proof in the literature.  Hence we have included a proof for the sake of completeness.

\begin{lem}[Quantum Weyl dimension formula]
    We have
    \begin{equation} \label{qWeyl}
        \dim_q L_q(\lambda)
        = \prod_{\nu \in \Phi^+} \frac{[(\lambda + \rho, \nu)]}{[(\rho,\nu)]}.
    \end{equation}
\end{lem}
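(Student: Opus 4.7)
The plan is to reduce the claim to the classical Weyl character and denominator formulas via the specialization of formal characters that computes the quantum dimension. Since $L_q(\lambda)$ has an $A$-form $L_A(\lambda)$ with $L_A(\lambda) \otimes_A \C \cong L_\C(\lambda)$ (see \cref{sedai} and the discussion preceding it), the weight multiplicities of $L_q(\lambda)$ coincide with those of $L_\C(\lambda)$. Combined with the observation that $K_\nu$ acts on a weight vector of weight $\mu$ as $q^{(\nu,\mu)}$, this gives
\[
\qdim L_q(\lambda) = \tr\!\left(K_{-2\rho}|_{L_q(\lambda)}\right) = \sum_\mu \dim L_\C(\lambda)_\mu \cdot q^{-2(\rho,\mu)} = s\!\left(\operatorname{ch} L_\C(\lambda)\right),
\]
where $s$ denotes the $\Q$-algebra homomorphism from the group algebra of the weight lattice to $\Q(q)$ sending $e^\mu \mapsto q^{-2(\rho,\mu)}$.

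By the classical Weyl character formula, $\operatorname{ch} L_\C(\lambda) = A_{\lambda+\rho}/A_\rho$, where $A_\nu := \sum_{w} (-1)^{\ell(w)} e^{w\nu}$ and the sum is over the Weyl group $W$. Applying $s$ to $A_{\lambda+\rho}$, using $W$-invariance of $(\ ,\ )$, and reindexing $w \mapsto w^{-1}$ in the sum yields
\[
s(A_{\lambda+\rho}) = \sum_{w} (-1)^{\ell(w)} q^{-2(\rho, w(\lambda+\rho))} = \sum_{w} (-1)^{\ell(w)} q^{-2(w\rho,\, \lambda+\rho)}.
\]
The right-hand side is precisely the image of the Weyl denominator identity $A_\rho = \prod_{\nu \in \Phi^+}(e^{\nu/2} - e^{-\nu/2})$ under the auxiliary substitution $e^\mu \mapsto q^{-2(\mu,\lambda+\rho)}$, so it equals $\prod_{\nu \in \Phi^+}\!\left(q^{-(\lambda+\rho,\nu)} - q^{(\lambda+\rho,\nu)}\right)$. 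Specializing this calculation to $\lambda = 0$ gives the analogous expression for $s(A_\rho)$. Forming the ratio, the sign $(-1)^{|\Phi^+|}$ cancels, and inserting a factor of $(q-q^{-1})/(q-q^{-1})$ into each factor converts the differences of powers into the $q$-integers defined in \cref{cherry}. This yields
\[
\qdim L_q(\lambda) = \prod_{\nu \in \Phi^+} \frac{q^{(\lambda+\rho,\nu)} - q^{-(\lambda+\rho,\nu)}}{q^{(\rho,\nu)} - q^{-(\rho,\nu)}} = \prod_{\nu \in \Phi^+}\frac{[(\lambda+\rho,\nu)]}{[(\rho,\nu)]},
\]
as claimed.

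There is no substantial obstacle: every step is a formal consequence of the character identification $\operatorname{ch} L_q(\lambda) = \operatorname{ch} L_\C(\lambda)$, the Weyl character formula, $W$-invariance of the form, and the denominator identity. The only delicate bookkeeping concerns the signs $(-1)^{|\Phi^+|}$ and the reindexing in the Weyl sum, both of which are routine.
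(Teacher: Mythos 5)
Your proof is correct and relies on exactly the same ingredients as the paper's: the classical Weyl character formula for $L_\C(\lambda)$, Weyl-invariance of the bilinear form together with the reindexing $w \mapsto w^{-1}$, and the Weyl denominator identity applied twice (once with $\lambda+\rho$, once with $\rho$). The only organizational difference is that the paper first specializes $q$ to a positive integer so it can interpret $K_{-2\rho}$ as $e^{-2(\log q)\rho^\vee}$ and invoke the character formula as a statement about traces of exponentials, whereas you avoid that device by working directly with formal characters in the group algebra of the weight lattice (using that $L_A(\lambda)$ gives equal weight multiplicities for $L_q(\lambda)$ and $L_\C(\lambda)$) and applying a $\Q$-algebra specialization homomorphism $e^\mu \mapsto q^{-2(\rho,\mu)}$; this is arguably a cleaner packaging of the same argument.
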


\begin{proof}
    It suffices to show that the equality \cref{qWeyl} holds when $q$ is specialized to an arbitrary positive integer and so we assume $q$ is a positive integer.  The Weyl character formula gives that, in the $U_q$-module $L_q(\lambda)$,
    \begin{equation} \label{sunny}
        \tr \left( e^{(\log q) \gamma^\vee} \right)
        = \frac{\sum_{w \in W} \det(w) q^{(w(\lambda+\rho),\gamma)}}{\prod_{\nu \in \Phi_+} \left( q^{(\nu,\gamma)/2} - q^{-(\nu,\gamma)/2} \right) },
        \qquad \gamma \in \R \Phi,
    \end{equation}
    where $W$ is the Weyl group, $\det(w)$ is the determinant of the action of $w$ on the Cartan subalgebra, and $\gamma^\vee$ is the element of the Cartan subalgebra satisfying $\nu(\gamma^\vee) = (\nu,\gamma)$ for all $\nu \in \R \Phi$.  The case $\lambda=0$ gives the Weyl denominator identity
    \[
        \sum_{w \in W} \det(w) q^{(w(\rho),\gamma)}
        = \prod_{\nu \in \Phi_+} \left( q^{(\nu,\gamma)/2} - q^{-(\nu,\gamma)/2} \right).
    \]
    Since the inner product $(\ ,\ )$ is invariant under the action of the Weyl group and $\det(w) = \det(w^{-1})$ for all $w \in W$ (because elements of $w$ are products of reflections), this implies that
    \begin{equation} \label{rays}
        \sum_{w \in W} \det(w) q^{(w(\lambda+\rho),-2\rho)}
        = \sum_{w \in W} \det(w) q^{-2(\lambda+\rho,w(\rho))}
        = \prod_{\nu \in \Phi_+} \left( q^{-(\lambda+\rho,\nu)} - q^{(\lambda+\rho,\nu)} \right).
    \end{equation}
    Now, for all $v \in L_q(\lambda)$ of weight $\mu$, we have
    \[
        e^{- 2 (\log q) \rho^\vee} v
        = q^{\mu(-2\rho^\vee)} v
        = q^{(\mu,-2\rho)} v.
    \]
    Hence
    \begin{multline*}
        \qdim(L_q(\lambda)) = \tr(K_{-2\rho})
        = \tr \left( e^{-2(\log q)\rho^\vee} \right)
        \overset{\cref{sunny}}{=}  \frac{\sum_{w \in W} \det(w) q^{(w(\lambda+\rho),-2\rho)}}{\prod_{\nu \in \Phi_+} \left( q^{-(\nu,\rho)} - q^{(\nu,\rho)} \right) }
        \\
        \overset{\cref{rays}}{=} \prod_{\nu \in \Phi_+} \frac{ q^{(\lambda+\rho,\nu)} - q^{-(\lambda+\rho,\nu)} }{ q^{(\rho,\nu)} - q^{-(\rho,\nu)} }
        = \prod_{\nu \in \Phi^+} \frac{[(\lambda + \rho, \nu)]}{[(\rho,\nu)]}.
        \qedhere
    \end{multline*}
\end{proof}

\section{The functor to the category of modules\label{sec:functor}}

The goal of this section is to define a full functor from the category $\Fcat_q$ of \cref{Fdef} to the category $\Mcat_q$ of finite-dimensional $U_q$-modules of type $1$.  We will first define this functor on $\Tcat$, and then use the results of \cref{sec:2point,sec:specialize,sec:F4} to deduce that this functor factors through the quotient category $\Fcat_q$.

Throughout this section, we set
\begin{gather}
    \tau = q^{12},\qquad \beta = q^{24},
    \\ \label{tree}
    \delta = \frac{[3][8][13][18]}{[4][6][9]},\qquad
    \phi = \frac{[2][7][12]}{[4][6]},\qquad
    z = \frac{q^{-4}-q^4}{[3]},
    \qquad \text{where} \quad
    [n] = \frac{q^n-q^{-n}}{q-q^{-1}}.
\end{gather}
We let $\Tcat_q$ and $\Tcat_A$ denote the corresponding categories from \cref{Tdef} with $\kk = \Q(q)$ and $\kk = A$, respectively.

\begin{rem} \label{degenerate3}
    Since
    \[
        \delta = 26,\quad
        \phi = 7,\quad
        z = 0,\qquad
        \text{when} \quad q = 1,
    \]
    we see that the quotient of
    \[
        \Tcat_\C := \Tcat_A \otimes_A \C,
    \]
    by the relation $\poscross = \negcross$ is isomorphic to the category $\Tcat_{7,26}$ of \cite[Def.~2.1]{GSZ21}, where these crossings are denoted by $\crossmor$.  (See \cref{degenerate,degenerate2}.)  As noted in \cite[Rem.~2.2]{GSZ21}, $\Tcat_{\phi,\delta}$ is actually independent of $\phi$ (which is denoted by $\alpha$ there).  Therefore, even though the choice $\phi = 7/3$ was used for most results in \cite{GSZ21}, these results remain true for $\phi=7$ after translating via the isomorphism from $\Tcat_{7/3,26}$ to $\Tcat_{7,26}$.
\end{rem}

Let $\omega_i$, $i \in I$, be the fundamental weights of $\fg$, and let $\sV_q := L_q(\omega_4)$.  Fix a highest weight vector $\sv \in \sV_q$ and let $\sV_A := U_A \sv$ be the corresponding $A$-form $L_A(\omega_4)$ of $\sV_q$.  Let $\sV_\C := \sV_A \otimes_A \C$ be the corresponding simple highest-weight $U_\C$-module $L_\C(\omega_4)$.  As noted in \cref{sec:F4}, we have
\begin{equation} \label{business}
    \dim_{\Q(q)} \Hom_{U_q}(\sV_q^{\otimes n}, \sV_q^{\otimes m})
    = \dim_\C \Hom_{U_\C}(\sV_\C^{\otimes n}, \sV_\C^{\otimes m})
    \qquad \text{for all } m,n \in \N.
\end{equation}

Note that
\begin{equation} \label{cider}
    (\omega_4, \omega_4) = 2,\quad
    (\omega_4,2\rho) = 22,\quad
    \text{and so}\quad
    (\omega_4, \omega_4 + 2\rho) = 24.
\end{equation}
Using SageMath to compute the quantum dimension of $\sV_q$ via the quantum Weyl dimension formula \cref{qWeyl}, we see that
\begin{equation}
    \qdim \sV_q = \frac{[3] [8] [13] [18]}{[4] [6] [9]} = \delta.
\end{equation}
In addition, computation in SageMath shows that we have a decomposition of $U_q$-modules
\begin{equation} \label{lizard}
    \sV_q^{\otimes 2} = L_q(0) \oplus L_q(\omega_1) \oplus L_q(\omega_3) \oplus \sV_q \oplus L_q(2 \omega_4).
\end{equation}

Choose an $A$-basis $\bB_\sV$ for $\sV_A$ consisting of weight vectors.  It follows that the images of $\bB_\sV$ under the maps $- \otimes_A \Q(q)$ and $- \otimes_A \C$, which we also denote by $\bB_\sV$, are a $\Q(q)$-basis of $\sV_q$ and a $\C$-basis of $\sV_\C$, respectively.  By \cref{lizard}, there is a one-dimensional subspace of $\sV_q^{\otimes 2}$ of weight $\omega_4$ that is annihilated by $E_i$ for $i \in I$.  Let $v_\sT$ be a nonzero vector in this subspace.  We may write $v_\sT$ as a $\Q(q)$-linear combination of the elements of the basis $\bB_\sV^{\otimes 2}$ for $\sV_q^{\otimes 2}$.  Multiplying by an appropriate element of $\Q(q)$ to clear denominators, we may assume that $v_\sT$ is actually a $\Q[q,q^{-1}]$-linear combination of elements of $\bB_\sV^{\otimes 2}$.  Then, dividing all coefficients by the highest power of $q-1$ dividing them all, we may assume that at least one of the coefficients is not divisible by $q-1$.  Hence the image of $v_\sT$ in $\sV_\C \otimes_\C \sV_\C$ is nonzero.

Now, by \cref{lizard}, there is a homomorphism of $U_q$-modules
\begin{equation} \label{lobster}
    \sT_q \colon \sV_q^{\otimes 2} \to \sV_q
\end{equation}
that is unique up to multiplication by an element of $\Q(q)$.  We fix this scalar by declaring that the vector $v_\sT$ defined above is mapped to $\sv$.  It follows that $\sT_q$ induces nonzero module homomorphisms
\[
    \sT_A \colon \sV_A^{\otimes 2} \to \sV_A
    \qquad \text{and} \qquad
    \sT_\C \colon \sV_\C^{\otimes 2} \to \sV_\C.
\]

Let $\sD_\bullet := D_\bullet^{\omega_4}$ and let $\bB_\sV^\vee$ be the basis dual to $\bB_\sV$ with respect to $\sD_\bullet$.  (Recall that it follows from \cref{icebubble} that this dual basis is the same for $\sD_q$, $\sD_A$, and $\sD_\C$.)  Note that, whenever we refer to $\sD_\bullet$, we are referring to a triple $(D_A,D_q,D_\C) = (D_A^{\omega_4}, D_q^{\omega_4}, D_\C^{\omega_4})$ of forms related as described in \cref{sec:F4}.  For any $w \in \sV_A$, we have
\begin{equation} \label{snail}
    w = \sum_{v \in \bB_\sV} \sD_A(v^\vee \otimes w) v
    = \sum_{v \in \bB_\sV} \sD_A(w \otimes v) v^\vee.
\end{equation}
\details{
    There exist $c_v \in A$, for $v \in \bB_\sV$, such that $w = \sum_{v \in \bB_\sV} c_v v$.  Then, for $v \in \bB_\sV$, we have
    \[
        \sD_A(v^\vee \otimes w)
        = \sum_{u \in \bB_\sV} c_u \sD_A(v^\vee \otimes u) = c_v.
    \]
    The proof of the second equality in \cref{snail} is similar.
}

\begin{lem}
    The linear map
    \[
        \sC_\bullet \colon \kk \to \sV_\bullet^{\otimes 2},\qquad
        \sC_\bullet(1) = \sum_{v \in \bB_\sV} v \otimes v^\vee,
    \]
    is a homomorphism of $U_\bullet$-modules.
\end{lem}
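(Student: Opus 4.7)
The plan is to realize $\sC_\bullet$ as the composition of two $U_\bullet$-linear maps, thereby avoiding a direct calculation with the generators $E_i, F_i, K_i$ and the coproduct formulas \cref{HopfE,HopfF,HopfK}. The two maps arise from the standard coevaluation into $V \otimes V^*$ and the self-duality isomorphism $V \xrightarrow{\sim} V^*$ induced by the pairing $\sD_\bullet$, where $V := \sV_\bullet$.

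First, let $\{b^* : b \in \bB_\sV\}$ be the basis of $V^*$ dual to $\bB_\sV$. The standard coevaluation
\[
    \mathrm{coev} \colon \kk \to V \otimes V^*, \qquad 1 \mapsto \sum_{b \in \bB_\sV} b \otimes b^*,
\]
is a homomorphism of $U_\bullet$-modules; this is a general Hopf-algebraic fact using only the definition of the action on $V^*$ through the antipode $S$, together with the antipode axiom. Second, as was observed in the proof of \cref{trolloc}, the map
\[
    \chi \colon V \to V^*, \qquad \chi(v)(w) := \sD_\bullet(v \otimes w),
\]
is a homomorphism of $U_\bullet$-modules by the adjointness relation \cref{ricochet}. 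The defining property $\sD_\bullet(b^\vee \otimes c) = \delta_{b,c}$ of the dual basis gives $\chi(b^\vee) = b^*$ for all $b \in \bB_\sV$. Therefore applying $\id_V \otimes \chi^{-1}$ to $\mathrm{coev}(1)$ yields
\[
    \sum_{b \in \bB_\sV} b \otimes \chi^{-1}(b^*) = \sum_{b \in \bB_\sV} b \otimes b^\vee = \sC_\bullet(1),
\]
so $\sC_\bullet = (\id_V \otimes \chi^{-1}) \circ \mathrm{coev}$ is a composition of $U_\bullet$-linear maps.

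The only point requiring care is the invertibility of $\chi$, needed to form $\chi^{-1}$. Over $\Q(q)$ and $\C$ this is immediate from the nondegeneracy of $\sD_\bullet$ on the simple module $V$, while over $A$ it follows from the argument in the proof of \cref{icebubble}: the matrix of $\sD_A$ in the basis $\bB_\sV$ is invertible over $A$, so $\chi$ is an isomorphism of $U_A$-modules. I do not foresee any genuine obstacle here; once the Hopf-algebraic coevaluation and the self-duality isomorphism $\chi$ are in hand, the argument is entirely formal and applies uniformly in the three settings $\bullet \in \{q, A, \C\}$.
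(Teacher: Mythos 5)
Your proof is correct, and it takes a genuinely different (more conceptual) route from the paper's. The paper proves the lemma by a direct Sweedler-notation computation: expand $x_{(1)} v$ via \cref{snail}, move the antipode across the pairing via \cref{ricochet}, apply \cref{snail} again, and finish with the antipode axiom $S^{-1}\bigl(x_{(1)} S(x_{(2)})\bigr) = \varepsilon(x)$. You instead factor $\sC_\bullet = (\id_V \otimes \chi^{-1}) \circ \mathrm{coev}$, so the $U_\bullet$-linearity reduces to two standard assertions: that the coevaluation $\kk \to V \otimes V^*$ is a module map, and that the self-duality isomorphism $\chi$ induced by $\sD_\bullet$ is a module map. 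The second of these is established inside the paper's proof of \cref{trolloc} using \cref{ricochet}, so both routes rest on exactly the same ingredients; yours makes the structural content explicit (namely, that $\sC_\bullet$ is the coevaluation transported along the self-duality $\chi$), at the cost of deferring the hands-on computation to the ``standard Hopf-algebraic fact'' about $\mathrm{coev}$, whose proof is essentially the paper's calculation with $V$ replaced by $V^*$.

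One point you should spell out rather than treat as convention-free: whether the coevaluation lands in $V \otimes V^*$ or in $V^* \otimes V$ depends on the chosen coproduct and antipode conventions. With the conventions \cref{HopfE,HopfF,HopfK} used here, the paper's proof of \cref{trolloc} records (citing Jantzen) that $\ev \colon V^* \otimes V \to \kk$, $f \otimes v \mapsto f(v)$, is the module map; rigidity then forces the partner coevaluation into $V \otimes V^*$, which is exactly what your factorization requires. Your handling of the invertibility of $\chi$ over $A$ via the argument in \cref{icebubble} is correct, so the proof goes through uniformly in the three settings $\bullet \in \{q, A, \C\}$.
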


\begin{proof}
    It suffices to prove the result over $A$.  For $x \in U_A$, we have (using sumless Sweedler notation)
    \begin{multline*}
        x \sum_{v \in \bB_\sV} v \otimes v^\vee
        = \sum_{v \in \bB_\sV} x_{(1)} v \otimes x_{(2)} v^\vee
        \overset{\cref{snail}}{=} \sum_{v,w \in \bB_\sV} \sD_A(w^\vee \otimes x_{(1)} v) w \otimes x_{(2)} v^\vee
        \\
        \overset{\cref{ricochet}}{=} \sum_{v,w \in \bB_\sV} \sD_A(S^{-1}(x_{(1)}) w^\vee \otimes v) w \otimes x_{(2)} v^\vee
        \overset{\cref{snail}}{=} \sum_{w \in \bB_\sV} w \otimes x_{(2)} S^{-1}(x_{(1)}) w^\vee
        \\
        = \sum_{w \in \bB_\sV} w \otimes S^{-1}(x_{(1)} S(x_{(2)})) w^\vee
        = \varepsilon(x) \sum_{w \in \bB_\sV} w \otimes w^\vee.
        \qedhere
    \end{multline*}
\end{proof}

Let $\sR_\bullet := R_{\sV_\bullet, \sV_\bullet}$.  Recall from \cref{groove} that $\sD_\bullet$ is unique up to multiplication by an element of $A^\times$.  Note that scaling $\sD_\bullet$ by $a \in A^\times$ automatically scales $\sC_\bullet$ by $a^{-1}$.

\begin{prop} \label{groundwork}
    We can choose $\sD_\bullet$ so that there exist monoidal functors $\bFp_\bullet \colon \Tcat_\bullet \to \Mcat_\bullet$ given by $\go \mapsto \sV_\bullet$ and
    \[
        \mergemor \mapsto \sT_\bullet,\quad
        \poscross \mapsto \sR_\bullet,\quad
        \negcross \mapsto \sR_\bullet^{-1},\quad
        \cupmor \mapsto \sC_\bullet,\quad
        \capmor \mapsto \sD_\bullet.
    \]
    Furthermore, the diagram
    \begin{equation} \label{teatime}
        \begin{tikzcd}[column sep=large]
            \Tcat_q \arrow[d, "\bFp_q"'] & \Tcat_A \arrow[l, "- \otimes_A \Q(q)"'] \arrow[r, "- \otimes_A \C"] \arrow[d, "\bFp_A"] & \Tcat_\C \arrow[d, "\bFp_\C"] \\
            \Mcat_q & \Mcat_A \arrow[l, "- \otimes_A \Q(q)"'] \arrow[r, "- \otimes_A \C"] & \Mcat_\C
        \end{tikzcd}
    \end{equation}
    commutes.
\end{prop}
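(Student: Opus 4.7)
The plan is to construct $\bFp_A$ directly at the level of $A$-forms; then $\bFp_q$ and $\bFp_\C$ are forced by the base-change functors $-\otimes_A \Q(q)$ and $-\otimes_A \C$, so that commutativity of \cref{teatime} is automatic. To verify $\bFp_A$ is well defined, I will check that the specified assignment respects each of the defining relations \cref{vortex,venom,chess} of $\Tcat$.

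Most of these relations are formal consequences of the rigid braided structure on $\Mcat_\bullet$. The zigzag identities and the definition of $\splitmor$ in \cref{vortex} express that $\sC_\bullet, \sD_\bullet$ are coevaluation and evaluation for the self-duality of $\sV_\bullet$. Compatibility of the braiding with cups and caps (the last two equalities of \cref{vortex}) holds because $\sR_\bullet$ is a morphism of $U_\bullet$-modules. The Reidemeister~II, Yang--Baxter, and braiding--vertex naturality equations of \cref{venom} then follow from the hexagon axioms for $\sR_\bullet$ combined with the fact that $\sT_A$ is a $U_A$-module morphism. For the negative crossing I will use $R_{V,W}^{-1} = \Pi^{-1} \circ \flip^{-1} \circ \overline{\Theta}$, noting that $\overline{\Theta}$ also lies in the integral form $U_A \otimes_A U_A$ since the bar involution preserves $U_A$.

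The scalar-valued relations of \cref{chess} are eigenvalue computations. The bubble equals $\qdim \sV_q \cdot 1_\one$, and the Quantum Weyl Dimension Formula \cref{qWeyl}, computed via SageMath, yields exactly the $\delta$ in \cref{tree}. The curl equals the ribbon eigenvalue $q^{(\omega_4,\omega_4+2\rho)} = q^{24} = \beta$ by \cref{curly,cider}. The braiding $\sR_\bullet$ acts on the $\sV_q$-isotypic summand of $\sV_q^{\otimes 2}$ as $\pm q^{12}$ (its square is $q^{24}$ by another application of \cref{curly}); the sign is pinned down by specialising at $q=1$, where $\sR$ becomes the tensor flip and \cref{lizard} shows that $\sV_\C$ sits inside the symmetric square of itself, so $\tau = +q^{12}$. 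The lollipop relations follow from $\Hom_{U_\bullet}(\sV_\bullet,\kk) = \Hom_{U_\bullet}(\kk,\sV_\bullet) = 0$. Finally, by Schur's lemma the bigon in \cref{chess} equals $\phi_0 \cdot 1_{\sV_\bullet}$ for some $\phi_0$; exploiting the freedom from \cref{groove} to rescale $\sD_A \mapsto a \sD_A$, $\sC_A \mapsto a^{-1}\sC_A$ for $a \in A^\times$, I will normalise so that $\phi_0$ becomes the prescribed $\phi$.

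Commutativity of \cref{teatime} is then automatic: every generator of $\Tcat$ is sent to a morphism obtained from its $A$-version by base change. The main obstacle is the $\phi$-normalisation: one requires the initial scalar $\phi_0$ to be a unit of $A$ that agrees with $\phi$ modulo $(q-1)$, so that the rescaling factor $a = \phi_0/\phi$ actually lives in $A^\times$. I expect this to reduce to a comparison at $q=1$ with the classical diagrammatics $\Fcat_{7,26}$ of \cite{GSZ21} (cf.\ \cref{degenerate3}), where the analogous classical bigon evaluates to $7 = \phi|_{q=1}$; this identifies $\phi_0 \equiv 7 \pmod{q-1}$ and makes $a$ a unit of $A$, completing the rescaling simultaneously over all three base rings.
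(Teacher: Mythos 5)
Your overall blueprint (build $\bFp_A$, base-change to get $\bFp_q$ and $\bFp_\C$, so that \cref{teatime} commutes for free, and verify relations one at a time) matches the paper's strategy, and most of your individual verifications---the curl via the ribbon eigenvalue \cref{curly}, the bubble via the quantum Weyl dimension formula, the lollipop via Schur's lemma, $\tau^2 = q^{24}$ from \cref{forked} pinned to $+q^{12}$ by the $q=1$ specialization, and the final rescaling of $\sD_\bullet$ so that the bigon evaluates to $\phi$---are essentially the arguments used in the paper.

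There is, however, one genuine gap. You file the third relation in \cref{vortex}, the one asserting that the two ways of bending a leg of $\mergemor$ around give the same morphism $\splitmor \colon \go \to \go\otimes\go$, under ``$\sC_\bullet, \sD_\bullet$ are coevaluation and evaluation for the self-duality of $\sV_\bullet$.'' That is not a formal consequence of rigidity: for an arbitrary morphism $T\colon V\otimes V\to V$ in a pivotal category, the left and right single-leg rotations of $T$ need not coincide. Here they do coincide a priori only up to a scalar $c\in\Q(q)^\times$, because $\Hom_{U_q}(\sV_q,\sV_q^{\otimes 2})$ is one-dimensional, and one must actually prove $c=1$. The paper's proof of this is the longest single step: it first shows both rotations are nonzero, reduces to showing their post-compositions with $\sT$ agree, and then runs a diagrammatic computation using the already-verified relations from \cref{chess}, \cref{turvy}, and \cref{pokey}. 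Your proposal as written omits this computation entirely, so it does not establish that the assignment respects the defining relations of $\Tcat$.

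Two smaller remarks. First, your justification ``the last two equalities of \cref{vortex} hold because $\sR_\bullet$ is a module morphism'' is not quite the right reason; what is needed is naturality of the braiding together with the duality structure, from which the paper derives the slide-over-cap identity in one line and then post-composes with a crossing. Second, for the $\phi$-normalisation you only need $\phi_0\in A^\times$ (equivalently, $\phi_0$ nonzero at $q=1$); the stronger condition $\phi_0\equiv\phi\pmod{q-1}$ that you state is a byproduct rather than a requirement, since $a=\phi_0/\phi\in A^\times$ already whenever $\phi_0\in A^\times$.
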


\begin{proof}
    For now, we fix any choice of $\sD_\bullet$.  We will then rescale it at the end of the proof.

    If the functors are well defined, it follows immediately from the definitions of $\sT_\bullet$, $\sR_\bullet$, $\sC_\bullet$, and $\sD_\bullet$ that the diagram \cref{teatime} commutes.  Thus we must verify that $\bFp_\bullet$ respects the relations \cref{vortex,venom,chess}.  Once we have shown that a relation is respected by $\bFp_A$, tensoring over $A$ with $\Q(q)$ or $\C$ shows that this relation is also respected by $\bFp_q$ and $\bFp_\C$, respectively.  So it suffices to consider $\bFp_A$.  Now, the morphism spaces in $\Mcat_A$ are free $A$-modules since, by definition, objects of $\Mcat_A$ are direct sums of $L_A(\lambda)$ for dominant integral weights $\lambda$, and $\Hom_{U_A}(L_A(\lambda), L_A(\mu))$ is a free $A$-module of rank one if $\lambda = \mu$ and is zero otherwise.  Thus, a morphism is zero in $\Mcat_A$ if and only if it image under $- \otimes_A \Q(q)$ is zero.  Thus it also suffices to show that a relation is respected by $\bFp_q$.

    It is well known that the $R$-matrix yields a braiding on the category $\Mcat_q$.  (See, for example, \cite[Ch.~32]{Lus10}.)  The relations \cref{venom} then follow immediately.  The last two equalities in \cref{vortex} also follow from this braided structure.  Indeed, it follows from the property of a braided monoidal category that
    \[
        \bFp_q \left(
            \begin{tikzpicture}[anchorbase]
                \draw (-0.3,-0.4) to[out=up,in=down] (0,0) arc(180:0:0.15) to[out=down,in=up] (0,-0.4);
                \draw[wipe] (0.3,-0.4) to[out=up,in=down] (-0.3,0.4);
                \draw (0.3,-0.4) to[out=up,in=down] (-0.3,0.4);
            \end{tikzpicture}
        \right)
        =
        \bFp_q \left(
            \begin{tikzpicture}[anchorbase]
                \draw (-0.3,-0.4) -- (-0.3,-0.2) arc(180:0:0.15) -- (0,-0.4);
                \draw (0.3,-0.4) -- (0.3,0.4);
            \end{tikzpicture}
        \right).
    \]
    Composing on the bottom with $\bFp_q \left( \idstrand\ \poscross \right)$ then shows that the penultimate equality in \cref{vortex} holds.  The verification of the last equality in \cref{vortex} is similar.  In fact, an analogous argument shows directly that the first two relations in \cref{turvy} are also respected by $\bFp_q$.

    The first relation in \cref{chess} follows from \cref{curly,cider}.  For the fourth relation in \cref{chess}, we compute
    \begin{multline*}
        \bFp_q\left( \bubble \right)
        = \sD_q \circ \sC_q \colon \Q(q) \to \Q(q),\quad
        1 \mapsto \sum_{v \in \bB_\sV} v \otimes v^\vee
        \mapsto \sum_{v \in \bB_\sV} \sD_q(v \otimes v^\vee)
        \\
        \overset{\cref{trolloc}}{=} \sum_{v \in \bB_\sV} \sD_q(v^\vee \otimes K_{-2\rho}(v))
        = \qdim \sV_q
        = \delta.
    \end{multline*}
    The fifth relation in \cref{chess} follows from the fact that there are no nonzero $U_q$-module homomorphisms from the trivial module to $\sV_q$ by Schur's lemma.

    Next we show that the second equality in \cref{chess} is preserved.  Since $\Hom_{U_q}(\sV_q^{\otimes 2}, \sV_q)$ is one-dimensional, we have
    \[
        \bFp_q
        \left(
            \begin{tikzpicture}[anchorbase]
                \draw (0.2,-0.5) to [out=135,in=down] (-0.15,-0.2) to[out=up,in=-135] (0,0);
                \draw[wipe] (-0.2,-0.5) to[out=45,in=down] (0.15,-0.2);
                \draw (-0.2,-0.5) to[out=45,in=down] (0.15,-0.2) to[out=up,in=-45] (0,0) -- (0,0.2);
            \end{tikzpicture}
        \right)
        = \tau\, \bFp_q \left( \mergemor \right)
    \]
    for some scalar $\tau$.  Then \cref{forked} implies that $\tau^2 = q^{24}$, and so $\tau = \pm q^{12}$.  Now, since $\Hom_{U_\C}(\sV_\C^{\otimes 2}, \sV_\C)$ is also one-dimensional, $\bFp_\C(\mergemor)$ must be a nonzero multiple of $\Phi(\mergemor)$ in the notation of \cite[Th.~5.1]{GSZ21}.  Since $\sR_q$ specializes to $\flip$ at $q=1$, it then follows from \cite[Th.~5.1]{GSZ21} that
    \[
        \bFp_\C
        \left(
            \begin{tikzpicture}[anchorbase]
                \draw (0.2,-0.5) to [out=135,in=down] (-0.15,-0.2) to[out=up,in=-135] (0,0);
                \draw[wipe] (-0.2,-0.5) to[out=45,in=down] (0.15,-0.2);
                \draw (-0.2,-0.5) to[out=45,in=down] (0.15,-0.2) to[out=up,in=-45] (0,0) -- (0,0.2);
            \end{tikzpicture}
        \right)
        = \bFp_\C \left( \mergemor \right).
    \]
    Hence $\tau = 1$ when $q=1$, and so $\tau = q^{12}$, as desired.

    The first two equalities in \cref{vortex} follow from \cref{snail}.  Next we will show that the fourth equality in \cref{vortex} holds.  Since we have already shown that the first two equalities in \cref{vortex} are respected by $\bFp_q$, we have
    \[
        \bFp_q \left(
            \begin{tikzpicture}[anchorbase]
                \draw (-0.7,-0.3) -- (-0.7,0) arc(180:0:0.15) to[out=down,in=180] (-0.2,-0.2) to[out=0,in=225] (0,0);
                \draw (0,0) -- (0,0.2);
                \draw (0.3,-0.3) -- (0,0);
            \end{tikzpicture}
        \right)
        =
        \bFp_q \left( \mergemor \right)
        =
        \bFp_q \left(
            \begin{tikzpicture}[anchorbase]
                \draw (0.7,-0.3) -- (0.7,0) arc(0:180:0.15) to[out=down,in=0] (0.2,-0.2) to[out=180,in=-45] (0,0);
                \draw (0,0) -- (0,0.2);
                \draw (-0.3,-0.3) -- (0,0);
            \end{tikzpicture}
        \right).
    \]
    It follows that
    \begin{equation} \label{mugs}
        \bFp_q \left(
            \begin{tikzpicture}[anchorbase]
                \draw (-0.4,0.2) to[out=down,in=180] (-0.2,-0.2) to[out=0,in=225] (0,0);
                \draw (0,0) -- (0,0.2);
                \draw (0.3,-0.3) -- (0,0);
            \end{tikzpicture}
        \right)
        \qquad \text{and} \qquad
        \bFp_q
        \left(
            \begin{tikzpicture}[anchorbase]
                \draw (0.4,0.2) to[out=down,in=0] (0.2,-0.2) to[out=180,in=-45] (0,0);
                \draw (0,0) -- (0,0.2);
                \draw (-0.3,-0.3) -- (0,0);
            \end{tikzpicture}
        \right)
    \end{equation}
    are both nonzero.  Thus, since $\Hom_{U_q}(\sV_q, \sV_q^{\otimes 2})$ is one-dimensional, there exists $c \in \Q(q)$ such that
    \[
        \bFp_q \left(
            \begin{tikzpicture}[anchorbase]
                \draw (-0.4,0.2) to[out=down,in=180] (-0.2,-0.2) to[out=0,in=225] (0,0);
                \draw (0,0) -- (0,0.2);
                \draw (0.3,-0.3) -- (0,0);
            \end{tikzpicture}
        \right)
        = c
        \bFp_q
        \left(
            \begin{tikzpicture}[anchorbase]
                \draw (0.4,0.2) to[out=down,in=0] (0.2,-0.2) to[out=180,in=-45] (0,0);
                \draw (0,0) -- (0,0.2);
                \draw (-0.3,-0.3) -- (0,0);
            \end{tikzpicture}
        \right).
    \]
    It remains to show that $c=1$.  Since we know that composition induces a nonzero linear map
    \[
        \Hom_{U_q}(\sV_q^{\otimes 2}, \sV_q) \otimes \Hom_{U_q}(\sV_q \otimes \sV_q^{\otimes 2})
        \xrightarrow{\cong} \End_{U_q}(\sV_q) \cong \Q(q),
    \]
    it follows that the composition of each of the maps in \cref{mugs} with $\sT$ is nonzero.  Hence it suffices to show that these two compositions are equal.  Now, a quick look at the proof of \cref{windy} shows that we have shown enough relations are respected by $\bFp_q$ to know that \cref{turvy,pokey} are respected.  Similarly, we have demonstrated enough relations (namely, the first, second, and last two equalities in \cref{vortex}, and all of the equalities in \cref{turvy}) to know that rotated versions of the first equality in \cref{chess} and the fourth equality in \cref{turvy} are respected by $\bFp_q$.   In particular, $\bFp_q$ respects all of the relations used in the following string of equalities in $\Fcat$:
    \[
        \begin{tikzpicture}[anchorbase]
            \draw (-0.2,0.5) -- (-0.2,0.3) -- (0,0) -- (0.2,-0.5);
            \draw (0,0) to[out=235,in=0] (-0.2,-0.3) to[out=180,in=225] (-0.2,0.3);
        \end{tikzpicture}
        \overset{\cref{chess}}{=} q^{-24}
        \begin{tikzpicture}[anchorbase]
            \draw (-0.3,-0.3) to[out=-90,in=-90,looseness=1.5] (-0.1,-0.3) to[out=90,in=225] (-0.2,0.3);
            \draw[wipe] (0,0) to[out=235,in=90] (-0.3,-0.3);
            \draw (0,0) to[out=235,in=90] (-0.3,-0.3);
            \draw (-0.2,0.5) -- (-0.2,0.3) -- (0,0) -- (0.2,-0.5);
        \end{tikzpicture}
        \overset{\cref{turvy}}{=} q^{-12}
        \begin{tikzpicture}[anchorbase]
            \draw (-0.3,-0.3) to[out=-90,in=-90,looseness=1.5] (-0.1,-0.3) to[out=90,in=-45] (-0.2,0.3);
            \draw[wipe] (0.1,0) to[out=235,in=90] (-0.3,-0.3);
            \draw[wipe] (0.1,0) to[out=150,in=225,looseness=1.5] (-0.2,0.3);
            \draw (0.1,0) to[out=235,in=90] (-0.3,-0.3);
            \draw (0.1,0) to[out=150,in=225,looseness=1.5] (-0.2,0.3);
            \draw (-0.2,0.3) -- (-0.2,0.5);
            \draw (0.1,0) -- (0.2,-0.5);
        \end{tikzpicture}
        \overset{\cref{pokey}}{=} q^{-12}\,
        \begin{tikzpicture}[anchorbase]
            \draw (-0.1,0) to[out=150,in=225,looseness=1.5] (-0.2,0.3);
            \draw (-0.2,0.3) -- (-0.2,0.5);
            \draw (-0.1,0) to[out=-135,in=180,looseness=1.5] (-0.1,-0.4) to[out=0,in=-90] (0.1,0) to[out=90,in=-45] (-0.2,0.3);
            \draw[wipe] (-0.025,-0.125) -- (0.2,-0.5);
            \draw (-0.1,0) -- (0.2,-0.5);
        \end{tikzpicture}
        \overset{\cref{turvy}}{\underset{\cref{chess}}{=}}
        \begin{tikzpicture}[anchorbase]
            \draw (0.2,0.5) -- (0.2,0.3) -- (0,0) -- (-0.2,-0.5);
            \draw (0,0) to[out=-55,in=180] (0.2,-0.3) to[out=0,in=-45] (0.2,0.3);
        \end{tikzpicture}
        \ ,
    \]
    where, in the third equality, we used the relation
    \[
        \begin{tikzpicture}[anchorbase]
            \draw (0.2,0.3) to[out=225,in=90] (-0.2,0) to[out=-90,in=135] (0,-0.3);
            \draw[wipe] (0,0.3) -- (0,0) -- (-0.3,-0.3);
            \draw (0,0.3) -- (0,0) -- (-0.3,-0.3);
            \draw (0,0) -- (0.3,-0.3);
        \end{tikzpicture}
        =
        \begin{tikzpicture}[anchorbase]
            \draw (0.2,0.3) to[out=-90,in=45] (0,-0.3);
            \draw (0,0.3) -- (0,0) -- (-0.3,-0.3);
            \draw[wipe] (0,0) -- (0.3,-0.3);
            \draw (0,0) -- (0.3,-0.3);
        \end{tikzpicture}
        \ ,
    \]
    which follows from composing both sides of the second relation in \cref{pokey} on the bottom with $\poscross\ \idstrand$\ .  Thus $c=1$ as desired.

    It remains to verify the third relation in \cref{chess}, and this is where we will fix the scaling of $\sD_\bullet$.  Since $\Hom_{U_A}(\sV_A,\sV_A)$ is free of rank one, spanned by $1_{\sV_q}$, we have
    \[
        \bFp_A
        \left(
            \begin{tikzpicture}[centerzero]
                \draw  (0,-0.4) -- (0,-0.2) to[out=45,in=down] (0.15,0) to[out=up,in=-45] (0,0.2) -- (0,0.4);
                \draw (0,-0.2) to[out=135,in=down] (-0.15,0) to[out=up,in=-135] (0,0.2);
            \end{tikzpicture}
        \right)
        = \phi' 1_{\sV_A}
        = \phi' \bFp_A \left(\, \idstrand\, \right)
    \]
    for some $\phi' \in A$.  Now, the composition
    \[
        \Hom_{U_\C}(\sV_\C, \sV_\C^{\otimes 2}) \otimes_\C \Hom_{U_\C}(\sV_\C^{\otimes 2}, \sV_\C)
        \to \Hom_{U_\C}(\sV_\C, \sV_\C)
    \]
    is an isomorphism, since all the hom-spaces involved are one-dimensional.  Thus, $\phi'$ must be nonzero at $q=1$, and so $\phi' \in A^\times$.  We now scale $\sD_\bullet$ by $\phi'/\phi$.  This scales $\sC_\bullet$ by $\phi/\phi'$, which, by the definition of $\splitmor$ in \cref{vortex}, also scales $\bFp_\bullet \left( \splitmor \right)$ by $\phi/\phi'$.  Then the third relation in \cref{chess} is satisfied.
\end{proof}

\begin{rem} \label{degenerate4}
    In light of \cref{degenerate3}, \cite[Th.~5.1]{GSZ21} gives a monoidal functor
    \[
        \Phi \colon \Tcat_\C \to \Mcat_\C.
    \]
    As noted in the proof of \cref{groundwork}, we have $\bFp_\C(\poscross) = \flip = \Phi(\crossmor)$.  Since
    \[
        \dim \Hom_{U_\C}(\sV_\C^{\otimes 2}, \sV_\C)
        = \dim \Hom_{U_\C}(\C,\sV_\C^{\otimes 2})
        = \dim \Hom_{U_\C}(\sV_\C^{\otimes 2}, \C)
        = 1,
    \]
    we also see that $\bFp_\C(\mergemor)$, $\bFp_\C(\cupmor)$, and $\bFp_\C(\capmor)$ are equal to nonzero scalar multiples of $\Phi(\mergemor)$, $\Phi(\cupmor)$, and $\Phi(\capmor)$, respectively.  It follows that the images of $\bFp_\C$ and $\Phi$ are equal.  In fact, we could scale the choice of $v_{-\lambda}$ made before \cref{skates} and the choice of $v_\sT$ made before \cref{lobster}
    by elements of $\C^\times$ to set these scalars equal to $1$, so that $\bFp_\C = \Phi$.
\end{rem}

\begin{prop} \label{window}
    The images under $\bFp_q$ of
    \begin{equation} \label{4pointers}
        \jail\ ,\quad \hourglass\ ,\quad \Imor\ ,\quad \Hmor\ ,\quad \poscross + \negcross
    \end{equation}
    form a basis of $\Hom_{U_q}(\sV_q^{\otimes 2}, \sV_q^{\otimes 2})$ and the images under $\bFp_q$ of the last 10 morphisms in \cref{5pointers} are linearly independent in $\Hom_{U_q}(\sV_q^{\otimes 2}, \sV_q^{\otimes 3})$.
\end{prop}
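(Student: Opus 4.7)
The plan is to reduce both statements to the classical setting ($q=1$) using the $A$-form of the relevant $\Hom$-spaces together with the commutative diagram of \cref{teatime}.

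First, I would establish a general base-change principle. For any $k \in \N$, \cref{business} and \cref{plug} together imply that $\Hom_{U_A}(\sV_A^{\otimes 2}, \sV_A^{\otimes k})$ is a free $A$-module whose rank equals both $\dim_{\Q(q)} \Hom_{U_q}(\sV_q^{\otimes 2}, \sV_q^{\otimes k})$ and $\dim_\C \Hom_{U_\C}(\sV_\C^{\otimes 2}, \sV_\C^{\otimes k})$. The principle reads: if morphisms $f_1, \dotsc, f_n$ of $\Tcat$ have images under $\bFp_A$ whose specializations at $q = 1$ are linearly independent in $\Hom_{U_\C}(\sV_\C^{\otimes 2}, \sV_\C^{\otimes k})$, then their images under $\bFp_q$ are linearly independent in $\Hom_{U_q}(\sV_q^{\otimes 2}, \sV_q^{\otimes k})$. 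To prove this, given any putative $\Q(q)$-linear dependence one clears denominators and then divides by the largest power of $q - 1$ dividing all coefficients (possible since $A$ is a local principal ideal domain with maximal ideal $(q-1)A$); this yields an $A$-linear dependence in $\Hom_{U_A}$ with not all coefficients in $(q-1)A$, whose reduction modulo $(q-1)$ gives a nontrivial linear dependence in $\Hom_{U_\C}$, contradicting the hypothesis.

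Second, I would identify the images at $q = 1$. Since $\Theta = 1 \otimes 1$ at $q = 1$, we have $\bFp_\C(\poscross) = \bFp_\C(\negcross) = \flip$, and hence $\bFp_\C(\poscross + \negcross) = 2\flip$. The remaining generators involve no crossings, and \cref{degenerate3} identifies $\bFp_\C$ with the classical functor $\Fcat_{7,26} \to U(\fg)\md$ of \cite{GSZ21} after appropriate rescaling. Consequently, at $q = 1$ the five morphisms \cref{4pointers} become the classical diagrams $\jail, \hourglass, \Hmor, \Imor, 2\,\crossmor$, and the last ten morphisms of \cref{5pointers} become their classical counterparts.

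Third, I would invoke the classical linear independence statements. By the classical analogue of \cref{lizard}, $\End_{U_\C}(\sV_\C^{\otimes 2})$ is $5$-dimensional, and the five classical morphisms $\jail, \hourglass, \Hmor, \Imor, \crossmor$ are shown in \cite{GSZ21} to form a basis (from the analysis of $\End_{\Fcat_{7,26}}(\go^{\otimes 2})$ together with fullness of the classical functor); rescaling the last by $2$ preserves the basis property. Combined with the base-change principle this yields the first claim, and the fact that the five morphisms form a basis then follows from a dimension count. For the second claim, the linear independence of the ten classical diagrams in $\Hom_{U_\C}(\sV_\C^{\otimes 2}, \sV_\C^{\otimes 3})$ follows from the corresponding analysis in \cite{GSZ21} (the classical counterpart of the setup in \cref{protomolecule}), and combining with the base-change principle gives the required independence.

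The main obstacle is extracting the two classical linear independence statements from \cite{GSZ21} in precisely the required form. Should either not be stated directly, the fallback is an explicit matrix computation in an ordered weight basis of $\sV_\C$ (for instance by computing the rank of the Gram matrix of pairings with a known basis of the corresponding $\Hom$-space) using SageMath, matching the computational style employed throughout the paper.
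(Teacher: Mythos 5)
Your proposal is correct and takes essentially the same route as the paper: reduce the $\Q(q)$-linear independence to linear independence at $q=1$ via the $A$-form, then quote the classical results of \cite{GSZ21}. In fact you spell out the one step the paper leaves implicit — namely, that after clearing denominators one may divide by the largest power of $q-1$ dividing all coefficients, so that the resulting $A$-linear relation has a nontrivial image modulo $(q-1)$ — which is precisely what makes the passage from $\Hom_{U_A}$ to $\Hom_{U_\C}$ conclusive.
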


\begin{proof}
    We have
    \[
        \dim_{\Q(q)} \Hom_{U_q}(\sV_q^{\otimes 2}, \sV_q^{\otimes 2}) = 5
        \qquad \text{and} \qquad
        \dim_{\Q(q)} \Hom_{U_q}(\sV_q^{\otimes 2}, \sV_q^{\otimes 3}) = 15.
    \]
    (See, for example, \cite[(4.14)]{GSZ21} and use \cref{business}.)  Therefore it suffices to prove that the morphisms \cref{4pointers} are linearly independent, and that the last 10 morphisms in \cref{5pointers} are linearly independent.  Any linear dependence relation in $\Hom_{U_q}(\sV_q^{\otimes 2}, \sV_q^{\otimes 2})$ gives, after clearing denominators if necessary, a linear dependence relation in $\Hom_{U_A}(\sV_A^{\otimes 2}, \sV_A^{\otimes 2})$.  Hence, it is enough to prove that the images under $\bFp_A$ of the diagrams \cref{4pointers} are linearly independent.  But this follows immediately from the fact that they are linearly independent at $q=1$ (that is, after passing to $\Hom_{U_\C}(\sV_\C^{\otimes 2}, \sV_\C^{\otimes 2})$ by tensoring over $A$ with $\C$) by \cite[Prop.~3.2]{GSZ21}.  The argument for the last 10 morphisms in \cref{5pointers} is analogous, using the fact that they are also linearly independent at $q=1$, as shown in the proof of \cite[Lem.~3.7]{GSZ21}.
\end{proof}

\begin{prop} \label{fillerup}
    The functor $\bFp_q$ of \cref{groundwork} is full.
\end{prop}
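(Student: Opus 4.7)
The plan is to bootstrap fullness from the classical $q=1$ setting established in \cite{GSZ21}, using the integral form diagram \eqref{teatime} together with Nakayama's lemma over the local ring $A$. I reduce to showing surjectivity of each $\bFp_q \colon \Tcat_q(\go^{\otimes m}, \go^{\otimes n}) \to \Hom_{U_q}(\sV_q^{\otimes m}, \sV_q^{\otimes n})$, denote by $d = d_{m,n}$ its common dimension with $\Hom_{U_\C}(\sV_\C^{\otimes m}, \sV_\C^{\otimes n})$ afforded by \eqref{business}, and set $H := \Hom_{U_A}(\sV_A^{\otimes m}, \sV_A^{\otimes n})$. Since $H$ is a finitely generated $A$-module with $H \otimes_A \Q(q)$ and $H \otimes_A \C$ both of dimension $d$, \cref{plug} implies $H$ is a free $A$-module of rank $d$.

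Next, at $q=1$ the $R$-matrix $\sR_\C$ equals the tensor flip, so $\bFp_\C$ sends $\poscross$ and $\negcross$ to the same morphism, hence factors through $\Tcat_\C / \{ \poscross = \negcross \}$; by \cref{degenerate} this is the category $\Tcat_{7,26}$ of \cite{GSZ21}, and the further relations defining the quotient $\Fcat$ there are automatically satisfied in $\Mcat_\C$, so the factorization descends through $\Fcat$. Since the induced functor $\Fcat \to \Mcat_\C$ is full by \cite{GSZ21}, so is $\bFp_\C$. I may therefore choose $d$ morphisms $\alpha_1, \dotsc, \alpha_d \in \Tcat_\C(\go^{\otimes m}, \go^{\otimes n})$ whose $\bFp_\C$-images form a $\C$-basis of $\Hom_{U_\C}(\sV_\C^{\otimes m}, \sV_\C^{\otimes n})$. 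By the definition $\Tcat_\C := \Tcat_A \otimes_A \C$ from \cref{degenerate3}, each $\alpha_i$ admits a lift $\tilde\alpha_i \in \Tcat_A(\go^{\otimes m}, \go^{\otimes n})$, and commutativity of \eqref{teatime} yields $\bFp_A(\tilde\alpha_i) \otimes_A \C = \bFp_\C(\alpha_i)$.

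To conclude, let $N \subseteq H$ be the $A$-submodule spanned by the $\bFp_A(\tilde\alpha_i)$. Then $N \otimes_A \C = H \otimes_A \C$, so $H / N$ is a finitely generated $A$-module killed by the residue field $\C$; Nakayama's lemma applied to the local ring $A$ with maximal ideal $(q-1)A$ forces $H / N = 0$, and the images $\bFp_q(\tilde\alpha_i) = \bFp_A(\tilde\alpha_i) \otimes_A \Q(q)$ consequently span $\Hom_{U_q}(\sV_q^{\otimes m}, \sV_q^{\otimes n})$ as a $\Q(q)$-vector space, establishing fullness of $\bFp_q$. The main obstacle I anticipate is verifying that $\bFp_\C$ factors cleanly through $\Fcat$ — that the additional relations imposed on $\Fcat$ relative to $\Tcat_{7,26}$ are satisfied in $\Mcat_\C$ — but this is automatic from the existence of the full functor $\Fcat \to \Mcat_\C$ in \cite{GSZ21}, which presupposes precisely that those relations hold in $\Mcat_\C$.
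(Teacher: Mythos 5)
Your argument is correct and reaches the same conclusion, but the final bookkeeping step differs genuinely from the paper's. The paper lifts a basis from the $\C$-fiber exactly as you do, but then argues via \emph{linear independence}: the $\bFp_A(f)$ must be linearly independent over $A$ (any dependence, after clearing the common power of $q-1$, would descend to a nontrivial dependence over $\C$), hence linearly independent over $\Q(q)$, hence a basis by the dimension count \cref{business}. You instead argue via \emph{spanning}: you invoke \cref{plug} to identify $H = \Hom_{U_A}(\sV_A^{\otimes m},\sV_A^{\otimes n})$ as a free $A$-module of the right rank and then apply Nakayama's lemma over the local ring $A$ to conclude that the $\bFp_A(\tilde\alpha_i)$ generate $H$, from which surjectivity over $\Q(q)$ follows. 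Both routes are valid and comparable in length; the paper's avoids \cref{plug} and Nakayama at the cost of the dimension count \cref{business}, while yours establishes the slightly stronger fact that $\bFp_A$ itself is surjective onto $H$. Two small tightenings: the phrase ``$N \otimes_A \C = H \otimes_A \C$'' should be read as asserting that the composite $N \hookrightarrow H \twoheadrightarrow H \otimes_A \C$ is surjective --- tensoring a submodule is not injective a priori, but right-exactness of $-\otimes_A\C$ gives $(H/N)\otimes_A\C = 0$, which is all Nakayama needs --- and your re-derivation of the fullness of $\bFp_\C$, while sound, is unnecessary: the paper simply cites \cite[Prop.~5.2]{GSZ21}, having already identified $\bFp_\C$ with the functor of \cite[Th.~5.1]{GSZ21} (up to the $\phi$-rescaling noted in \cref{degenerate3}).
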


\begin{proof}
    Fix $m,n \in \N$.  The functor $\bFp_\C$ is full by \cite[Prop.~5.2]{GSZ21} and \cref{degenerate4}.  Since the diagram \cref{teatime} commutes, this implies that there is a set $X$ of morphism in $\Tcat_A$ such that $\bFp_A(f) \otimes_A \C$, $f \in X$, are a basis for $\Mcat_\C(\sV_\C^{\otimes m}, \sV_\C^{\otimes n})$.  Thus the morphisms $\bFp_A(f)$, $f \in X$, are linearly independent in $\Mcat_A(\sV_A^{\otimes m}, \sV_A^{\otimes n})$.  Hence the morphisms $\bFp_q(f) = \bFp_A(f) \otimes_A \Q(q)$, $f \in X$, are linearly independent in $\Mcat_q(\sV_A^{\otimes m}, \sV_A^{\otimes n})$.  Thus, by \cref{business}, they are a basis.
\end{proof}

\begin{theo} \label{lightning}
    The functor $\bFp_q$ of \cref{groundwork} factors through $\Fcat_q$ to give a full functor
    \[
        \bF_q \colon \Fcat_q \to \Mcat_q.
    \]
\end{theo}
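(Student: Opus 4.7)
The plan is to define $\cI := \ker \bFp_q$, the tensor ideal of morphisms in $\Tcat_q$ sent to zero by the monoidal functor from \cref{groundwork}, and then verify that every additional defining relation of $\Fcat_q$ (relative to $\Tcat_q$) automatically holds modulo $\cI$. Since $\cI$ is the kernel of a monoidal functor, it is automatically a tensor ideal, and the induced functor $\Tcat_q/\cI \to \Mcat_q$ is faithful by construction.

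The first task is to check that $\cI$ satisfies \cref{expanse}. The non-triviality condition \cref{lager} is a direct computation in $\Q(q)$ from the explicit formula for $\delta$ in \cref{cherry}. For \cref{IPA}, the first half of \cref{window} says that the images under $\bFp_q$ of the five morphisms in \cref{4pointers} are linearly independent in $\Hom_{U_q}(\sV_q^{\otimes 2}, \sV_q^{\otimes 2})$, which is $5$-dimensional. Faithfulness of the induced functor gives an injection $(\Tcat_q/\cI)(\go^{\otimes 2}, \go^{\otimes 2}) \hookrightarrow \Hom_{U_q}(\sV_q^{\otimes 2}, \sV_q^{\otimes 2})$, and the five morphisms lie in the source, forcing the source to be exactly $5$-dimensional with these morphisms as a basis.

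Once \cref{expanse} is verified for $\cI$, the results of \cref{sec:2point,sec:specialize} apply with the parameters fixed by \cref{carpool,cherry}. Lemma \ref{zbalance} gives $z_1 = z_2 = (q^{-4}-q^4)/[3]$, so combining this with \cref{skeinexist} yields \cref{symskein} in $\Tcat_q/\cI$; \cref{bull} yields \cref{qnuke}; and \cref{bagel} yields \cref{qsquare}. For the pentagon relation \cref{qpent} I invoke \cref{protomolecule}: its dimension hypothesis \cref{5dim15} holds since faithfulness of $\Tcat_q/\cI \to \Mcat_q$, together with the pivotal isomorphism \cref{twirl} and the equality $\dim \Hom_{U_q}(\sV_q^{\otimes 2}, \sV_q^{\otimes 3}) = 15$ recalled inside \cref{window}, gives $\dim((\Tcat_q/\cI)(\go^{\otimes 3}, \go^{\otimes 2})) \le 15$; the linear independence of the last ten morphisms in \cref{5pointers} is precisely the content of the second half of \cref{window}.

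With all the extra relations of \cref{Fdef} verified in $\Tcat_q/\cI$, the functor $\bFp_q$ factors uniquely through the quotient to give a monoidal functor $\bF_q \colon \Fcat_q \to \Mcat_q$. Fullness is then automatic: by \cref{fillerup} the functor $\bFp_q$ is already full, and it factors as $\Tcat_q \twoheadrightarrow \Fcat_q \xrightarrow{\bF_q} \Mcat_q$, so $\bF_q$ must also be full. There is no substantive obstacle left at this stage; the real work has been front-loaded into \cref{sec:2point,sec:specialize,sec:F4,window,fillerup}, and this theorem is essentially a matter of packaging those results together.
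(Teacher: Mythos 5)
Your proposal is correct and follows essentially the same approach as the paper's proof: set $\cI := \ker \bFp_q$, use \cref{window} to verify \cref{expanse} and the hypotheses of \cref{protomolecule} via faithfulness of the induced functor $\Tcat_q/\cI \to \Mcat_q$, deduce the four extra relations from \cref{skeinexist,zbalance,bull,bagel,protomolecule}, and get fullness from \cref{fillerup}. The only difference is that you spell out the faithfulness and dimension-counting arguments in more detail than the paper does, which is harmless.
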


\begin{proof}
    If $\bFp_q$ factors through $\Fcat_q$, then the induced functor $\bF_q$ is full by \cref{fillerup}.  Let $\cI$ be the kernel of the functor $\bFp_q$ of \cref{groundwork}.  We must show that relations \cref{symskein,qnuke,qsquare,qpent} are satisfied in $\Tcat_q/\cI$.  It follows from \cref{window} that \cref{expanse} and the hypotheses of \cref{protomolecule} are satisfied.  Then
    \begin{itemize}
        \item \cref{symskein} holds in $\Tcat_q/\cI$ by \cref{skeinexist,zbalance};
        \item \cref{qnuke} holds by \cref{bull};
        \item \cref{qsquare} holds by \cref{bagel};
        \item \cref{qpent} holds by \cref{protomolecule}. \qedhere
    \end{itemize}
\end{proof}

Since the category $\Mcat_q$ is idempotent complete, $\bF_q$ induces a functor
\[
    \Kar (\bF_q) \colon \Kar(\Fcat_q) \to \Mcat_q,
\]
where $\Kar$ denotes the additive Karoubi envelope (that is, the idempotent completion of the additive envelope).

\begin{theo} \label{westboro}
    The functor $\Kar(\bF_q)$ is full and essentially surjective.
\end{theo}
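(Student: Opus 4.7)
The statement has two parts: fullness and essential surjectivity of $\Kar(\bF_q)$.

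Fullness of $\Kar(\bF_q)$ is a formal consequence of fullness of $\bF_q$ established in \cref{lightning}. Given objects $(X, e), (Y, e') \in \Kar(\Fcat_q)$ and a morphism $g \colon \im \bF_q(e) \to \im \bF_q(e')$ in $\Mcat_q$, composition with the canonical projection $\bF_q(X) \twoheadrightarrow \im \bF_q(e)$ and inclusion $\im \bF_q(e') \hookrightarrow \bF_q(Y)$ extends $g$ to $\tilde{g} \colon \bF_q(X) \to \bF_q(Y)$ satisfying $\bF_q(e')\, \tilde{g} = \tilde{g} = \tilde{g}\, \bF_q(e)$. Fullness of $\bF_q$ produces $h \in \Fcat_q(X,Y)$ with $\bF_q(h) = \tilde{g}$, and then $e' \circ h \circ e$ defines a morphism $(X,e) \to (Y,e')$ in $\Kar(\Fcat_q)$ whose image under $\Kar(\bF_q)$ is $g$.

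For essential surjectivity, I would first invoke the observation recorded in the introduction: since $F_4$ has trivial center, the nontrivial representation $\sV_q$ is faithful on the simply-connected compact group, so every finite-dimensional simple $U_q(\fg)$-module of type $1$ occurs as a direct summand of some $\sV_q^{\otimes n}$. Combined with semisimplicity of $\Mcat_q$, every $M \in \Mcat_q$ is a direct summand of $\bF_q(X)$ for some $X \in \Add(\Fcat_q)$. Let $p \in \End_{U_q}(\bF_q(X))$ denote the projection idempotent onto $M$; by fullness of $\bF_q$, pick $\hat{p} \in \End_{\Fcat_q}(X)$ with $\bF_q(\hat{p}) = p$. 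The remaining task is to replace $\hat{p}$ by an honest idempotent $e \in \End_{\Fcat_q}(X)$ with $\bF_q(e) = p$, after which $\Kar(\bF_q)(X, e) = \im \bF_q(e) \cong M$, yielding essential surjectivity.

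The main obstacle is therefore idempotent lifting along the surjection of $\Q(q)$-algebras $\End_{\Fcat_q}(X) \twoheadrightarrow \End_{U_q}(\bF_q(X))$. The target is finite-dimensional and semisimple, so the standard two-step lifting (modulo the Jacobson radical, then across the resulting surjection of semisimple algebras) applies as soon as $\End_{\Fcat_q}(X)$ is known to be finite-dimensional over $\Q(q)$. I would deduce this finite-dimensionality by comparison with the classical case through the framework of \cref{groundwork} and the commutative diagram \cref{teatime}: the $A$-form of the endomorphism algebra specializes at $q=1$ onto the corresponding classical endomorphism algebra in $\Fcat_{7,26}$, which is finite-dimensional by the results of \cite{GSZ21}; a generic-rank argument in the spirit of \cref{plug}, together with Nakayama-style reduction to a finite generating set of diagrams modulo the defining relations, then bounds $\dim_{\Q(q)} \End_{\Fcat_q}(X)$ by the classical dimension, completing the proof.
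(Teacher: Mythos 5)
Your fullness argument is correct and agrees with the paper, which likewise observes that fullness of $\Kar(\bF_q)$ is a formal consequence of fullness of $\bF_q$.

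For essential surjectivity there is a real gap. You correctly identify idempotent lifting along the surjection $\End_{\Fcat_q}(X)\twoheadrightarrow\End_{U_q}(\bF_q(X))$ as the crux, and you are right that finite-dimensionality of $\End_{\Fcat_q}(X)$ over $\Q(q)$ would let the standard lifting argument go through. But that finite-dimensionality is not available. It would follow from \cref{wish}, which the paper explicitly leaves as a conjecture, and the classical fact you invoke --- that the endomorphism algebras of $\Fcat_{7,26}$ are finite-dimensional ``by the results of \cite{GSZ21}'' --- is not established there either: it is exactly the content of \cite[Conj.~5.9]{GSZ21}, of which \cref{wish} is the quantum analogue. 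Your Nakayama/generic-rank sketch does not repair this, since Nakayama's lemma requires the $A$-form of $\End_{\Fcat_q}(\go^{\otimes n})$ to be a finitely generated $A$-module, which is precisely the finiteness you are trying to prove. Without such a hypothesis, lifting idempotents along a surjection onto a finite-dimensional semisimple algebra genuinely fails: already $\Q(q)[x]\twoheadrightarrow \Q(q)[x]/(x^2-x)$ admits no idempotent lifting $\bar{x}$, and one can promote this to a full $\Q(q)$-linear functor into a semisimple idempotent-complete category whose induced functor on Karoubi envelopes is not essentially surjective. The paper's own proof is simply a citation to \cite[Prop.~5.5]{GSZ21}, whose argument therefore cannot depend on finite-dimensionality of the source endomorphism algebras, so it must produce the needed idempotents by some other mechanism. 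As written, your route does not close the argument.
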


\begin{proof}
    Fullness of $\Kar(\bF_q)$ follows from the fact that $\bF_q$ is full, so it remains to show that $\Kar(\bF_q)$ is essentially surjective.  This argument is the same as the one in the proof of \cite[Prop.~5.5]{GSZ21}.
\end{proof}

Unfortunately, we do not know if the functor $\bF_q$ is faithful.  As we now explain, a key next step in this direction would be to determine if the defining relations of $\Fcat_q$ allow us to reduce any closed diagram to a scalar multiple of the empty diagram.

We refer the reader to \cite{Sel11} for an overview of various properties of monoidal categories and their diagrammatic interpretations.  In particular, see \cite[\S 4.4.3]{Sel11} for the definition of spherical pivotal category.  Since $\Fcat_q$ is a ribbon category, it is spherical pivotal.  In any spherical pivotal category $\cC$, we have a trace map $\Tr \colon \bigoplus_{X \in \cC} \End_\cC(X) \to \End_\cC(\one)$.  In terms of string diagrams, this corresponds to closing a diagram off to the right or left:
\begin{equation} \label{natal}
    \Tr
    \left(
        \begin{tikzpicture}[centerzero]
            \draw[line width=2] (0,-0.5) -- (0,0.5);
            \filldraw[fill=white,draw=black] (-0.25,0.2) rectangle (0.25,-0.2);
            \node at (0,0) {$\scriptstyle{f}$};
        \end{tikzpicture}
    \right)
    =
    \begin{tikzpicture}[centerzero]
        \draw[line width=2] (0,0.2) arc(180:0:0.3) -- (0.6,-0.2) arc(360:180:0.3);
        \filldraw[fill=white,draw=black] (-0.25,0.2) rectangle (0.25,-0.2);
        \node at (0,0) {$\scriptstyle{f}$};
    \end{tikzpicture}
    =
    \begin{tikzpicture}[centerzero]
        \draw[line width=2] (0,0.2) arc(0:180:0.3) -- (-0.6,-0.2) arc(180:360:0.3);
        \filldraw[fill=white,draw=black] (-0.25,0.2) rectangle (0.25,-0.2);
        \node at (0,0) {$\scriptstyle{f}$};
    \end{tikzpicture}
    \ ,
\end{equation}
where the second equality follows from the axioms of a spherical category.  We say that a morphism $f \in \Hom_\cC(X,Y)$ is \emph{negligible} if $\Tr(f \circ g) = 0$ for all $g \in \Hom_\cC(Y,X)$.  The negligible morphisms form a two-sided tensor ideal $\cN$ of $\cC$, and the quotient $\cC/\cN$ is called the \emph{semisimplification} of $\cC$.

\begin{prop} \label{graviola}
    Suppose $\kk$ is a field and $\cC$ is a $\kk$-linear spherical pivotal category such that $\End_\cC(\one) = \kk 1_\one \ne 0$.
    \begin{enumerate}
        \item The negligible morphisms form the unique maximal tensor ideal of $\cC$.

        \item If $\cD$ is a nonzero semisimple $\kk$-linear monoidal category, then the kernel of any full monoidal functor $\cC \to \cD$ is equal to the tensor ideal of negligible morphisms.
    \end{enumerate}
\end{prop}

\begin{proof}
    \begin{enumerate}[wide]
        \item First note that a tensor ideal of $\cC$ is equal to $\cC$ if and only if it contains $1_\one$.  Furthermore, the tensor ideal of negligible morphisms is not equal to $\cC$ since $\Tr(1_\one) = 1_\one \ne 0$ by our assumption on $\cC$.

            Now suppose $f \colon X \to Y$ is a morphism in $\cC$ that is not negligible.  Then there exists a morphism $g \colon Y \to X$ in $\cC$ such that $\Tr(f \circ g) = c 1_\one$ for some $c \in \kk^\times$.  Then $1_\one = c^{-1} \Tr(f \circ g)$ is contained in the tensor ideal generated by $f$ and so this tensor ideal is all of $\cC$.  It follows that the negligible morphisms form the unique maximal tensor ideal of $\cC$.

        \item This follows from the fact that any full monoidal functor sends negligible morphisms to negligible morphisms, and that a semisimple category has no nonzero negligible morphisms. \qedhere
    \end{enumerate}
\end{proof}

\begin{cor} \label{acai}
    If $\End_{\Fcat_q}(\one) = \C(q) 1_\one$, then $\ker(\bF_q)$ is equal to the tensor ideal of negligible morphisms, and the functor $\bF_q$ induces an equivalence of categories from the semisimplification of $\cC$ to $\Mcat_q$.
\end{cor}

\begin{proof}
    Since $\Mcat_q$ is a semisimple category, this follows from \cref{westboro,graviola}.
\end{proof}

We conclude this section with a conjecture concerning a basis for the morphism spaces of $\Fcat_q$.  A string diagram built from the generating morphisms \cref{lego} via tensor product and composition can be viewed as a graph embedded in the cube $[0,1]^3$, whose univalent vertices occur at points in $[0,1] \times \{0,1\} \times \{1\}$.  Here we view $\mergemor$ as a trivalent vertex and $\poscross$, $\negcross$ as two edges, one of which passes under the other.  If such a graph corresponds to a morphism in $\Fcat_q(\go^{\otimes m},\go^{\otimes n})$, then it has $m$ univalent vertices at points in $[0,1] \times \{0\} \times \{1\}$ and $n$ univalent vertices at points in $[0,1] \times \{1\} \times \{1\}$.  We number the univalent vertices of the diagram clockwise, starting at the leftmost top vertex.  We then say that the diagram is \emph{normally-stacked component-planar} if it satisfies the following conditions:
\begin{itemize}
    \item All connected components have at least two univalent vertices.
    \item All connected components are planar; that is, they contain no crossings $\poscross$ or $\negcross$.
    \item Let $C_1,\dotsc,C_r$ be the connected components, ordered so that the lowest univalent vertex in $C_i$ is less than the lowest univalent vertex in $C_j$ for $i<j$.  Then, for $i<j$, all edges in $C_i$ pass under all edges in $C_j$.
\end{itemize}
For example,
\[
    \begin{tikzpicture}[anchorbase]
        \draw (1,0) node[anchor=north] {$\scriptstyle 12$} -- (1.3,0.5) -- (1,1) node[anchor=south] {$\scriptstyle 1$};
        \draw (1.3,0.5) -- (2,0.5);
        \draw (2.5,0) node[anchor=north] {$\scriptstyle 9$} -- (2,0.5) -- (2,1) node[anchor=south] {$\scriptstyle 3$};
        \draw[wipe] (2,0) -- (2.5,0.5) -- (1.5,1);
        \draw (2,0) node[anchor=north] {$\scriptstyle 10$} -- (2.5,0.5) -- (1.5,1) node[anchor=south] {$\scriptstyle 2$};
        \draw[wipe] (2.5,0.5) -- (3,1);
        \draw (2.5,0.5) -- (3,1) node[anchor=south] {$\scriptstyle 5$};
        \draw[wipe] (3.5,0) -- (2.5,1);
        \draw (3.5,0) node[anchor=north] {$\scriptstyle 7$} -- (2.5,1) node[anchor=south] {$\scriptstyle 4$};
        \draw[wipe] (3,0) to[out=up,in=up,looseness=1.5] (4,0);
        \draw (3,0) node[anchor=north] {$\scriptstyle 8$} to[out=up,in=up,looseness=1.5] (4,0) node[anchor=north] {$\scriptstyle 6$};
        \draw[wipe] (0.5,0.5) -- (1.5,0);
        \draw (0.5,0.5) -- (1.5,0) node[anchor=north] {$\scriptstyle 11$};
        \draw (0,0) node[anchor=north] {$\scriptstyle 14$} -- (0.5,0.5) -- (0.5,0) node[anchor=north] {$\scriptstyle 13$};
    \end{tikzpicture}
\]
is a normally-stacked component-planar diagram, where we have indicated the numbering of the univalent vertices.

\begin{conj} \label{wish}
    For $m,n \in \N$, the normally-stacked component-planar graphs whose cycles are all of length at least six span $\Fcat_q(\go^{\otimes m}, \go^{\otimes n})$.
\end{conj}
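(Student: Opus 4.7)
The plan is to prove the spanning claim by a diagrammatic rewriting algorithm: starting from an arbitrary composition of the generators in \cref{lego}, rewrite it using the defining relations of $\Fcat_q$ into a linear combination of normally-stacked component-planar diagrams whose internal cycles all have length at least six. Since every morphism in $\Fcat_q(\go^{\otimes m},\go^{\otimes n})$ is a linear combination of compositions and tensor products of generators, it suffices to exhibit such an expression for each basic diagram and to argue that the rewriting procedure terminates while respecting the three structural conditions in the definition of ``normally-stacked component-planar''.

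First I would reduce an arbitrary diagram to a sum of diagrams whose connected components are individually planar. The skein relation \cref{symskein} trades a positive crossing for a negative crossing plus four planar terms; iterating this, together with the dot--crossing identities \cref{qnuke,radioactive,nuclear} and the rotation/braid moves of \cref{vortex,venom,turvy,pokey,wobbly}, one can ``push'' all the crossings of a single component to the outside of that component and then either absorb them into inter-component crossings or eliminate them via Reidemeister moves. Next, to achieve normal stacking, I would use the braid relations in \cref{venom} together with the isotopy afforded by the strict pivotal structure to slide entire components past one another. After ordering the components $C_1,\dots,C_r$ by the clockwise index of their lowest univalent vertex, the requirement that every edge of $C_i$ pass under every edge of $C_j$ for $i<j$ can be enforced by conjugating each $C_j$ by an appropriate product of overcrossings, introducing only inter-component crossings that do not break component-planarity.

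The heart of the argument is the elimination of cycles of length three, four, and five inside each planar component. The three tools are \cref{F4triangle} (triangle $\to$ trivalent vertex), \cref{qsquare} (square $\to$ planar terms of lower ``face complexity''), and \cref{qpent} (pentagon $\to$ diagrams built from smaller faces and hexagonal/crossed terms). The subtlety is that each of these relations introduces diagrams which may themselves contain small cycles, so a well-founded complexity measure is required. I would assign to a planar diagram the lexicographic tuple $(f_3, f_4, f_5, V)$, where $f_i$ counts interior $i$-gonal faces and $V$ counts trivalent vertices, and check by direct inspection of \cref{F4triangle,qsquare,qpent} that every term on the right-hand side strictly decreases this tuple. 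Verifying this termination for \cref{qpent}, where the right-hand side contains a visually complicated mix of crossings, squares and triangles, is the main technical hurdle, and may require first reprocessing each such term by the earlier two relations before the pentagon count drops.

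The last step is to interleave the three reductions: planarisation of components, normal stacking, and small-cycle elimination. Because a cycle elimination can reintroduce crossings (for example, the last five terms of \cref{qpent}), the procedure must loop back to the planarisation step, and termination of the outer loop requires a coarser lexicographic measure, for instance $(c,d,f)$ with $c$ the total number of crossings, $d$ a measure of stacking disorder (say, the number of component-pairs violating the stacking order), and $f$ the small-face complexity above. I expect the confluence and termination analysis of this rewriting system to be the main obstacle; an appealing alternative route would be to use the full functor $\bF_q$ of \cref{lightning} together with \cref{business} to compute $\dim_{\Q(q)}\Fcat_q(\go^{\otimes m},\go^{\otimes n})$ exactly, turning the spanning conjecture into the assertion that the candidate set has the correct cardinality on both sides, a combinatorial enumeration problem reminiscent of the one solved in type $G_2$ in \cite{Kup96}.
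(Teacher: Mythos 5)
This statement is labeled \cref{wish} as a \emph{conjecture} in the paper, not a theorem: the authors explicitly present it as an open problem and note that it is the quantum analogue of \cite[Conj.~5.9]{GSZ21}, which is likewise unproved. There is therefore no proof in the paper to compare your argument against, and the correct response to the prompt is to observe that the claim is conjectural.

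Your proposal is a sensible sketch of how one might attack it, and it does identify the right ingredients (planarisation via \cref{symskein} and \cref{qnuke}, stacking via the braid relations and pivotal isotopy, elimination of short cycles via \cref{F4triangle,qsquare,qpent}), but it is not a proof. You acknowledge the central obstruction yourself: the relations \cref{qsquare,qpent} reintroduce crossings and small faces on their right-hand sides, so any rewriting argument requires a well-founded complexity measure together with a confluence analysis, and you explicitly say this ``may require'' further work and is ``the main obstacle''. Until that termination argument is actually carried out (or until a dimension count along the lines you suggest is completed and matched against an enumeration of the candidate diagrams), the claim remains open. In short, what you have is a plausible research plan, not a proof, which is consistent with the fact that the paper itself only asserts this as a conjecture.
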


\Cref{wish} is a quantum analogue of \cite[Conj.~5.9]{GSZ21}, which conjectures a spanning set in the degenerate (that is, $q=1$) setting.

\section{Idempotents in the 2-point endomorphism algebra\label{sec:idempotents}}

It follows from \cref{westboro} that every finite-dimensional $U_q$-module of type $1$ is the image of $\bF_q(e)$ for some idempotent endomorphism $e$ in $\Fcat_q$.  In this section we illustrate this by giving an explicit decomposition of $\sV_q^{\otimes 2}$.

Any decomposition $1_\go^{\otimes n} = \sum_{i=1}^m e_i$ of the identity of $\Fcat_q(\go^{\otimes n}, \go^{\otimes n})$ as a sum of orthogonal idempotents induces a decomposition
\[
    1_{\sV_q^{\otimes n}} = \sum_{i=1}^m \bF_q(e_i)
\]
of the identity endomorphism of $\sV_q^{\otimes n}$ as a sum of orthogonal idempotents.  This, in turn, yields a direct sum decomposition
\[
    \sV_q^{\otimes n} = \bigoplus_{i=1}^m \bF_q(e_i) \sV_q^{\otimes n}.
\]
The idempotent $\bF_q(e_i)$ is the identity map on the summand $\bF_q(e_i) \sV_q^{\otimes n}$, and so we have
\[
    \tr_q \left( \bF_q(e_i) \right) = \qdim \left( \bF_q(e_i) \sV_q^{\otimes n} \right).
\]
The following lemma, whose proof is a generalization of an argument found in the proof of \cref{groundwork}, implies that we can compute this quantum dimension by taking the trace, as in \cref{natal}, of the diagrams occurring in $e_i$.

\begin{lem} \label{lounge}
    For an endomorphism $f \in \Fcat_q(\go^{\otimes n},\go^{\otimes n})$, we have
    \begin{equation} \label{chaise}
        \bF_q \left(
            \begin{tikzpicture}[centerzero]
                \draw[line width=2] (0,0.2) arc(180:0:0.3) -- (0.6,-0.2) arc(360:180:0.3);
                \filldraw[fill=white,draw=black] (-0.25,0.2) rectangle (0.25,-0.2);
                \node at (0,0) {$\scriptstyle{f}$};
            \end{tikzpicture}
        \right)
        = \tr_q(\bF_q(f)),
    \end{equation}
    where the thick strand represents $1_\go^{\otimes n}$ and, on the right-hand side, we are identifying $a \in \Q(q)$ with the element of $\End_{U_q}(L_q(0))$ given by multiplication by $a$.
\end{lem}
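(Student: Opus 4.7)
The plan is to prove \cref{chaise} directly by evaluating both sides, handling $n = 1$ by hand and then iterating for general $n$.

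For $n = 1$, the closure on the left-hand side of \cref{chaise} is, up to the pivotal structure, the composition $\capmor \circ (f \otimes 1_{\go}) \circ \cupmor$, which $\bF_q$ sends to $\sD_q \circ (\bF_q(f) \otimes 1_{\sV_q}) \circ \sC_q$. Applied to $1 \in \Q(q)$ and using $\sC_q(1) = \sum_{v \in \bB_\sV} v \otimes v^\vee$, this becomes
\[
\sum_{v \in \bB_\sV} \sD_q\bigl(\bF_q(f)(v) \otimes v^\vee\bigr)
\overset{\cref{trolloc}}{=}
\sum_{v \in \bB_\sV} \sD_q\bigl(v^\vee \otimes K_{-2\rho}\, \bF_q(f)(v)\bigr).
\]
By the first identity in \cref{snail}, each summand $\sD_q(v^\vee \otimes w)$ is the coefficient of $v$ in the expansion of $w \in \sV_q$ in the basis $\bB_\sV$; hence the total sum is the ordinary trace of $K_{-2\rho} \circ \bF_q(f)$, which equals $\tr_q(\bF_q(f))$ by definition.

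For general $n$, I would proceed by induction, exploiting the strict pivotal structure of $\Fcat_q$ established via \cref{vortex,topsy,turvy}. The closure of $f$ over its $n$ strands decomposes by first closing off the outermost (rightmost) strand, which under $\bF_q$ corresponds to the partial quantum trace of $\bF_q(f)$ over its last tensor factor (by the $n=1$ computation applied in that factor). This produces an endomorphism of $\sV_q^{\otimes(n-1)}$ whose closure, by the inductive hypothesis, evaluates to the quantum trace of that partial trace. Since $K_{-2\rho}$ is group-like, it acts on $\sV_q^{\otimes n}$ as $K_{-2\rho}^{\otimes n}$, and the composition of partial quantum traces equals the full quantum trace on $\sV_q^{\otimes n}$, giving the desired identity.

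The main obstacle is the careful bookkeeping of the nested cups and caps in the closure, and in particular verifying that their images under $\bF_q$ compose correctly with the evaluation and coevaluation data on $\Mcat_q$. This is precisely the assertion that $\bF_q$ is a pivotal functor, which is encoded in the compatibility conditions already verified for $\sC_q$ and $\sD_q$ in the proof of \cref{groundwork} (especially the normalization scaling $\sD_\bullet$ by $\phi'/\phi$). Once this is in hand, \cref{chaise} is the standard statement that the categorical right trace in a strict pivotal category is sent by a pivotal functor to the quantum trace in $\Mcat_q$.
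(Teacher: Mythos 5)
Your $n=1$ computation is correct and essentially coincides with the paper's argument specialized to $n=1$: apply $\sC_q$, apply $\sD_q$, slide the diagonal using \cref{trolloc}, and read off the ordinary trace of $K_{-2\rho}\circ \bF_q(f)$ via \cref{snail}. Where you diverge is in handling general $n$. The paper does not induct: it introduces the single map $\sD_q^{(n)}\colon \sV_q^{\otimes 2n}\to \Q(q)$, given by the $n$-fold nested pairing $v_1\otimes\cdots\otimes v_n\otimes w_n\otimes\cdots\otimes w_1 \mapsto \prod_i \sD_q(v_i\otimes w_i)$, and runs the same three-step computation once on this map, using \cref{trolloc} factor-by-factor to produce $K_{-2\rho}^{\otimes n}$ and then \cref{HopfK} together with the $U_q$-linearity of $\bF_q(f)$ to identify the result with $\tr(\bF_q(f)\circ K_{-2\rho})$. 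Your inductive route---close one strand, recognize a partial quantum trace, then appeal to the inductive hypothesis together with the fact that composing partial quantum traces (with the group-like $K_{-2\rho}$) gives the full quantum trace---is mathematically sound, but it offloads work onto the assertion that partial quantum traces compose correctly, which you state but do not prove. The paper's one-shot argument avoids that extra lemma and keeps everything self-contained; your version is conceptually closer to how one would picture the diagram but is somewhat less economical. One minor caveat: in the induction you describe closing ``the outermost (rightmost) strand,'' which is internally inconsistent---with the closure arcs on the right, the rightmost strand of $f$ (the last tensor factor) is capped off by the \emph{innermost} arc, and that is the one you must peel off first for the induction to make sense as stated. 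This is a terminological slip rather than a gap, but it should be fixed for clarity.
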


\begin{proof}
    Define
    \[
        \sD_q^{(n)} \colon \sV_q^{\otimes 2n} \to \Q(q),\qquad
        v_1 \otimes \dotsb \otimes v_n \otimes w_n \otimes \dotsb \otimes w_1
        \mapsto \sD_q(v_1 \otimes w_1) \dotsm \sD_q(v_n \otimes w_n).
    \]
    Then the left-hand side of \cref{chaise} is the map
    \begin{align*}
        1 &\mapsto \sum_{v_1,\dotsc,v_n \in \bB_\sV} \sD_q^{(n)} \left( \bF_q(f)(v_1 \otimes \dotsb \otimes v_n) \otimes v_n^\vee \otimes \dotsb \otimes v_1^\vee \right)
        \\
        &\overset{\mathclap{\cref{trolloc}}}{=}\
        \sum_{v_1,\dotsc,v_n \in \bB_\sV} \sD_q^{(n)} \left( v_n^\vee \otimes \dotsb \otimes v_1^\vee \otimes K_{-2\rho}^{\otimes n} \bF_q(f)(v_1 \otimes \dotsb \otimes v_n) \right)
        \\
        &\overset{\mathclap{\cref{HopfK}}}{=}\
        \sum_{v_1,\dotsc,v_n \in \bB_\sV} \sD_q^{(n)} \left( v_n^\vee \otimes \dotsb \otimes v_1^\vee \otimes (\bF_q(f) \circ K_{-2\rho})(v_1 \otimes \dotsb \otimes v_n) \right)
        \\
        &= \tr_q(\bF_q(f)).
        \qedhere
    \end{align*}
\end{proof}

It follows from the decomposition \cref{lizard} that the identity morphism of $\sV_q^{\otimes 2}$ is a sum of five orthogonal idempotents, corresponding to projection onto the five summands on the right-hand side of \cref{lizard}.  We can describe these idempotents explicitly.  Let
\begin{gather*}
    e_0 = \frac{1}{\delta}\, \hourglass,\qquad
    e_{\omega_1} = \frac{q^{-4}[4]}{[3]^2[8]}\, \jail
        + \frac{(q^{10}-q^4-q^2)[4][6][9]}{[3]^3[8][18]}\, \hourglass
        + \frac{q^{-1}[4]^2}{[3]^2[8]} \Hmor
        + \frac{q^2[4]^2}{[2][3]^2[8]} \Imor
        - \frac{[4]}{[3][8]} \poscross,
    \\
    e_{\omega_3} = \frac{q^{-1}[4]}{[3]^2}\, \jail
        + \frac{q^8[4]}{[3]^2[8]}\, \hourglass
        - \frac{q^2[4]}{[2][3]^2} \Hmor
        + \frac{(q^8-q^4-1)[4][6]}{[2][3]^2[12]} \Imor
        - \frac{1}{[3]} \poscross,\qquad
    e_{\omega_4} = \frac{1}{\phi}\, \Imor,
    \\
    e_{2\omega_4} = \frac{(q^6+q^2+q^{-4})[4]}{[3][8]}\, \jail
        - \frac{(q^{18}+q^{14}+q^{12}+q^8-1)[4]}{[3][8][13]}\, \hourglass
        + \frac{(q^4-q^2)[4]^2}{[2][3][8]} \Hmor
        \\ \qquad \qquad
        - \frac{(q^{10}+q^4+1)[4]^2}{[2][3][7][8]} \Imor
        + \frac{[4][6]}{[2][3][8]} \poscross.
\end{gather*}

\begin{prop}
    For $\lambda \in \{0,\omega_1,\omega_3,\omega_4,2\omega_4\}$, the morphism $\bF_q(e_\lambda) \in \End_{U_q}(\sV_q^{\otimes 2})$ corresponds to projection onto the summand $L_q(\lambda)$ in the decomposition \cref{lizard}.
\end{prop}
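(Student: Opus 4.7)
The strategy proceeds in three stages: establish that the $e_\lambda$ form a complete orthogonal system of idempotents in $\Fcat_q(\go^{\otimes 2},\go^{\otimes 2})$, transport this via $\bF_q$, and then use quantum traces to identify each summand.

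First, I would verify by direct computation in $\Fcat_q$ that
\[
    e_\lambda \cdot e_\mu = \delta_{\lambda,\mu}\, e_\lambda \quad \text{for all } \lambda,\mu \in \{0,\omega_1,\omega_3,\omega_4,2\omega_4\}, \qquad \sum_\lambda e_\lambda = \jail\ .
\]
Each product $e_\lambda \cdot e_\mu$ expands into twenty-five vertical compositions of pairs drawn from $\{\jail,\hourglass,\Hmor,\Imor,\poscross\}$. The resulting diagrams contain loops, bubbles, triangles, and squares, each of which reduces back to the basis using the relations already available: the loop and bubble values from \cref{checkers}, the triangle relation \cref{F4triangle}, the square relation \cref{qsquare}, the skein relation \cref{symskein}, and the crossing-with-dot identity \cref{qnuke} (together with its $\Rot$-image). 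After rewriting, each of the identities above reduces to a system of five equalities between rational functions in $q$, which can be verified mechanically in SageMath using the definitions of $\delta$ and $\phi$ in \cref{tree}, as already done for analogous computations throughout the paper.

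Applying $\bF_q$ (\cref{lightning}) then yields five orthogonal idempotents $\bF_q(e_\lambda) \in \End_{U_q}(\sV_q^{\otimes 2})$ summing to the identity. By \cref{lizard}, this endomorphism algebra is five-dimensional and commutative, with exactly five primitive idempotents, one per simple summand. Any idempotent is a sum of a subset of these primitives; together with orthogonality and the partition-of-unity condition, this forces each $\bF_q(e_\lambda)$ to be either zero or a single primitive. To pin down which summand each corresponds to---and in particular to rule out the zero case---I would invoke \cref{lounge}: closing off $e_\lambda$ by cups and caps on both sides produces a scalar in $\Q(q)$ equal to $\tr_q\bigl(\bF_q(e_\lambda)\bigr) = \qdim\bigl(\bF_q(e_\lambda)\sV_q^{\otimes 2}\bigr)$. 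Evaluating these five closed diagrams using the $\Fcat_q$ relations, and comparing with $\qdim L_q(\mu)$ for $\mu \in \{0,\omega_1,\omega_3,\omega_4,2\omega_4\}$ as computed via the quantum Weyl dimension formula \cref{qWeyl}, will produce the matching $\tr_q\bigl(\bF_q(e_\lambda)\bigr) = \qdim L_q(\lambda)$. Since the five quantum dimensions are distinct and nonzero, each $\bF_q(e_\lambda)$ is thereby identified as the projector onto the corresponding summand $L_q(\lambda)$.

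The main obstacle is purely computational: verifying the twenty-five idempotency and orthogonality relations requires expanding and simplifying linear combinations whose coefficients are rational functions in $q$ of moderate complexity, and similarly for the five closed-diagram traces. No new structural input is required beyond the relations derived in \cref{sec:Tdef,sec:2point,sec:specialize} and the decomposition \cref{lizard}; the arithmetic is routine but voluminous, and is best delegated to SageMath as in the other explicit coefficient calculations in the paper.
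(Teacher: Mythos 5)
Your proposal is correct and takes essentially the same route as the paper: verify by SageMath that the $e_\lambda$ are orthogonal idempotents summing to $1_\go^{\otimes 2}$ in $\Fcat_q$, push forward through $\bF_q$, close off the diagrams and invoke \cref{lounge} to compute $\tr_q(\bF_q(e_\lambda))$, and match against $\qdim L_q(\lambda)$ via \cref{qWeyl}. The paper states the reduction ``it suffices to match quantum traces'' somewhat tersely; your elaboration that the partition-of-unity forces each $\bF_q(e_\lambda)$ to be a nonempty union of primitives, hence a single primitive once all five traces are nonzero, together with distinctness of the five quantum dimensions to determine which primitive, is exactly the justification that makes that step rigorous.
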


\begin{proof}
    Direct computation using SageMath shows that, in $\Fcat_q(\go^{\otimes 2}, \go^{\otimes 2})$,
    \[
        1_\go^{\otimes 2} = e_0 + e_{\omega_1} + e_{\omega_3} + e_{\omega_4} + e_{2 \omega_4}
    \]
    is a decomposition of $1_\go^{\otimes 2}$ as a sum of orthogonal idempotents.  Thus
    \[
        1_{\sV_q^{\otimes 2}} = \bF_q(e_0) + \bF_q(e_{\omega_1}) + \bF_q(e_{\omega_3}) + \bF_q(e_{\omega_4}) + \bF_q(e_{2 \omega_4})
    \]
    is a decomposition into orthogonal idempotents.  It thus suffices to prove that the quantum trace of $\bF_q(e_\lambda)$ is the quantum dimension of $L_q(\lambda)$ for $\lambda \in \{0,\omega_1,\omega_3,\omega_4,2\omega_4\}$.  To do this, we first compute
    \[
        \begin{tikzpicture}[centerzero]
            \draw (-0.2,0) arc(180:-180:0.2);
            \draw (-0.4,0) arc(180:-180:0.4);
        \end{tikzpicture}
        = \delta^2,
        \qquad
        \begin{tikzpicture}[centerzero]
            \draw (-0.1,0.2) arc(180:360:0.1) arc(180:0:0.1) -- (0.3,-0.2) arc(360:180:0.1) arc(0:180:0.1) arc(180:360:0.3) -- (0.5,0.2) arc(0:180:0.3);
        \end{tikzpicture}
        = \delta,
        \qquad
        \begin{tikzpicture}[centerzero]
            \draw (-0.15,0) -- (0.15,0) arc(180:-180:0.15);
            \draw (-0.15,0) arc(180:-180:0.45);
        \end{tikzpicture}
        = 0,
        \qquad
        \begin{tikzpicture}[centerzero]
            \draw (0,-0.1) -- (0,0.1) -- (0.1,0.2) to[out=45,in=90] (0.4,0) to[out=-90,in=-45] (0.1,-0.2) -- (0,-0.1);
            \draw (0,0.1) -- (-0.1,0.2) to[out=135,in=90,looseness=2] (0.6,0) to[out=-90,in=-135,looseness=2] (-0.1,-0.2) -- (0,-0.1);
        \end{tikzpicture}
        = \delta \phi,
        \qquad
        \begin{tikzpicture}[centerzero]
            \draw (0.4,0.3) arc(90:-90:0.3) to[out=left,in=right] (-0.4,0.3) arc(90:270:0.3);
            \draw[wipe] (-0.4,-0.3) to[out=right,in=left] (0.4,0.3);
            \draw (-0.4,-0.3) to[out=right,in=left] (0.4,0.3);
        \end{tikzpicture}
        = q^{-24} \delta.
    \]
    Then, using \cref{lounge}, direct computation using SageMath verifies that
    \[
        \tr_q \left( \bF_q(e_\lambda) \right)
        = \qdim L_q(\lambda)
        \qquad \text{for } \lambda \in \{0,\omega_1,\omega_3,\omega_4,2\omega_4\},
    \]
    as desired.
\end{proof}

\section{Alternative presentations}

In our treatment of the category $\Fcat_q$, we have made some choices.  We have essentially given preference to the morphisms
\[
    \jail\ ,\qquad \hourglass\ ,\qquad \Hmor\ ,\qquad \Imor\ ,\qquad \poscross\ .
\]
For example, the relation \cref{qsquare} expresses the square in terms of these five morphisms.  However, for some purposes, it is preferable to replace $\poscross$ by the rotationally invariant morphism $\poscross + \negcross$.  We did this, for example, in \cref{expanse}.  Then one can verify with SageMath that relation \cref{qsquare} becomes
\begin{equation}
    \begin{multlined}
        2 \sqmor
        = \frac{q^8 + q^4 + 2q^2 + 2 + 2q^{-2} + q^{-4} + q^{-8}}{(q^2+q^{-2})^2} \left(\, \jail\, +\, \hourglass \right)
        \\
        + (q^2 - 1 + q^{-2})(q^2 + 3 + q^{-2}) \left( \Imor + \Hmor \right)
        - \frac{[2][3][6]}{[4]^2} \left( \poscross + \negcross \right).
    \end{multlined}
\end{equation}

Yet another approach to the presentation of $\Fcat_q$ would be to eliminate crossings entirely, opting to use the square instead.  In particular, we can use \cref{qsquare}, and its rotation, to eliminate all crossings.  For example, we can replace the pentagon relation \cref{qpent} by the following result giving the pentagon in terms of simpler planar diagrams.

\begin{lem}
    We have
    \begin{equation} \label{planarpent}
        \begin{multlined}
            \pentmor =
            \frac{1}{[2]^2} \left(
                \begin{tikzpicture}[anchorbase]
                    \draw (-0.2,0) -- (0,0.25) -- (0.2,0);
                    \draw (0,0.25) -- (0,0.4);
                    \draw (-0.2,-0.25) -- (-0.2,0) -- (-0.3,0.4);
                    \draw (0.2,-0.25) -- (0.2,0) -- (0.3,0.4);
                \end{tikzpicture}
                +
                \begin{tikzpicture}[anchorbase]
                    \draw (-0.3,0.3) -- (0,0) -- (0.3,0.3);
                    \draw (0,0.3) -- (-0.15,0.15);
                    \draw (0,0) -- (0,-0.15) -- (-0.15,-0.3);
                    \draw (0,-0.15) -- (0.15,-0.3);
                \end{tikzpicture}
                +
                \begin{tikzpicture}[anchorbase]
                    \draw (0.3,0.3) -- (0,0) -- (-0.3,0.3);
                    \draw (0,0.3) -- (0.15,0.15);
                    \draw (0,0) -- (0,-0.15) -- (0.15,-0.3);
                    \draw (0,-0.15) -- (-0.15,-0.3);
                \end{tikzpicture}
                +
                \begin{tikzpicture}[anchorbase]
                    \draw (-0.3,-0.3) -- (0.3,0.3);
                    \draw (-0.3,0.3) -- (-0.15,-0.15);
                    \draw (0,0.3) -- (0.15,0.15);
                    \draw (0,0) -- (0.3,-0.3);
                \end{tikzpicture}
                +
                \begin{tikzpicture}[anchorbase]
                    \draw (0.3,-0.3) -- (-0.3,0.3);
                    \draw (0.3,0.3) -- (0.15,-0.15);
                    \draw (0,0.3) -- (-0.15,0.15);
                    \draw (0,0) -- (-0.3,-0.3);
                \end{tikzpicture}
            \right)
            + \frac{[2][3]^2}{[4]^2[6]}
            \left(
                \begin{tikzpicture}[centerzero]
                    \draw (-0.15,-0.3) -- (-0.15,-0.23) arc(180:0:0.15) -- (0.15,-0.3);
                    \draw (-0.3,0.3) -- (0,0.08) -- (0.3,0.3);
                    \draw (0,0.3) -- (0,0.08);
                \end{tikzpicture}
                +
                \begin{tikzpicture}[centerzero]
                    \draw (-0.2,-0.3) -- (-0.2,0.3);
                    \draw (0,0.3) -- (0.15,0) -- (0.3,0.3);
                    \draw (0.15,0) -- (0.15,-0.3);
                \end{tikzpicture}
                +
                \begin{tikzpicture}[centerzero]
                    \draw (0.2,-0.3) -- (0.2,0.3);
                    \draw (0,0.3) -- (-0.15,0) -- (-0.3,0.3);
                    \draw (-0.15,0) -- (-0.15,-0.3);
                \end{tikzpicture}
                +
                \begin{tikzpicture}[centerzero]
                    \draw (-0.3,0.3) -- (-0.3,0.23) arc(180:360:0.15) -- (0,0.3);
                    \draw (0.3,0.3) -- (0.15,0) -- (-0.2,-0.3);
                    \draw (0.2,-0.3) -- (0.15,0);
                \end{tikzpicture}
                +
                \begin{tikzpicture}[centerzero]
                    \draw (0.3,0.3) -- (0.3,0.23) arc(360:180:0.15) -- (0,0.3);
                    \draw (-0.3,0.3) -- (-0.15,0) -- (0.2,-0.3);
                    \draw (-0.2,-0.3) -- (-0.15,0);
                \end{tikzpicture}
            \right)
            \\
            - \frac{[3]}{[2][6]}
            \left(
                \begin{tikzpicture}[centerzero]
                    \draw (-0.15,-0.15) -- (-0.15,0.15) -- (0.15,0.15) -- (0.15,-0.15) -- cycle;
                    \draw (-0.15,0.15) -- (-0.15,0.25) -- (0,0.35);
                    \draw (-0.15,0.25) -- (-0.3,0.35);
                    \draw (0.15,0.15) -- (0.3,0.35);
                    \draw (-0.15,-0.15) -- (-0.2,-0.3);
                    \draw (0.15,-0.15) -- (0.2,-0.3);
                \end{tikzpicture}
                +
                \begin{tikzpicture}[centerzero]
                    \draw (-0.15,-0.15) -- (-0.15,0.15) -- (0.15,0.15) -- (0.15,-0.15) -- cycle;
                    \draw (0.15,0.15) -- (0.15,0.25) -- (0,0.35);
                    \draw (0.15,0.25) -- (0.3,0.35);
                    \draw (-0.15,0.15) -- (-0.3,0.35);
                    \draw (-0.15,-0.15) -- (-0.2,-0.3);
                    \draw (0.15,-0.15) -- (0.2,-0.3);
                \end{tikzpicture}
                +
                \begin{tikzpicture}[centerzero]
                    \draw (0,0.15) --  (0.15,0) -- (0,-0.15) -- (-0.15,0) -- cycle;
                    \draw (-0.15,0) -- (-0.2,0.3);
                    \draw (0,0.15) -- (0,0.3);
                    \draw (0.15,0) -- (0.25,0) -- (0.25,0.3);
                    \draw (0.25,0) -- (0.25,-0.3);
                    \draw (0,-0.15) -- (0,-0.3);
                \end{tikzpicture}
                +
                \begin{tikzpicture}[anchorbase]
                    \draw (0,0.15) --  (0.15,0) -- (0,-0.15) -- (-0.15,0) -- cycle;
                    \draw (0,-0.15) -- (0,-0.25) -- (0,-0.25) -- (-0.15,-0.4);
                    \draw (0,-0.25) -- (0.15,-0.4);
                    \draw (-0.15,0) -- (-0.3,0.25);
                    \draw (0,0.15) -- (0,0.25);
                    \draw (0.15,0) -- (0.3,0.25);
                \end{tikzpicture}
                +
                \begin{tikzpicture}[centerzero]
                    \draw (0,0.15) --  (-0.15,0) -- (0,-0.15) -- (0.15,0) -- cycle;
                    \draw (0.15,0) -- (0.2,0.3);
                    \draw (0,0.15) -- (0,0.3);
                    \draw (-0.15,0) -- (-0.25,0) -- (-0.25,0.3);
                    \draw (-0.25,0) -- (-0.25,-0.3);
                    \draw (0,-0.15) -- (0,-0.3);
                \end{tikzpicture}
            \right).
        \end{multlined}
    \end{equation}
\end{lem}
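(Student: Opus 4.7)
The plan is to derive \cref{planarpent} from the crossing version \cref{qpent} by eliminating the five crossings appearing in its last sum, using \cref{qsquare} as the basic conversion tool. Since the coefficient of $\poscross$ in \cref{qsquare} is $-[2][3][6]/[4]^2$, which is nonzero, we may solve for the crossing to obtain an identity of the schematic form
\[
    \poscross \ =\ -\frac{[4]^2}{[2][3][6]}\ \sqmor \ +\ (\text{linear combination of } \jail,\ \hourglass,\ \Hmor,\ \Imor).
\]
I would then apply this substitution locally inside each of the five crossing-containing diagrams in the third sum of \cref{qpent}. Each such diagram $X_i$ differs from a corresponding square diagram $S_i$ appearing in \cref{planarpent} only by replacing an internal $\sqmor$ patch by a $\poscross$ patch; the remaining external structure (caps, trivalent vertices) is planar. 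Hence the substitution rewrites each $X_i$ as a scalar multiple of $S_i$ plus a linear combination of diagrams obtained by replacing the crossing locally by $\jail$, $\hourglass$, $\Hmor$, or $\Imor$.

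Next I would simplify the resulting planar summands using the relations already proved: \cref{checkers} to collapse unknots, curls, and bigons; \cref{F4triangle} to collapse triangles; and \cref{qsquare} to rewrite any residual squares created by the substitution. Each such simplification produces diagrams of the three shapes appearing on the right-hand side of \cref{planarpent} (the rotationally symmetric orbit of the "tree-like" five-vertex diagrams, the orbit of the diagrams with a horizontal bar joining two external strands, and the orbit of the five square diagrams $S_i$). The pentagon enjoys a manifest five-fold rotational symmetry (using $\Rot$), which both \cref{qpent} and \cref{planarpent} respect, so it suffices to track the transformation on one representative per orbit and let rotation handle the rest.

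Finally, I would collect the coefficients and compare with those in \cref{planarpent}. As is the style of the paper, the resulting rational-function identities in $[n] = (q^n - q^{-n})/(q - q^{-1})$ — for instance, the claim that the coefficient $-[7]/[4]^2$ in \cref{qpent} combines with the planar contributions from the crossing substitution to produce exactly $[2][3]^2/([4]^2[6])$, and that $[3]^2/[4]^2$ combines with the scaling $-[4]^2/([2][3][6])$ to produce exactly $-[3]/([2][6])$ — would be verified by SageMath. The main obstacle is the careful combinatorial identification of how each local substitution inside a given $X_i$ interacts with its surrounding cups, caps, and trivalent vertices (i.e., recognizing which of the planar target diagrams in \cref{planarpent} each contribution lands on); once this correspondence is made explicit, the coefficient matching reduces to a finite list of elementary identities in $\Q(q)$.
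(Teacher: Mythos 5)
Your proposal matches the paper's proof in all essentials: solve \cref{qsquare} for $\poscross$ as a linear combination of $\jail$, $\hourglass$, $\Hmor$, $\Imor$, and $\sqmor$; substitute this locally into one representative crossing diagram from the last parenthesis of \cref{qpent}; sum over rotations to recover all five; and match coefficients via SageMath. One small simplification you overlooked: the paper never needs \cref{checkers} or \cref{F4triangle} after the substitution, because the local replacements of $\poscross$ by $\jail$, $\hourglass$, $\Hmor$, $\Imor$, $\sqmor$ inside the crossing diagram already land \emph{exactly} on five of the fifteen target shapes (two of the cap-and-fork diagrams, two of the tree diagrams, and one square), without any curls, bigons, or triangles ever being created. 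So the ``careful combinatorial identification'' you correctly flag as the crux of the argument is in fact quite clean, and the rest is pure coefficient arithmetic.
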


\begin{proof}
    Using \cref{qsquare}, we have
    \[
        \poscross = p_1\, \jail + p_2\, \hourglass + p_3\, \Hmor + p_4\, \Imor + p_5\, \sqmor,
    \]
    where
    \begin{gather*}
        p_1 = \frac{(q^{10} + q^8 + q^6 + q^4 + 1)[4]^2}{q^4(q^4+1)^2[2][3][6]},\quad
        p_2 = \frac{q^2(q^{10} + q^6 + q^4 + q^2 + 1)[4]^2}{(q^4+1)^2[2][3][6]},\\
        p_3 = \frac{(q^{10} + 2q^6 + q^2 + 1)[4]^2}{(q^8 + q^4)[2][3][6]},\quad
        p_4 = \frac{(q^{10} + q^8 + 2q^4 + 1)[4]^2}{(q^6 + q^2)[2][3][6]},\quad
        p_5 = - \frac{[4]^2}{[2][3][6]}.
    \end{gather*}
    Thus,
    \[
        \begin{tikzpicture}[centerzero]
            \draw (0,0.3) -- (0,-0.15) -- (-0.15,-0.3);
            \draw (0,-0.15) -- (0.15,-0.3);
            \draw[wipe] (-0.2,0.3) -- (-0.2,0.25) arc(180:360:0.2) -- (0.2,0.3);
            \draw (-0.2,0.3) -- (-0.2,0.25) arc(180:360:0.2) -- (0.2,0.3);
        \end{tikzpicture}
        = p_1
        \begin{tikzpicture}[centerzero]
            \draw (-0.3,0.3) -- (-0.3,0.23) arc(180:360:0.15) -- (0,0.3);
            \draw (0.3,0.3) -- (0.15,0) -- (-0.2,-0.3);
            \draw (0.2,-0.3) -- (0.15,0);
        \end{tikzpicture}
        + p_2
        \begin{tikzpicture}[centerzero]
            \draw (0.3,0.3) -- (0.3,0.23) arc(360:180:0.15) -- (0,0.3);
            \draw (-0.3,0.3) -- (-0.15,0) -- (0.2,-0.3);
            \draw (-0.2,-0.3) -- (-0.15,0);
        \end{tikzpicture}
        + p_3
        \begin{tikzpicture}[anchorbase]
            \draw (-0.3,0.3) -- (0,0) -- (0.3,0.3);
            \draw (0,0.3) -- (-0.15,0.15);
            \draw (0,0) -- (0,-0.15) -- (-0.15,-0.3);
            \draw (0,-0.15) -- (0.15,-0.3);
        \end{tikzpicture}
        + p_4
        \begin{tikzpicture}[anchorbase]
            \draw (0.3,0.3) -- (0,0) -- (-0.3,0.3);
            \draw (0,0.3) -- (0.15,0.15);
            \draw (0,0) -- (0,-0.15) -- (0.15,-0.3);
            \draw (0,-0.15) -- (-0.15,-0.3);
        \end{tikzpicture}
        + p_5 \sqmor.
    \]
    Summing over all rotations gives
    \begin{multline*}
        \begin{tikzpicture}[centerzero]
            \draw (0,0.3) -- (0,-0.15) -- (-0.15,-0.3);
            \draw (0,-0.15) -- (0.15,-0.3);
            \draw[wipe] (-0.2,0.3) -- (-0.2,0.25) arc(180:360:0.2) -- (0.2,0.3);
            \draw (-0.2,0.3) -- (-0.2,0.25) arc(180:360:0.2) -- (0.2,0.3);
        \end{tikzpicture}
        +
        \begin{tikzpicture}[centerzero]
            \draw (-0.3,0.3) -- (0,0) -- (0.3,0.3);
            \draw (0,0) -- (-0.15,-0.3);
            \draw[wipe] (0,0.3) to[out=-45,in=70] (0.15,-0.3);
            \draw (0,0.3) to[out=-45,in=70] (0.15,-0.3);
        \end{tikzpicture}
        +
        \begin{tikzpicture}[centerzero]
            \draw (0.3,0.3) -- (0,0) -- (-0.3,0.3);
            \draw (0,0) -- (0.15,-0.3);
            \draw[wipe] (0,0.3) to[out=225,in=110] (-0.15,-0.3);
            \draw (0,0.3) to[out=225,in=110] (-0.15,-0.3);
        \end{tikzpicture}
        +
        \begin{tikzpicture}[centerzero]
            \draw (-0.3,0.3) -- (-0.15,0.15) -- (0,0.3);
            \draw (-0.15,0.15) -- (0.15,-0.3);
            \draw[wipe] (0.3,0.3) -- (-0.15,-0.3);
            \draw (0.3,0.3) -- (-0.15,-0.3);
        \end{tikzpicture}
        +
        \begin{tikzpicture}[centerzero]
            \draw (0.3,0.3) -- (0.15,0.15) -- (0,0.3);
            \draw (0.15,0.15) -- (-0.15,-0.3);
            \draw[wipe] (-0.3,0.3) -- (0.15,-0.3);
            \draw (-0.3,0.3) -- (0.15,-0.3);
        \end{tikzpicture}
        =
        (p_3 + p_4)
        \left(
            \begin{tikzpicture}[anchorbase]
                \draw (-0.2,0) -- (0,0.25) -- (0.2,0);
                \draw (0,0.25) -- (0,0.4);
                \draw (-0.2,-0.25) -- (-0.2,0) -- (-0.3,0.4);
                \draw (0.2,-0.25) -- (0.2,0) -- (0.3,0.4);
            \end{tikzpicture}
            +
            \begin{tikzpicture}[anchorbase]
                \draw (-0.3,0.3) -- (0,0) -- (0.3,0.3);
                \draw (0,0.3) -- (-0.15,0.15);
                \draw (0,0) -- (0,-0.15) -- (-0.15,-0.3);
                \draw (0,-0.15) -- (0.15,-0.3);
            \end{tikzpicture}
            +
            \begin{tikzpicture}[anchorbase]
                \draw (0.3,0.3) -- (0,0) -- (-0.3,0.3);
                \draw (0,0.3) -- (0.15,0.15);
                \draw (0,0) -- (0,-0.15) -- (0.15,-0.3);
                \draw (0,-0.15) -- (-0.15,-0.3);
            \end{tikzpicture}
            +
            \begin{tikzpicture}[anchorbase]
                \draw (-0.3,-0.3) -- (0.3,0.3);
                \draw (-0.3,0.3) -- (-0.15,-0.15);
                \draw (0,0.3) -- (0.15,0.15);
                \draw (0,0) -- (0.3,-0.3);
            \end{tikzpicture}
            +
            \begin{tikzpicture}[anchorbase]
                \draw (0.3,-0.3) -- (-0.3,0.3);
                \draw (0.3,0.3) -- (0.15,-0.15);
                \draw (0,0.3) -- (-0.15,0.15);
                \draw (0,0) -- (-0.3,-0.3);
            \end{tikzpicture}
        \right)
        \\
        + (p_1 + p_2)
        \left(
            \begin{tikzpicture}[centerzero]
                \draw (-0.15,-0.3) -- (-0.15,-0.23) arc(180:0:0.15) -- (0.15,-0.3);
                \draw (-0.3,0.3) -- (0,0.08) -- (0.3,0.3);
                \draw (0,0.3) -- (0,0.08);
            \end{tikzpicture}
            +
            \begin{tikzpicture}[centerzero]
                \draw (-0.2,-0.3) -- (-0.2,0.3);
                \draw (0,0.3) -- (0.15,0) -- (0.3,0.3);
                \draw (0.15,0) -- (0.15,-0.3);
            \end{tikzpicture}
            +
            \begin{tikzpicture}[centerzero]
                \draw (0.2,-0.3) -- (0.2,0.3);
                \draw (0,0.3) -- (-0.15,0) -- (-0.3,0.3);
                \draw (-0.15,0) -- (-0.15,-0.3);
            \end{tikzpicture}
            +
            \begin{tikzpicture}[centerzero]
                \draw (-0.3,0.3) -- (-0.3,0.23) arc(180:360:0.15) -- (0,0.3);
                \draw (0.3,0.3) -- (0.15,0) -- (-0.2,-0.3);
                \draw (0.2,-0.3) -- (0.15,0);
            \end{tikzpicture}
            +
            \begin{tikzpicture}[centerzero]
                \draw (0.3,0.3) -- (0.3,0.23) arc(360:180:0.15) -- (0,0.3);
                \draw (-0.3,0.3) -- (-0.15,0) -- (0.2,-0.3);
                \draw (-0.2,-0.3) -- (-0.15,0);
            \end{tikzpicture}
        \right)
        + p_5
        \left(
            \begin{tikzpicture}[centerzero]
                \draw (-0.15,-0.15) -- (-0.15,0.15) -- (0.15,0.15) -- (0.15,-0.15) -- cycle;
                \draw (-0.15,0.15) -- (-0.15,0.25) -- (0,0.35);
                \draw (-0.15,0.25) -- (-0.3,0.35);
                \draw (0.15,0.15) -- (0.3,0.35);
                \draw (-0.15,-0.15) -- (-0.2,-0.3);
                \draw (0.15,-0.15) -- (0.2,-0.3);
            \end{tikzpicture}
            +
            \begin{tikzpicture}[centerzero]
                \draw (-0.15,-0.15) -- (-0.15,0.15) -- (0.15,0.15) -- (0.15,-0.15) -- cycle;
                \draw (0.15,0.15) -- (0.15,0.25) -- (0,0.35);
                \draw (0.15,0.25) -- (0.3,0.35);
                \draw (-0.15,0.15) -- (-0.3,0.35);
                \draw (-0.15,-0.15) -- (-0.2,-0.3);
                \draw (0.15,-0.15) -- (0.2,-0.3);
            \end{tikzpicture}
            +
            \begin{tikzpicture}[centerzero]
                \draw (0,0.15) --  (0.15,0) -- (0,-0.15) -- (-0.15,0) -- cycle;
                \draw (-0.15,0) -- (-0.2,0.3);
                \draw (0,0.15) -- (0,0.3);
                \draw (0.15,0) -- (0.25,0) -- (0.25,0.3);
                \draw (0.25,0) -- (0.25,-0.3);
                \draw (0,-0.15) -- (0,-0.3);
            \end{tikzpicture}
            +
            \begin{tikzpicture}[anchorbase]
                \draw (0,0.15) --  (0.15,0) -- (0,-0.15) -- (-0.15,0) -- cycle;
                \draw (0,-0.15) -- (0,-0.25) -- (0,-0.25) -- (-0.15,-0.4);
                \draw (0,-0.25) -- (0.15,-0.4);
                \draw (-0.15,0) -- (-0.3,0.25);
                \draw (0,0.15) -- (0,0.25);
                \draw (0.15,0) -- (0.3,0.25);
            \end{tikzpicture}
            +
            \begin{tikzpicture}[centerzero]
                \draw (0,0.15) --  (-0.15,0) -- (0,-0.15) -- (0.15,0) -- cycle;
                \draw (0.15,0) -- (0.2,0.3);
                \draw (0,0.15) -- (0,0.3);
                \draw (-0.15,0) -- (-0.25,0) -- (-0.25,0.3);
                \draw (-0.25,0) -- (-0.25,-0.3);
                \draw (0,-0.15) -- (0,-0.3);
            \end{tikzpicture}
        \right).
    \end{multline*}
    Direct computation using SageMath verifies that
    \[
        - 1 + \frac{[3]^2}{[4]^2} (p_3 + p_4 ) = \frac{1}{[2]^2},\quad
        - \frac{[7]}{[4]^2} + \frac{[3]^2}{[4]^2} (p_1+p_2) = \frac{[2][3]^2}{[4]^2[6]},\quad
        \frac{[3]^2}{[4]^2} p_5 = - \frac{[3]}{[2][6]}.
    \]
    Therefore, \cref{planarpent} follows from \cref{qpent}.
\end{proof}

In this approach, we see that $\Fcat_q$ is a \emph{trivalent category} in the sense of \cite{MPS17}.  All computations could then be done with planar graphs.  For example, one can show by direct computation using SageMath that the idempotents of \cref{sec:idempotents} involving crossings can be expressed instead as
\begin{align*}
    e_{\omega_1}
    &= - \frac{[4]^2}{[3]^2[6][8]}\ \jail
    - \frac{(q^4-q^2+1-q^{-2}+q^{-4})[2]^2[4]^2[9]}{[3]^3[8][18]}\, \hourglass
    + \frac{[4]^2}{[2][3]^3[8]}\, \Hmor
    \\ &\qquad \qquad
    - \frac{[4]^3}{[2]^2[3]^3[8]}\, \Imor
    + \frac{[4]^3}{[2][3]^2[6][8]}\, \sqmor,
    \\
    e_{\omega_3}
    &= \frac{[2][7]}{[3]^2[6]}\ \jail
    - \frac{[2][4][7]}{[3]^2[6][8]}\, \hourglass
    - \frac{[4]^2}{[3]^3}\, \Hmor
    - \frac{([9]+[5]-1)[4][6]}{[2][3]^3[12]}\, \Imor
    + \frac{[4]^2}{[2][3]^2[6]}\, \sqmor,
    \\
    e_{2\omega_4}
    &= \frac{[4]^2[7]}{[2][3]^2[8]}\ \jail
    + \frac{(q^4-q^2+1-q^{-2}+q^{-4})[2][4]^2[7]}{[3]^2[8][13]}\, \hourglass
    + \frac{(q^6+q^4+1+q^{-4}+q^{-6})[4]^2}{[2][3]^2[8]}\, \Hmor
    \\ &\qquad \qquad
    + \frac{([9]+[5]-1)[4]^3}{[2]^2[3]^2[7][8]}\, \Imor
    - \frac{[4]^3}{[2]^2[3]^2[8]}\, \sqmor.
\end{align*}
Note that this approach has the advantage that all coefficients are invariant under the map $q \mapsto q^{-1}$.  When showing directly (that is, without using the crossings) that the above elements are idempotents, one uses the relation
\begin{multline} \label{ladder}
    \begin{tikzpicture}[anchorbase]
        \draw (-0.15,-0.5) -- (-0.15,0.5);
        \draw (0.15,-0.5) -- (0.15,0.5);
        \draw (-0.15,0.25) -- (0.15,0.25);
        \draw (-0.15,0) -- (0.15,0);
        \draw (-0.15,-0.25) -- (0.15,-0.25);
    \end{tikzpicture}
    =
    -\frac{[2]^2[7]}{[4]^2}\ \jail\
    + \frac{[2]^3[7][10]}{[4]^2[5]}\ \hourglass\
    + \frac{[2]^2(q^8+q^4+2q^2+1+2q^{-2}+q^{-4}+q^{-8}}{[4]^2}\ \Hmor
    \\
    - \frac{([3]-2)([9]+[5]-1)}{[3]-1}\ \Imor
    + ([5]-2)\ \sqmor,
\end{multline}
which we computed using SageMath.

\section{Affinization\label{sec:affine}}

As a final application of the results of the current paper, we introduce an affine version of the diagrammatic category $\Fcat_q$ and define its action on the category of $U_q$-modules.  This affine category is an $F_4$ analogue of the affine oriented skein category of \cite[\S 4]{Bru17} in type $A$ and the affine Kauffman skein category of \cite{GRS20} in types $BCD$.  To the best of our knowledge, affine categories have not yet appeared in exceptional type.  Our definition uses the general affinization procedure described in \cite{MS21}.  As noted in the introduction, this procedure requires that we work in the quantum setting.  Our treatment is brief, leaving further investigation to future work.

\begin{defin}
    Let $\AFcat_q$ be the strict $\kk$-linear monoidal category obtained from $\Fcat_q$ by adjoining morphisms
    \[
        \posstrand,\ \negstrand
        \colon \go \to \go
    \]
    subject to the relations
    \begin{equation} \label{affrel}
        \begin{tikzpicture}[centerzero]
            \draw (-0.3,-0.3) -- (0.3,0.3);
            \draw[wipe] (0.3,-0.3) -- (-0.3,0.3);
            \draw (0.3,-0.3) -- (-0.3,0.3);
            \posaff{-0.18,-0.18};
        \end{tikzpicture}
        =
        \begin{tikzpicture}[centerzero]
            \draw (0.3,-0.3) -- (-0.3,0.3);
            \draw[wipe] (-0.3,-0.3) -- (0.3,0.3);
            \draw (-0.3,-0.3) -- (0.3,0.3);
            \posaff{0.15,0.15};
        \end{tikzpicture}
        \ ,\qquad
        \begin{tikzpicture}[centerzero]
            \draw (-0.3,-0.3) -- (0,0) -- (0.3,-0.3);
            \draw (0,0) -- (0,0.42);
            \posaff{0,0.21};
        \end{tikzpicture}
        =
        \begin{tikzpicture}[centerzero]
            \draw (-0.3,-0.3) -- (0,0) -- (0.3,-0.3);
            \draw (0,0) -- (0,0.42);
            \posaff{-0.16,-0.16};
            \posaff{0.16,-0.16};
        \end{tikzpicture}
        \ ,\qquad
        \begin{tikzpicture}[centerzero]
            \draw (0,-0.3) -- (0,0.3);
            \posaff{0,0.12};
            \negaff{0,-0.12};
        \end{tikzpicture}
        =
        \begin{tikzpicture}[centerzero]
            \draw (0,-0.3) -- (0,0.3);
            \negaff{0,0.12};
            \posaff{0,-0.12};
        \end{tikzpicture}
        =
        \begin{tikzpicture}[centerzero]
            \draw (0,-0.3) -- (0,0.3);
        \end{tikzpicture}
        \ ,\qquad
        \begin{tikzpicture}[centerzero]
            \draw (-0.2,-0.2) -- (-0.2,0) arc(180:0:0.2) -- (0.2,-0.2);
            \posaff{-0.2,0};
        \end{tikzpicture}
        =
        \begin{tikzpicture}[centerzero]
            \draw (-0.2,-0.2) -- (-0.2,0) arc(180:0:0.2) -- (0.2,-0.2);
            \negaff{0.2,0};
        \end{tikzpicture}
        \ .
    \end{equation}
\end{defin}

\begin{prop}
    The following relations hold in $\AFcat_q$:
    \begin{gather} \label{jaguar}
        \begin{tikzpicture}[centerzero]
            \draw (0.3,-0.3) -- (-0.3,0.3);
            \draw[wipe] (-0.3,-0.3) -- (0.3,0.3);
            \draw (-0.3,-0.3) -- (0.3,0.3);
            \posaff{0.18,-0.18};
        \end{tikzpicture}
        =
        \begin{tikzpicture}[centerzero]
            \draw (-0.3,-0.3) -- (0.3,0.3);
            \draw[wipe] (0.3,-0.3) -- (-0.3,0.3);
            \draw (0.3,-0.3) -- (-0.3,0.3);
            \posaff{-0.15,0.15};
        \end{tikzpicture}
        \ ,\quad
        \begin{tikzpicture}[centerzero]
            \draw (0.3,-0.3) -- (-0.3,0.3);
            \draw[wipe] (-0.3,-0.3) -- (0.3,0.3);
            \draw (-0.3,-0.3) -- (0.3,0.3);
            \negaff{-0.15,-0.15};
        \end{tikzpicture}
        =
        \begin{tikzpicture}[centerzero]
            \draw (-0.3,-0.3) -- (0.3,0.3);
            \draw[wipe] (0.3,-0.3) -- (-0.3,0.3);
            \draw (0.3,-0.3) -- (-0.3,0.3);
            \negaff{0.18,0.18};
        \end{tikzpicture}
        \ ,\quad
        \begin{tikzpicture}[centerzero]
            \draw (-0.3,-0.3) -- (0.3,0.3);
            \draw[wipe] (0.3,-0.3) -- (-0.3,0.3);
            \draw (0.3,-0.3) -- (-0.3,0.3);
            \negaff{0.15,-0.15};
        \end{tikzpicture}
        =
        \begin{tikzpicture}[centerzero]
            \draw (0.3,-0.3) -- (-0.3,0.3);
            \draw[wipe] (-0.3,-0.3) -- (0.3,0.3);
            \draw (-0.3,-0.3) -- (0.3,0.3);
            \negaff{-0.18,0.18};
        \end{tikzpicture}
        \ ,\quad
        \\ \label{wolverine}
        \begin{tikzpicture}[centerzero]
            \draw (-0.2,-0.2) -- (-0.2,0) arc(180:0:0.2) -- (0.2,-0.2);
            \negaff{-0.2,0};
        \end{tikzpicture}
        =
        \begin{tikzpicture}[centerzero]
            \draw (-0.2,-0.2) -- (-0.2,0) arc(180:0:0.2) -- (0.2,-0.2);
            \posaff{0.2,0};
        \end{tikzpicture}
        \ ,\quad
        \begin{tikzpicture}[centerzero]
            \draw (-0.2,0.2) -- (-0.2,0) arc(180:360:0.2) -- (0.2,0.2);
            \posaff{-0.2,0};
        \end{tikzpicture}
        =
        \begin{tikzpicture}[centerzero]
            \draw (-0.2,0.2) -- (-0.2,0) arc(180:360:0.2) -- (0.2,0.2);
            \negaff{0.2,0};
        \end{tikzpicture}
        \ ,\quad
        \begin{tikzpicture}[centerzero]
            \draw (-0.2,0.2) -- (-0.2,0) arc(180:360:0.2) -- (0.2,0.2);
            \negaff{-0.2,0};
        \end{tikzpicture}
        =
        \begin{tikzpicture}[centerzero]
            \draw (-0.2,0.2) -- (-0.2,0) arc(180:360:0.2) -- (0.2,0.2);
            \posaff{0.2,0};
        \end{tikzpicture}
        \ ,
        \\ \label{moose}
        \begin{tikzpicture}[centerzero]
            \draw (-0.3,-0.3) -- (0,0) -- (0.3,-0.3);
            \draw (0,0) -- (0,0.42);
            \negaff{0,0.21};
        \end{tikzpicture}
        =
        \begin{tikzpicture}[centerzero]
            \draw (-0.3,-0.3) -- (0,0) -- (0.3,-0.3);
            \draw (0,0) -- (0,0.42);
            \negaff{-0.16,-0.16};
            \negaff{0.16,-0.16};
        \end{tikzpicture}
        \ ,\quad
        \begin{tikzpicture}[centerzero]
            \draw (-0.3,-0.3) -- (0,0) -- (0.3,-0.3);
            \draw (0,0) -- (0,0.42);
            \posaff{-0.16,-0.16};
        \end{tikzpicture}
        =
        \begin{tikzpicture}[centerzero]
            \draw (-0.3,-0.3) -- (0,0) -- (0.3,-0.3);
            \draw (0,0) -- (0,0.42);
            \posaff{0,0.21};
            \negaff{0.16,-0.16};
        \end{tikzpicture}
        \ ,\quad
        \begin{tikzpicture}[centerzero]
            \draw (-0.3,-0.3) -- (0,0) -- (0.3,-0.3);
            \draw (0,0) -- (0,0.42);
            \negaff{-0.16,-0.16};
        \end{tikzpicture}
        =
        \begin{tikzpicture}[centerzero]
            \draw (-0.3,-0.3) -- (0,0) -- (0.3,-0.3);
            \draw (0,0) -- (0,0.42);
            \negaff{0,0.21};
            \posaff{0.16,-0.16};
        \end{tikzpicture}
        \ ,\quad
        \begin{tikzpicture}[centerzero]
            \draw (-0.3,-0.3) -- (0,0) -- (0.3,-0.3);
            \draw (0,0) -- (0,0.42);
            \posaff{0.16,-0.16};
        \end{tikzpicture}
        =
        \begin{tikzpicture}[centerzero]
            \draw (-0.3,-0.3) -- (0,0) -- (0.3,-0.3);
            \draw (0,0) -- (0,0.42);
            \posaff{0,0.21};
            \negaff{-0.16,-0.16};
        \end{tikzpicture}
        \ ,\quad
        \begin{tikzpicture}[centerzero]
            \draw (-0.3,-0.3) -- (0,0) -- (0.3,-0.3);
            \draw (0,0) -- (0,0.42);
            \negaff{0.16,-0.16};
        \end{tikzpicture}
        =
        \begin{tikzpicture}[centerzero]
            \draw (-0.3,-0.3) -- (0,0) -- (0.3,-0.3);
            \draw (0,0) -- (0,0.42);
            \negaff{0,0.21};
            \posaff{-0.16,-0.16};
        \end{tikzpicture}
        \ .
    \end{gather}
\end{prop}

\begin{proof}
    The first relation in \cref{jaguar} follows from the first relation in \cref{affrel} by composing on the top with $\poscross$ and on the bottom with $\negcross$.  The second relation in \cref{jaguar} then follows from the first relation in \cref{jaguar} by composing on top and bottom of both strands with $\negstrand$.  The third relation in \cref{jaguar} follows from the second relation in \cref{jaguar} by composing on the top with $\negcross$ and on the bottom with $\poscross$.

    The first relation in \cref{wolverine} follows from the last relation in \cref{affrel} by composing on the bottom with $\negstrand\ \posstrand$.  Next we compute
    \[
        \begin{tikzpicture}[centerzero]
            \draw (-0.2,0.2) -- (-0.2,0) arc(180:360:0.2) -- (0.2,0.2);
            \posaff{-0.2,0};
        \end{tikzpicture}
        \overset{\cref{vortex}}{=}
        \begin{tikzpicture}[anchorbase]
            \draw (-1,0.3) -- (-1,0) arc(180:360:0.2) arc(180:0:0.2) arc(180:360:0.2) -- (0.2,0.3);
            \posaff{-0.2,0};
        \end{tikzpicture}
        =
        \begin{tikzpicture}[anchorbase]
            \draw (-1,0.3) -- (-1,0) arc(180:360:0.2) arc(180:0:0.2) arc(180:360:0.2) -- (0.2,0.3);
            \negaff{-0.6,0};
        \end{tikzpicture}
        =
        \begin{tikzpicture}[centerzero]
            \draw (-0.2,0.2) -- (-0.2,0) arc(180:360:0.2) -- (0.2,0.2);
            \negaff{0.2,0};
        \end{tikzpicture}.
    \]
    This proves the second relation in \cref{wolverine}.  Then composing on the top with $\negstrand\ \posstrand$ gives the third relation in \cref{wolverine}.

    The first relation in \cref{moose} follows from the second relation in \cref{affrel} by composing on the top with $\negstrand$ and on the bottom with $\negstrand\ \negstrand$.  The remaining relations in \cref{moose} follow similarly, by composing with appropriate combinations of $\negstrand$ and $\posstrand$.
\end{proof}

In the language of \cite{MS21}, $\AFcat_q = \operatorname{Aff}(\Fcat_q)$ is the \emph{affinization} of $\Fcat_q$; see \cite[Rem.~4.4]{MS21}.  Intuitively, it can be thought of as the category of $\Fcat_q$ string diagrams on the cylinder.  In this interpretation, we have
\[
    \begin{tikzpicture}[centerzero]
        \draw (0,-0.5) -- (0,0.5);
        \posaff{0,0};
        \draw (0.3,-0.5) -- (0.3,0.5);
        \draw (0.6,-0.5) -- (0.6,0.5);
        \draw (0.9,-0.5) -- (0.9,0.5);
        \draw (-0.3,-0.5) -- (-0.3,0.5);
        \draw (-0.6,-0.5) -- (-0.6,0.5);
        \draw (-0.9,-0.5) -- (-0.9,0.5);
        \draw (-1.2,-0.5) -- (-1.2,0.5);
    \end{tikzpicture}
    \ =\
    \begin{tikzpicture}[centerzero]
        \draw (0.3,-0.5) -- (0.3,0.5);
        \draw (0.6,-0.5) -- (0.6,0.5);
        \draw (0.9,-0.5) -- (0.9,0.5);
        \draw[wipe] (0,-0.5) to[out=up,in=200] (1.2,0);
        \draw (0,-0.5) to[out=up,in=200] (1.2,0);
        \draw (-1.5,0) to[out=20,in=down] (0,0.5);
        \draw[wipe] (-0.3,-0.5) -- (-0.3,0.5);
        \draw[wipe] (-0.6,-0.5) -- (-0.6,0.5);
        \draw[wipe] (-0.9,-0.5) -- (-0.9,0.5);
        \draw[wipe] (-1.2,-0.5) -- (-1.2,0.5);
        \draw (-0.3,-0.5) -- (-0.3,0.5);
        \draw (-0.6,-0.5) -- (-0.6,0.5);
        \draw (-0.9,-0.5) -- (-0.9,0.5);
        \draw (-1.2,-0.5) -- (-1.2,0.5);
        \draw[red,dashed] (-1.5,-0.5) -- (-1.5,0.5);
        \draw[red,dashed] (1.2,-0.5) -- (1.2,0.5);
    \end{tikzpicture}
    \ ,\qquad
    \begin{tikzpicture}[centerzero]
        \draw (0,-0.5) -- (0,0.5);
        \negaff{0,0};
        \draw (0.3,-0.5) -- (0.3,0.5);
        \draw (0.6,-0.5) -- (0.6,0.5);
        \draw (0.9,-0.5) -- (0.9,0.5);
        \draw (-0.3,-0.5) -- (-0.3,0.5);
        \draw (-0.6,-0.5) -- (-0.6,0.5);
        \draw (-0.9,-0.5) -- (-0.9,0.5);
        \draw (-1.2,-0.5) -- (-1.2,0.5);
    \end{tikzpicture}
    \ =\
    \begin{tikzpicture}[centerzero]
        \draw (0.3,-0.5) -- (0.3,0.5);
        \draw (0.6,-0.5) -- (0.6,0.5);
        \draw (0.9,-0.5) -- (0.9,0.5);
        \draw (0,-0.5) to[out=up,in=-20] (-1.5,0);
        \draw[wipe] (1.2,0) to[out=160,in=down] (0,0.5);
        \draw (1.2,0) to[out=160,in=down] (0,0.5);
        \draw[wipe] (-0.3,-0.5) -- (-0.3,0.5);
        \draw[wipe] (-0.6,-0.5) -- (-0.6,0.5);
        \draw[wipe] (-0.9,-0.5) -- (-0.9,0.5);
        \draw[wipe] (-1.2,-0.5) -- (-1.2,0.5);
        \draw (-0.3,-0.5) -- (-0.3,0.5);
        \draw (-0.6,-0.5) -- (-0.6,0.5);
        \draw (-0.9,-0.5) -- (-0.9,0.5);
        \draw (-1.2,-0.5) -- (-1.2,0.5);
        \draw[red,dashed] (-1.5,-0.5) -- (-1.5,0.5);
        \draw[red,dashed] (1.2,-0.5) -- (1.2,0.5);
    \end{tikzpicture}
    \ ,
\]
where, on the right-hand side of each equality, the dashed vertical lines are identified.

By \cite[Th.~4.3]{MS21}, the cup $\cupmor$ and cap $\capmor$ endow $\AFcat_q$ with the structure of a strict pivotal category.  Furthermore, by \cite[Cor.~3.4]{MS21}, the natural map $\Fcat_q \to \AFcat_q$, sending $\go$ to $\go$ and sending each of the generating morphisms \cref{lego} to the morphism in $\AFcat_q$ denoted by the same diagram, is faithful.

Recall that, for any category $\cN$, the category $\cEnd(\cN)$ of endofunctors and natural transformations of $\cN$ is a strict monoidal category, with tensor product given by composition of functors.  An \emph{action} of a strict monoidal category $\cC$ on $\cN$ is a monoidal functor $\bA \colon \cC \to \cEnd(\cN)$.  We adopt the notation $X \cdot N = \bA(X)(N)$ for $X \in \Ob(\cC)$ and $N \in \Ob(\cN)$.

The category $\Fcat_q$ acts on $\Mcat_q$ via the action
\begin{equation}
    X \cdot M = \bF_q(X) \otimes M,\qquad
    f \cdot g = \bF_q(f) \otimes g,
\end{equation}
for $X \in \Ob(\Fcat_q)$, $M \in \Ob(\Mcat_q)$, $f \in \Mor(\Fcat_q)$, and $g \in \Mor(\Mcat_q)$.  This action can be extended to an action of $\AFcat_q$ as follows.

\begin{prop}
    There is an action of $\AFcat_q$ on $\Mcat_q$ uniquely determined by
    \begin{gather*}
        X \cdot M = \bF_q(X) \otimes M,\qquad
        f \cdot g = \bF_q(f) \otimes g,
        \\
        \posstrand \cdot g = q^{-24} R_{\sV_q,N}^{-1} \circ (g \otimes 1_{\sV_q}) \circ R_{M,\sV_q}^{-1},\qquad
        \negstrand \cdot g = q^{24} R_{N,\sV_q} \circ (g \otimes 1_{\sV_q}) \circ R_{\sV_q,M},
    \end{gather*}
    for all $X \in \Ob(\Fcat_q)$, $f \in \Mor(\Fcat_q)$, $M,N \in \Ob(\Mcat_q)$, and $g \in \Mcat_q(M,N)$.
\end{prop}

\begin{proof}
    This follows from \cite[Th.~3.2]{MS21} after we note that the twist $\theta_\sV$ used there is the image under $\bF_q$ of
    \[
        \begin{tikzpicture}[centerzero]
        	\draw (0,0.6) to (0,0.3);
        	\draw (0.3,-0.2) to [out=0,in=-90](.5,0);
        	\draw (0.5,0) to [out=90,in=0](.3,0.2);
        	\draw (0,-0.3) to (0,-0.6);
        	\draw (0,0.3) to [out=-90,in=180] (.3,-0.2);
        	\draw[wipe] (0.3,.2) to [out=180,in=90](0,-0.3);
        	\draw (0.3,.2) to [out=180,in=90](0,-0.3);
        \end{tikzpicture}
        \ \overset{\cref{turvy}}{=} q^{-24}\
        \begin{tikzpicture}[centerzero]
            \draw (0,-0.6) -- (0,0.6);
        \end{tikzpicture}
        \ .
        \qedhere
    \]
\end{proof}

Just as the endomorphism algebras $\End_{\Fcat_q}(\go^{\otimes n})$ should be thought of as type $F_4$ analogues of Iwahori--Hecke algebras or BMW algebras, the affine endomorphism algebras $\End_{\AFcat_q}(\go^{\otimes n})$ should be thought of as type $F_4$ analogues of affine Hecke algebras or affine BMW algebras.  We leave a more detailed study, including connections to toroidal link invariants, cyclotomic quotients, and Jucys--Murphy elements, to future work.  We also note that one could, in a similar way, define an affine analogue of Kuperberg's $G_2$ category \cite{Kup94,Kup96}.

The commutative algebra $\End_{\AFcat_q}(\one)$ of closed diagrams in $\AFcat_q$ gives rise to elements in the center $Z(U_q)$ of $U_q$.  The algebra $\End_{\cEnd(\Mcat_q)}(\one) = \End(\Id_{U_q\md})$ of natural endomorphisms of the identity functor on $U_q$-mod can be naturally identified with the center $Z(U_q)$ of $U_q$.  More precisely, evaluation on the identity element of the regular representation defines a canonical algebra isomorphism $\rho \colon \End(\Id_{U_q\md}) \xrightarrow{\cong} Z(U_q)$.  We thus have a homomorphism of algebras
\[
    \rho \circ \bF_q \colon \End_{\AFcat_q}(\one) \to Z(U_q).
\]
In this way, every closed diagram in $\AFcat_q$ gives rise to an element of $Z(U_q)$.  For example, if we define, for $n \ge 0$,
\[
    \begin{tikzpicture}[centerzero]
        \draw (0,-0.2) -- (0,0.2);
        \multdot{0,0}{east}{n};
    \end{tikzpicture}
    = \left( \posstrand \right)^{\circ n}
    \qquad \text{and} \qquad
    \begin{tikzpicture}[centerzero]
        \draw (0,-0.2) -- (0,0.2);
        \multdot{0,0}{east}{-n};
    \end{tikzpicture}
    = \left( \negstrand \right)^{\circ n},
\]
then the image under $\rho^{-1} \circ \bF_q$ of the diagrams
\[
    \begin{tikzpicture}[centerzero]
        \draw (0,0) arc(0:360:0.2);
        \multdot{0,0}{west}{r};
    \end{tikzpicture}
    ,\quad
    \begin{tikzpicture}[centerzero]
        \draw (0,-0.2) to[out=up,in=down] (0.1,0) to[out=up,in=down] (0,0.2) arc(0:180:0.2) -- (-0.4,-0.2) arc(180:360:0.2);
        \draw (0,-0.2) to[out=up,in=down] (-0.1,0) to[out=up,in=down] (0,0.2);
        \opendot{0.1,0};
        \node at (0.25,0) {$\scriptstyle{s}$};
        \opendot{-0.1,0};
        \node at (-0.25,0) {$\scriptstyle{r}$};
    \end{tikzpicture}
    ,\quad
    \begin{tikzpicture}[centerzero]
        \draw (0,0.1) to[out=45,in=180] (0.15,0.2) arc (90:-90:0.2) to[out=180,in=-45] (0,-0.1) -- (0,0.1) to[out=135,in=0] (-0.15,0.2) arc (90:270:0.2) to[out=0,in=-135] (0,-0.1);
        \multdot{-0.35,0}{east}{r};
        \multdot{0.35,0}{west}{s};
    \end{tikzpicture}
    ,\qquad r,s \in \Z,
\]
are elements in $Z(U_q)$.  We leave for future work the question of giving an explicit description of $Z(U_q)$ using such diagrams.


\bibliographystyle{alphaurl}
\bibliography{qF4}

\end{document}